\def\DATE{November 26, 2016}
\newtheorem{theorem}{Theorem}
\newtheorem{lemma}[theorem]{Lemma}
\newtheorem{sublemma}[theorem]{Sublemma}
\newtheorem{proposition}[theorem]{Proposition}
\theoremstyle{definition}
\newtheorem{example}[theorem]{Example}
\newtheorem{remark}[theorem]{Remark}
\newtheorem{definition}[theorem]{Definition}
\def\velkyzvedak{{\rule{0pt}{.7em}}}
\def\KaPe{{KP}}
\long\def\comment#1\endcomment{\relax}
\def\envelope{completion}
\def\Lin{{\mbox {\it Lin\/}}}
\long\def\fatvert{\hbox{
\psscalebox{1 1} 
{
\begin{pspicture}(0,-.16)(0,0.098557666)
\psdots[linecolor=black, dotsize=0.34](0,0)
\end{pspicture}
}\hskip .3em}}
\def\choose{\atopwithdelims []}
\def\broucek{{brou\v cek}}
\def\oB{{\EuScript B}}
\def\squeezeddots{{\mbox{$\cdot \hskip -2pt \cdot \hskip -2pt \cdot$}}}
\def\podpera{{\rule{0em}{.56em}}}\def\mini{{\rule{0em}{.4em}}}
\def\oH{{\EuScript H}}
\def\ModHybFun{{\underline{\rm M}{\rm od}}}
\def\ModPreFun{{\underline{\rm Mo}{\rm d}}}
\def\CycHyb{\tt CycHyb}\def\ModHyb{\tt ModHyb}\def\PreHyb{\tt PreHyb}
\def\ColHyb{\tt ColHyb}
\def\nsoMod{\underline{\rm Mod}}
\def\korekce{{\rule{0pt}{.55em}}}
\def\Aut{{\rm Aut}}\def\oModgg{{\rm Mod}_{\rm gg}}
\def\gg{genus-graded}
\def\sfS{{\mathbb S}}
\def\zvedak{{\rule{0pt}{.6em}}}
\newcommand{\oMod}{\@ifnextchar*{\oModNoArg}{\oModArg}}
\def\oModNoArg*{\mathrm{Mod}}
\def\oModArg#1{\mathrm{Mod}\left( #1 \right)}
\def\CycOp{{\tt CycOp}}
\def\CycOpgg{{\tt CycOp_{\rm gg}}}
\def\nsCycOp{\underline{\tt C}{\tt y}\underline{\tt cO}{\tt p}}
\def\nsCycOpgg{\underline{\tt C}{\tt y}\underline{\tt cO}{\tt p}_{\rm gg}}
\def\ModOp{{\tt ModOp}}
\def\nsModOp{\underline{\tt ModO}{\tt p}}
\def\preModOp{{\tt pre}\underline {\tt ModO}{\tt p}}
\def\pre{{\rm pre}}
\newcommand{\sfO}{{\sf O}}
\newcommand{\sfo}{{\sf o}}
\newcommand{\sur}[2]{\@ifnextchar[{\surfacewithclosed{#1}{#2}}{\surfacewithoutclosed{#1}{#2}}}
\def\surfacewithclosed#1#2[#3]{ \left[ \substack{#1 \\ #2 \\ #3} \right] }
\def\surfacewithoutclosed#1#2{ \left[ \substack{#1 \\ #2} \right] }
\newcommand{\surbig}[2]{\@ifnextchar[{\surfacewithclosedb{#1}{#2}}{\surfacewithoutclosedb{#1}{#2}}}
\def\surfacewithclosedb#1#2[#3]{ \Big[ \substack{#1 \\ #2 \\ #3} \Big] }
\def\surfacewithoutclosedb#1#2{\Big[\substack{\raisebox{.2em}{\scriptsize$#1$} \\ #2} \Big] }
\newcommand{\ssur}[1]{\@ifnextchar[{\subsurface{#1}}{\onlydecomp{#1}}}
\def\onlydecomp#1{ \{#1\} }
\def\subsurface#1[#2]{ \substack{\{#1\} \\ #2} }
\newcommand{\oAss}{{\EuScript{A}\mathit{ss}}}
\newcommand{\oAssst}{{\EuScript{A}\mathit{ss}_{\rm st}}}
\newcommand{\nsoAss}{\underline{\EuScript{A}\mathit{ss}}}
\newcommand{\nsoAssst}{{\underline{\EuScript{A}\mathit{ss}}_{\rm st}}}
\newcommand{\oCom}{{\EuScript{C}\mathit{om}}}
\newcommand{\oComst}{{\EuScript{C}\mathit{om}_{\rm st}}}
\newcommand{\oQO}{{\EuScript{QO}}}
\newcommand{\oQOst}{{\EuScript{QO}_{\rm st}}}
\newcommand{\oQnsO}{{{\EuScript Q}\underline{\EuScript O}}}
\newcommand{\oQnsOst}{{\EuScript{Q}\underline{\EuScript O}_{\rm st}}}
\newcommand{\oQC}{{\EuScript{QC}}}
\newcommand{\oQCst}{{\EuScript{QC}_{\rm st}}}
\newcommand{\oQOC}{{\EuScript{QOC}}}
\newcommand{\oQOCst}{{\EuScript{QOC}_{\rm st}}}
\newcommand{\oQnsOC}{{\EuScript{Q}\underline{\EuScript{O}}\EuScript{C}}} 
\newcommand{\oQnsOCst}{{\EuScript{Q}\underline{\EuScript{O}}\EuScript{C}_{\rm st}}} 
\newcommand{\oOC}{{\EuScript{OC}}} 
\newcommand{\onsOC}{\underline{\EuScript{O}}{\EuScript{C}}} 
\newcommand{\onsOCst}{\underline{\EuScript{O}}{\EuScript{C}_{\rm st}}} 
\newcommand{\onsOCKP}{\underline{\EuScript{O}}{\EuScript{C}_{\rm KP}}} 
\newcommand{\oQnsOCKP}{\EuScript{Q}\underline{\EuScript{O}}{\EuScript{C}_{\rm KP}}} 
\newcommand{\onsOCpre}{\underline{\EuScript{O}}{\EuScript{C}^{\rm pre}}}
\newcommand{\oOCst}{{\EuScript{OC}_{\rm st}}} 
\newcommand{\oOMB}{\underline{\EuScript{O}}}
\newcommand{\KP}{\mathrm{KP}}
\newcommand{\ns}{non-$\Sigma$ }
\newcommand{\Ns}{Non-$\Sigma$ }
\def\MultCyc{{\tt MultCyc}}
\def\Set{{\tt Set}}\def\id{\hbox {$1 \!\! 1$}}
\def\({(}    \def\){)}
\def\and{\ \hbox { and } \ }
\def\({(\hskip -.15em(}    \def\){)\hskip -.15em)}
\def\Des{{\rm Des}}
\def\R#1{\boxed{\hbox{\scriptsize $#1$}}\,}
\def\Rrom#1{\boxed{\hbox{\scriptsize #1}}\,}
\def\@evenfoot{\rule{0pt}{20pt}[\DATE] \hfill [{\tt \jobname.tex}]}
\def\@oddfoot{\rule{0pt}{20pt}{[\tt \jobname.tex}]\hfill [\DATE]}
\def\rada#1#2{{#1,\ldots,#2}}
\def\nsM{{\underline{\EuScript M}}}
\def\Free{{{\mathbb M}}}
\def\Fin{{\tt Fin}}
\def\bfk{{\mathbb k}}
\def\Com{\mbox{{$\mathcal C$}\hskip -.3mm {\it om}}}
\def\Set{{\tt Set}}
\newcommand{\oP}{{\mathcal{P}}}
\newcommand{\oM}{{\EuScript M}}
\newcommand{\ooo}[2]{\sideset{_{#1}}{_{#2}}{\mathop{\circ}}}
\def\stt#1{{\{#1\}}}
\def\bbN{{\mathbb N}}\def\ot{\otimes}
\def\k-Mod{\hbox{\tt $\bfk$-Mod}}
\def\MultCyc{{\tt MultCyc}}
\def\Sym{{\rm Sym}}\def\Mod{{\rm Mod}}
\def\xxi{{\circ}}
\def\oP{{\mathcal P}}
\def\@evenfoot{\rule{0pt}{20pt}[\DATE] \hfill [{\tt \jobname.tex}]}
\def\@oddfoot{\rule{0pt}{20pt}{[\tt \jobname.tex}]\hfill [\DATE]}
\newcommand{\oxi}[1]{\xxi_{#1}}
\newcommand{\oC}{{\EuScript{C}}}
\newcommand{\cyc}[1]{\left(\hspace{-.6ex}\left( #1 \right)\hspace{-.6ex}\right)}	
\newcommand{\tF}{\tilde{F}}
\newcommand{\pF}{F'}
\newcommand{\ppF}{F''}
\newcommand{\cycs}[1]{\left(\hspace{-.3ex}\left( #1 \right)\hspace{-.3ex}\right)}
\newcommand{\St}{{\mathrm{st}}}
\def\End{\hbox{${\mathcal E}\hskip -.1em {\it nd}$}}
\newcommand{\Fr}{\@ifnextchar[{\FrSscipt}{\FrNoSscipt}}
\def\FrSscipt[#1]#2{\mathbb{F}_{\!\rm #1}(#2)}
\def\FrNoSscipt#1{\mathbb{F}(#1)}
\providecommand\@dotsep{5}
\def\listtodoname{List of Todos}
\def\listoftodos{\@starttoc{tdo}\listtodoname}
\title[Modular operads, Cardy condition and SFT]{Open-closed modular
  operads, Cardy condition 
\\ 
and string field theory}
\author[M.\ Doubek \& M.\ Markl]{Martin Doubek and Martin Markl}
\address{Mathematical Institute of the Academy, {\v Z}itn{\'a} 25,
         115 67 Prague 1, The Czech Republic}
\address{Faculty of Mathematics and Physics, Charles University,
186 75 Sokolovsk\'a 83, Prague~8, The Czech Republic}
\email{markl@math.cas.cz}
\thanks{The first author was supported by GA\v{C}R P201/13/27340P.
The second author was supported by the Eduard \v Cech Institute
  P201/12/G028 and RVO: 67985840.}
\begin{document}
\bibliographystyle{plain}

\parskip3pt plus 1pt minus .5pt
\baselineskip 17pt

\begin{abstract}
We prove that the modular operad of diffeomorphism
classes of Riemann surfaces with both `open' and `closed' boundary components,
in the sense of string field theory, is the modular \envelope\ of its
genus~$0$ part quotiented by the Cardy condition. We also provide a
finitary presentation of a version
of this modular two-colored operad and
characterize its algebras via morphisms of Frobenius algebras,
recovering some previously known results of Kaufmann, Penner and
others. 
As an important auxiliary tool we
characterize inclusions of cyclic operads that induce inclusions of
their modular \envelope{s}.    
\end{abstract}

\keywords{Modular \envelope, hybrid, open and closed strings,
  Frobenius algebra} 
\subjclass[2000]{18D50, 32G15, 18D35}

\renewcommand*{\thefootnote}{\fnsymbol{footnote}}
\setcounter{footnote}{0}

\maketitle

\begin{center}
  {\em Dedicated to the memory of Martin Doubek who died in a traffic
  accident
\\ 
on 29th of August~2016, in the middle of work on the present paper.\/}
\end{center}

\setcounter{tocdepth}{1}
\tableofcontents

\section*{Introduction}

\paragraph{\bf History of the subject}
Barton Zwiebach constructed in~\cite{zwiebach:NuclPh93} 
`string products' on the
Hilbert space of closed string field theory satisfying the `master
equation' which reflected the structure of the set $\oQC$ of diffeomorphism
classes of Riemann surfaces of arbitrary genera with labelled holes. As we
proved in \cite{markl:la}, the master equation expresses 
that the string
products form an algebra over the Feynman transform of $\oQC$. We
moreover proved that  $\oQC$ is the modular \envelope\footnote{Often
  called a modular {\em envelope\/} in recent literature. We take
  the liberty to keep our original terminology.} of its cyclic
suboperad $\oCom \subset \oQC$ 
consisting of Riemann surfaces of genus~$0$, that is
\begin{subequations}
\begin{equation}
\label{zitra_Hanka_u_Pakousu}
\oQC \cong \Mod(\oCom).
\end{equation}   

Later in~\cite{doubek:modass} we proved a similar statement for open
strings. Namely, we identified the modular operad $\oQO$ of
diffeomorphism classes of Riemann
surfaces with marked `open' boundaries with the  modular \envelope\ of
its genus zero part $\oAss$, i.e.\ we established an isomorphism
\[
\oQO \cong \Mod(\oAss).
\]
As a follow-up to~\cite{doubek:modass} we argued in~\cite{nsmod}
that $\oQO$ is the symmetrization of a more elementary object $\oQnsO$
bearing the structure of a non-$\Sigma$ modular operad. The previous
isomorphism then follows from a more elementary
\begin{equation}
\label{zitra_Hanka_u_Pakousu_a_Jarka}
\oQnsO \cong \nsoMod(\nsoAss),
\end{equation}
where $\nsoAss$ is the non-$\Sigma$ version of the associative cyclic
operad and $\nsoMod(-)$ the non-$\Sigma$ modular \envelope\ functor.
  
\paragraph{\bf Aims} 
We complete the story and establish analogs
of the results mentioned above
for the combined theory of open and closed
strings. The central object will be 
the set $\oQnsOC$ of diffeomorphism classes of
Riemann surfaces with both open and closed inputs. It behaves as a
non-$\Sigma$-modular operad in the open and as an ordinary modular
operad in the closed inputs; we call these structures {\em modular
  hybrids\/}. 
Contrary to expectations, it turns out that $\oQnsOC$ is \underline{not} the
modular \envelope\ of its genus~$0$ part $\onsOC$, but the quotient of
this \envelope\
by the {\em Cardy conditions\/} known to
physicists~\cite{cardy-lewellen,lauda} that is, symbolically,
\begin{equation}
\label{jsem_zavislak}
\oQnsOC \cong \ModHybFun(\onsOC)/\hbox{Cardy}.
\end{equation}
\end{subequations}
The unusual feature of the Cardy conditions is that they involve both the
open and closed interactions.
The above isomorphism restricted to closed resp.~open parts
gives~(\ref{zitra_Hanka_u_Pakousu})
resp.~(\ref{zitra_Hanka_u_Pakousu_a_Jarka}), so it is indeed the
culmination of the development described in the previous
paragraph. As a~bonus, we obtain a purely combinatorial proof of 
a~result of~\cite{kaufmann-penner:NP06} 
characterizing algebras over a version of $\oQnsOC$ in terms of
morphisms of Frobenius algebras.

\paragraph{\bf Why is the paper so unbearably long?} 
It is so because we establish three different versions
of~(\ref{jsem_zavislak}): 
the `ordinary,' stable and the
Kaufmann-Penner version, each having its own merit -- `ordinary'
version involves everything that makes sense,
stability
prevents the combinatorial explosion of the Feynman
transform~\cite[II.5.4]{markl-shnider-stasheff:book}, 
while the Kaufmann-Penner version\footnote{Abbreviated
``\KaPe'' at some places in the sequel.} 
admits a nice finitary presentation so its algebras can be described easily. 
In more detail, the cyclic hybrid $\onsOC$ contains the stable and
\KaPe\ subhybrids $\onsOCst$ and  $\onsOCKP$ such that
\[
\onsOC \supset\onsOCst\supset\onsOCKP.
\] 
Since, as demonstrated in
Example~\ref{Chtel_jsem_jet_na_bruslicky_ale_prsi.},  
the modular \envelope\ functor need not preserve inclusions, it
is not a priory clear whether
\[
\ModHybFun(\onsOC) \supset \ModHybFun(\onsOCst) \supset
\ModHybFun(\onsOCKP).
\] 
A substantial 
part of this paper is devoted to the proof that it is indeed the case,
therefore the stable and \KaPe\ cases can be treated as the
restricted versions of the ordinary one. 

\paragraph{\bf Our approach}
There are two approaches to the structures of (topological) string field theory.
The classical one of~\cite{Atiyah} interprets
surfaces as cobordisms, with the corresponding combinatorial
structure 
being that of a PROP. 
The second one does not discriminate between `outputs' and `inputs,'
and the relevant 
combinatorial structure is a~modular
operad. The difference on the algebra level is that, while in the first
approach the bilinear form on the underlying space forms a part of the
structure, in the second approach, adopted
e.g.\ in~\cite{kaufmann-penner:NP06} and this article, the bilinear form
is absorbed in the definition of the modular endomorphism operad.

\paragraph{\bf Main results}
The central technical result is
{\em Proposition~\ref{THMModOCus}\/}  which, 
together with {\em Propositions~\ref{THMModOCStable}\/}
and~{\em\ref{leze_na_mne_chripka}\/}, provides a 
combinatorial description of the
modular \envelope\ of the cyclic hybrid $\onsOC$ and its versions.
Our description enables one to interpret, in
Remark~\ref{za_10_dni_asi_naposledy_do_Sydney},  elements of
this completion as diffeomorphism classes of certain Riemann surfaces with
embedded~loops.

Using the above propositions we obtain 
the main results of this paper -- three versions
of the isomorphism~(\ref{jsem_zavislak}): the `ordinary' one 
in {\em Theorem~\ref{PROCUnivProp}\/}, stable one in
{\em Theorem~\ref{PROOCUnivPropST}\/} 
and the Kaufmann-Penner in {\em
  Theorem~\ref{THMOCUnivPropKP}\/}. 
An interesting byproduct 
is {\em Theorem~\ref{THMPresentationOfQOCKP}\/} describing  the \KaPe\ version  
$\oQnsOCKP$ of $\oQnsOC$ in terms
of generators and relations, together with a 
characterization of algebras over $\oQnsOCKP$ as couples
of Frobenius algebras connected by a morphism satisfying the Cardy
and centrality conditions
given in {\em Theorem~\ref{omylem_jsem_si_koupil_bio_banany}\/}. 
Variants of these results are known, 
see e.g.~\cite{kaufmann-penner:NP06,lauda}, but our approach provides a 
purely combinatorial~proof.   

It turns out that $\onsOC$  and its versions are more than
just cyclic hybrids as they admit a partial
modular hybrid structure - they are closed under contractions of open inputs
belonging to the same boundary component since this operation does not
change the geometric genus. We 
call structures of this
type {\em premodular hybrids\/} and denote $\onsOC$ with this extended
structure by $\onsOCpre$. 
In {\em Theorem~\ref{THMPositiveModEnvOC}\/} we prove that $\oQnsOC$ 
can be alternatively described as the modular \envelope\
of this premodular hybrid. Since the Cardy condition already 
lives in $\onsOCpre$,
no quotienting is necessary. Finally, 
{\em Proposition~\ref{THMSuffCondOnInjectivity}\/}
characterizing inclusions
of cyclic operads inducing inclusions of their modular
\envelope{s} is interesting in its own~right.

\begin{center}
{\bf Plan of the paper}
\end{center}

{\em Section~\ref{sec:participants-game}\/} begins with recalling the necessary
facts about cyclic and modular operads, and their
non-$\Sigma$ versions. We then introduce various versions of hybrids
as structures that combine ordinary and non-$\Sigma$
operads. This section also contains definitions of concrete operads
featuring in this article. We believe that Table~\ref{analogies} helps to
navigate through~them.

{\em Section~\ref{SSECModOCus}\/} is devoted to modular \envelope{s}
of cyclic hybrids and to their quotients by the Cardy condition. It
contains the main technical results of this paper,
Proposition~\ref{THMModOCus} and Theorems~\ref{PROCUnivProp}
and~\ref{THMPositiveModEnvOC}.   

{\em Section~\ref{Dnes_se_pojedu_podivat_do_Ribeauville}\/} 
has an auxiliary character. Its
Proposition~\ref{THMSuffCondOnInjectivity} describes inclusions $\oB
\hookrightarrow \oC$ of cyclic operads that induce inclusions $\Mod(\oB)
\hookrightarrow \Mod(\oC)$ of their   
modular \envelope{s}.

In {\em Section~\ref{sec:stable-kaufm-penn}\/}
we use the results of
Section~\ref{Dnes_se_pojedu_podivat_do_Ribeauville} to derive
the stable and
Kaufmann-Penner versions of the theorems in Section~\ref{SSECModOCus}.

Theorem~\ref{THMPresentationOfQOCKP} of 
{\em Section~\ref{dnes_prodlouzit_prukaz na_LAA}\/} provides
a finitary presentation of 
the \KaPe\ modular hybrid $\oQnsOCKP$. As its application 
we obtain a result of Kaufmann and Penner
describing its algebras in terms of morphisms of
Frobenius algebras.

\paragraph{\bf Acknowledgment} 
We are indebted to Ralph Kaufmann for
explaining to us what the Cardy condition is.

\noindent
{\bf Conventions.}   
We will assume working knowledge of operads and their versions.  Suitable
references are
monographs~\cite{loday-vallette,markl-shnider-stasheff:book} 
complemented
with~\cite{markl:handbook} and the original sources~\cite{getzler-kapranov:CPLNGT95,getzler-kapranov:CompM98,ginzburg-kapranov:DMJ94}. 
Modular \envelope{s} were introduced in~\cite{markl:la} 
and non-$\Sigma$ modular operads in~\cite{nsmod}. Sundry facts about
operads relevant for string field theory can be found
e.g.~in~\cite{braun,burke,chuang-lazarev:dual-fey,gaberdiel-zwiebach:NP97,harrelson,jurco-munster,kaufmann09,kaufmann-livernet-penner,kaufmann-penner:NP06,munster-sachs,saadi-zwiebach,zwiebach:NuclPh93}. 
Operads in this article will, with few exceptions, live in
the symmetric monoidal category $\Set$ of sets. We will denote by
$\id_X$ or simply by $\id$ when $X$ is understood, 
the identity automorphism of an object
$X$ (set, operad, vector space, \&c.).

We will denote by $\bbN_+$ the set $\{1,2,\ldots\}$ of positive
integers, by $\bbN$ the abelian semigroup $\{0,1,2,\ldots\}$ of
non-negative integers,  and by $\frac{1}{2}\bbN$ the semigroup $\{n/2\
|\ n\in\bbN\}$ of half-integers. By $\Set$ we denote the category of
sets, by $\Fin$ the category of 
finite sets; $|S| \in \bbN$ will denote the cardinality of $S \in
\Fin$.

Operads considered in this article may have `inputs' of two types --
open and closed. We will tend to use $O$ as the default notation for
open inputs, and $C$ for the closed ones. The
operations $\ooo uv$ in cyclic operads will be termed
`$\circ$-operations,' while the operations 
$\oxi {uv}$\footnote{In ancient times denoted $\xi_{uv}$. Notation due
to R.~Kaufmann.} 
in modular operads will be called `contractions.'

\section{Participants in the game}
\label{sec:participants-game}

\begin{table}
\def\arraystretch{1.3}
\begin{center}
  \begin{tabular}{|l|l|l|l|} 
\hline \multicolumn{1}{|c|}{\textbf{Symbol}} 
& \multicolumn{1}{c|}{\textbf{Name}}
& \multicolumn{1}{c|}{\textbf{Type}} &
\multicolumn{1}{c|}{\textbf{Found in:}}
\\
\hline \hline 
$\oAss$ & associative operad & cyclic operad &
Example~\ref{poleti_se_zitra?}
\\
\hline 
$\oAssst$ & stable associative operad & cyclic operad &
Example~\ref{poleti_se_zitra_stable}
\\
\hline 
$\oQO$ & quantum open operad & modular operad &
Example~\ref{jsem_na_zavodech_a_prsi}
\\
\hline 
$\oQOst$ & stable quantum open operad & modular operad &
Example~\ref{jsem_na_zavodech_a_prsi_stable}
\\
\hline 
$\oCom$ & commutative operad & \gg\ cyclic operad &
Example~\ref{ze_ja_vul_jsem_zacal_psate_ten_grant}
\\
\hline 
$\oComst$ & stable commutative operad & \gg\ cyclic operad &
Example~\ref{ze_ja_vul_jsem_zacal_psate_ten_grant_stable}
\\
\hline 
$\oQC$ & quantum closed operad & modular operad & 
Example~\ref{za_chvili_musim psat ten zatracenej_grant}
\\
\hline 
$\oQCst$ & stable quantum closed operad & modular operad & 
Example~\ref{za_chvili_musim psat ten zatracenej_grant_stable}
\\
\hline 
$\oQOC$ & quantum open-closed operad & modular operad & 
Example~\ref{DEFQOC}
\\
\hline 
$\oQOCst$ & stable quantum open-closed operad & modular operad & 
Example~\ref{DEFQOC_stable}
\\
\hline 
$\oOC$ & open-closed operad & \gg\ cyclic operad & 
Example~\ref{na_obed_k_Japoncum_s_Jarcou}
\\
\hline 
$\oOCst$ & stable open-closed operad & \gg\ cyclic operad & 
Example~\ref{Dnes_naposledy_v_Myluzach_na_varhanky.}
\\
\hline 
$\nsoAss$ & \ns associative  operad & \ns cyclic operad & 
Example~\ref{prvni_holubinky}
\\
\hline 
$\nsoAssst$ & stable \ns associative  operad & \ns cyclic operad & 
Example~\ref{prvni_holubinky_stable}
\\
\hline 
$\oQnsO$ & \ns quantum open  operad & \ns modular operad & 
Example~\ref{ani_v_patek_se_neleti}
\\
\hline 
$\oQnsOst$ & stable \ns quantum open  operad & \ns modular operad & 
Example~\ref{ani_v_patek_se_neleti_stable}
\\
\hline 
$\oOMB$ &   {multiple boundary operad}    & premodular operad & 
Subsect.~\ref{premodular}
\\
\hline 
$\onsOC$ & open-closed hybrid & cyclic hybrid & 
Example~\ref{h1}
\\
\hline 
$\oQnsOC$ & quantum open-closed hybrid  & modular hybrid & 
Example~\ref{hm}
\\
\hline 
$\onsOCpre$ & open-closed hybrid   & premodular hybrid & 
Example~\ref{h2}
\\
%
\hline 
$\onsOCst$ &stable  open-closed hybrid & cyclic hybrid & 
Example~\ref{Dnes_naposledy_v_Myluzach_na_varhanky.}
\\
\hline 
$\onsOCKP$ &K.-P.\ open-closed hybrid & cyclic hybrid & 
Example~\ref{KP}
\\
\hline 
$\oQnsOCKP$ &K.-P.\ open-closed hybrid & modular hybrid & 
Definition~\ref{za_chvili_s_Nunykem_na_veceri}
\\
\hline 
$\oQnsOCst$ &stable  quantum open-closed hybrid  & modular hybrid & 
Example~\ref{DEFQOC_stable}
\\
\hline
\end{tabular}
\\
\rule{0em}{.8em}
\end{center}
\caption{Operads and operad-like structures featuring in this article; see also
  diagrams~(\ref{d}) and~(\ref{dst}).}
\label{analogies}
\end{table}

Most of the material recalled in this section already appeared in the
literature or is an harmless modification of the existing notions. The
only novel concept is that of premodular
operads and premodular hybrids introduced in Definition~\ref{zitra_do_Brna}.

\subsection{Standard versions.}
We start with the following innocuous generalization 
of cyclic operads.

\begin{definition}
A {\em genus-graded\/}  cyclic operad is a cyclic operad 
with an additional grading by the `operadic genus' (or simply 
the genus) belonging to an abelian unital semigroup $\sfS$. 
\end{definition}

In other words, \gg\ cyclic operads 
are cyclic operads in the cartesian monoidal
category of $\sfS$-graded sets.
The components $\oC(S)$, $S \in
\Fin$,  of a genus-graded cyclic operad $\oC$ are thus disjoint unions
\[
\oC(S) ={\hbox {\LARGE $\sqcup$}}_{G\in \sfS}\oC(S;G)
\]
such that the structure maps
$
\ooo ab:\oC\big(S_1 \sqcup \stt a)
\times \oC\big(S_2\sqcup \stt b)
\to \oC ( S_1\sqcup S_2)
$
restrict to
\[
\ooo ab:\oC\big(S_1 \sqcup \stt a; G_1)
\times \oC\big(S_2\sqcup \stt b; G_2)
\longrightarrow \oC ( S_1\sqcup S_2; G_1 + G_2)
\]
for arbitrary $S_1,S_2 \in \Fin$ and  $G_1,G_2 \in \sfS$. 
In this article, $\sfS$ will either be $\bbN$ or
$\frac{1}{2}\bbN$.  

A morphism of genus-graded cyclic operads is a 
morphism of the underlying cyclic operads preserving the genus.
We let $\CycOp$ to denote the category of ordinary cyclic operads (no
genus grading) and
$\CycOpgg$ the category of genus-graded ones.
Taking the genus $0$ part and ignoring the remaining ones leads to the forgetful functor 
\[
\CycOpgg \longrightarrow \CycOp.
\]
On the other hand, every cyclic operad can be viewed as a
genus-graded cyclic one concentrated in genus $0\in \sfS$ with
empty components in higher genera. This gives an inclusion of
categories
\[
\iota:
\CycOp \hookrightarrow \CycOpgg
\]
which is the left adjoint of the forgetful functor above.

We will modify  
the standard definition of {\em modular operads\/} as well, 
by allowing the operadic genus $G$ to belong to a
semigroup $\sfS$ containing $\bbN$.
This is necessary since we want our theory to accommodate the quantum
open-closed operad $\oQOC$ recalled later in this section, 
cf.~Example~\ref{DEFQOC} and Remark~\ref{Je vedro takze na bruslicky
  pujdu zitra vecer.}. 
Let $\ModOp$ denote the category of these $\sfS$-graded modular
operads. The concrete $\sfS$ will always be clear from the context.

Forgetting the operadic contractions 
\[
\oxi{uv}:\oM(S \sqcup \stt {u,v};G)\to\oM(S,G+1),\  S\in \Fin,\ G
\in \sfS,
\]
every modular operads $\oM$ becomes a \gg\ cyclic operad. This 
gives rise to the forgetful functor 
\[
\square_{\rm gg} : \ModOp \to \CycOpgg.
\]
Its left adjoint will be denoted
$\oModgg:\CycOpgg \to \ModOp$.  One also has the `standard' forgetful
functor $\square : \ModOp \to \CycOp$, which replaces everything
outside genus $0$ by the empty set $\emptyset$.  Its left adjoint $\oMod* :
\CycOp \to \ModOp$ is the standard modular \envelope\ functor
introduced in~\cite[page~382]{markl:la}.

The above functors fit into the 
following diagram of adjunctions in which
the top arrows are the left adjoints to the bottom ones:
\begin{equation}
\label{je_to_tady_pruda}
\raisebox{-6em}{}
\xymatrix@C=4em@R=4em{&\CycOpgg \ar@/^.7pc/@<.1ex>[dr]^{\oModgg} \ar@/^.7pc/@<.1ex>[dl]  &
\\
\CycOp \ar@/^.7pc/@<.1ex>[rr]^{\oMod*} 
\ar@/^.7pc/@<.1ex>@{^{(}->}[ur]^\iota
&& \ar@<.1ex>@/^.7pc/[ll]_{\square}\ \ModOp\ . 
\ar@<.1ex>@/^.7pc/[ul]_{\square_{\rm gg}}
}
\end{equation}

\paragraph{\bf Orders}
At this point we need to recall some notions of~\cite{nsmod} used
later in this article in the definition of non-$\Sigma$ modular and
quantum open-closed operads.

\begin{definition}
\label{zase_jsem_podlehl_uz_po_trech_dnech}
A \emph{cycle} on a finite set 
$O=\{o_1,\ldots,o_n\}\in \Fin$ is an equivalence class of total orders on $O$
modulo the equivalence generated by
\[
(o_1,o_2,\ldots,o_n) \equiv (o_2,\ldots,o_n,o_1).
\]
A cycle represented by $(o_1,\ldots,o_n)$ will be denoted by
$\cyc{o_1,\ldots,o_n}$. The empty set has a unique cycle on it denoted
$\cyc{}$. As
in~\cite{nsmod}  we will sometimes call
cycles the {\em pancakes\/}, imagining them placed in the plane and
oriented anticlockwise.
\end{definition}

To save the space, we will sometimes leave out the commas, i.e.\ write
e.g.~$\cycs{o_1 o_2 o_3}$ instead of ~$\cycs{o_1, o_2, o_3}$. The same
simplification will be used also for finite sets, i.e.\ we will write
e.g.~$\stt{abc}$ instead of $\stt{a,b,c}$.

\begin{definition}
\label{o_vikendu_za_Bernasem} 
A {\em multicycle\/} $\sfO$ on a finite set $O\in \Fin$ consists of
\begin{enumerate}
\item[(i)] 
a disjoint unordered decomposition 
$O = o_1 \sqcup \cdots \sqcup o_b$ of the underlying set
$O$ into $b \geq 0$ possibly empty sets, and
\item[(ii)] 
a cycle $\sfo_i$ on each $o_i$, $1 \leq i \leq b$.
\end{enumerate}
In the above situation we write $\sfO=\sfo_1\cdots\sfo_b$.
\end{definition}

V.~Nov{\' a}k in~\cite{novak} introduced (partial or total) 
cyclic orders on a set. It has the property that the
disjoint union of cyclically ordered sets bears an
induced cyclic order. 
In Nov{\' a}k's terminology, 
a cycle on a finite set $O$ is the same as a total cyclic order of $O$
while a multicycle on $O$ determines a partial cyclic order on $O$
induced from the total cyclic orders of its components.
Since we allowed the sets $o_i$ in (ii) to be empty, his cyclic order on $O$
does not determine the multicycle uniquely as it cannot detect the trivial
$\sfo_i$'s which are part of the structure.
Notice that $b=0$ in Definition~\ref{o_vikendu_za_Bernasem} 
is possible only for $O=\emptyset$. We denote the corresponding
trivial multicycle by $\varnothing$.

In~\cite{nsmod} we introduced two operations on cycles.
The \emph{merging} of pancakes $\cyc{o'_1,\ldots,o'_m}$ 
and $\cyc{o''_1,\ldots,o''_n}$  is defined as
\begin{equation}
\label{eq:1merging}
\cyc{o'_1,\ldots,o'_m} \ooo{o'_m}{o''_1} \cyc{o''_1,\ldots,o''_n} :=
\cyc{o'_1,\ldots,o'_{m-1},o''_2,\ldots,o''_{n}}.
\end{equation}
Invoking the invariance of cycles
under cyclic permutations we see that~(\ref{eq:1merging}) in fact determines 
the merging
\[
\cyc{o'_1,\ldots,o'_m} \ooo{o'_i}{o''_j} 
\cyc{o''_1,\ldots,o''_n}
\]
for arbitrary $1 \leq i \leq m$, $1 \leq j \leq n$. The following picture
explains the terminology:
\[
\raisebox{-3em}{}
\psscalebox{.5.5}
{
\begin{pspicture}(0,-.4)(18.316029,1.84)
\pscircle[linecolor=black, linewidth=0.08, dimen=outer](2.4460278,0.020668488){1.0}
\pscircle[linecolor=black, linewidth=0.08, dimen=outer](6.4460278,0.020668488){1.0}
\psline[linecolor=black, linewidth=0.04, linestyle=dashed, dash=0.17638889cm 0.10583334cm](3.4460278,0.020668488)(5.4460278,0.020668488)(5.4460278,0.020668488)(5.4460278,0.020668488)
\psline[linecolor=black, linewidth=0.08](3.1460278,0.7206685)(3.546028,1.1206685)(3.546028,1.1206685)
\psline[linecolor=black, linewidth=0.08](1.7460278,0.7206685)(1.3460279,1.1206685)(1.3460279,1.1206685)
\psline[linecolor=black, linewidth=0.08](0.9460278,0.020668488)(1.4460279,0.020668488)(1.4460279,0.020668488)
\psline[linecolor=black, linewidth=0.08](2.4460278,-0.9793315)(2.4460278,-1.4793315)(2.4460278,-1.4793315)
\psline[linecolor=black, linewidth=0.06](3.1460278,-0.67933154)(3.546028,-1.0793315)(3.546028,-1.0793315)
\psline[linecolor=black, linewidth=0.06](1.7460278,-0.67933154)(1.3460279,-1.0793315)(1.3460279,-1.0793315)
\psline[linecolor=black, linewidth=0.08](6.4460278,1.5206685)(6.4460278,1.5206685)(6.4460278,1.0206685)
\psline[linecolor=black, linewidth=0.06](6.4460278,-0.9793315)(6.4460278,-1.4793315)(6.4460278,-1.4793315)
\psline[linecolor=black, linewidth=0.08](2.4460278,1.5206685)(2.4460278,1.0206685)(2.4460278,1.0206685)
\psline[linecolor=black, linewidth=0.06](5.246028,-0.9793315)(5.246028,-0.9793315)(5.646028,-0.5793315)
\psline[linecolor=black, linewidth=0.08](7.146028,0.7206685)(7.5460277,1.1206685)(7.5460277,1.1206685)
\psline[linecolor=black, linewidth=0.08](5.746028,0.7206685)(5.346028,1.1206685)(5.346028,1.1206685)
\psline[linecolor=black, linewidth=0.06](7.646028,-0.9793315)(7.246028,-0.5793315)(7.246028,-0.5793315)
\psline[linecolor=black, linewidth=0.08](13.046028,0.7206685)(13.446028,1.1206685)(13.446028,1.1206685)
\psline[linecolor=black, linewidth=0.08](11.646028,0.7206685)(11.246028,1.1206685)(11.246028,1.1206685)
\psline[linecolor=black, linewidth=0.06](10.846027,0.020668488)(11.346027,0.020668488)(11.346027,0.020668488)
\psline[linecolor=black, linewidth=0.06](12.346027,-0.9793315)(12.346027,-1.4793315)(12.346027,-1.4793315)
\psline[linecolor=black, linewidth=0.06](13.046028,-0.67933154)(13.446028,-1.0793315)(13.446028,-1.0793315)
\psline[linecolor=black, linewidth=0.06](11.646028,-0.67933154)(11.246028,-1.0793315)(11.246028,-1.0793315)
\psline[linecolor=black, linewidth=0.08](15.546028,1.5206685)(15.546028,1.5206685)(15.546028,1.0206685)
\psline[linecolor=black, linewidth=0.06](15.546028,-0.9793315)(15.546028,-1.4793315)(15.546028,-1.4793315)
\psline[linecolor=black, linewidth=0.08](12.346027,1.5206685)(12.346027,0.9206685)(12.346027,0.9206685)
\psline[linecolor=black, linewidth=0.08](14.846027,0.6206685)(14.446028,1.1206685)(14.846027,0.6206685)
\psline[linecolor=black, linewidth=0.06](16.746027,-0.9793315)(16.446028,-0.67933154)(16.446028,-0.67933154)
\psarc[linecolor=black, linewidth=0.08, dimen=outer](13.996028,1.0706685){0.95}{207.51837}{327.6011}
\psarc[linecolor=black, linewidth=0.08, dimen=outer](15.646028,0.020668488){1.0}{208}{147}
\psarc[linecolor=black, linewidth=0.08, dimen=outer](12.346027,0.020668488){1.0}{34.346157}{328.253}
\psarc[linecolor=black, linewidth=0.08, dimen=outer](13.996028,-1.0293316){0.95}{28.095282}{150.83333}
\psarc[linecolor=black, linewidth=0.04, dimen=outer, arrowsize=.2cm 2.0,arrowlength=1.4,arrowinset=0.0]{->}(2.3460279,0.020668488){2.1}{165}{311}
\rput[bl](8.4,-.23){\psscalebox{2.5 2.5}{$\Longrightarrow$}}
\rput[br](4,0.2206685){\psscalebox{1.5 1.5}{$o'_i$}}
\rput[bl](4.85,0.2206685){\psscalebox{1.5 1.5}{$o''_j$}}
\psline[linecolor=black, linewidth=0.08](16.346027,0.7206685)(16.346027,0.7206685)(16.746027,1.1206685)
\psline[linecolor=black, linewidth=0.06](14.446028,-0.9793315)(14.446028,-0.9793315)(14.846027,-0.5793315)
\psline[linecolor=black, linewidth=0.06](16.646029,0.020668488)(17.246027,0.020668488)(17.246027,0.020668488)
\psline[linecolor=black, linewidth=0.06](7.4460278,0.020668488)(7.9460278,0.020668488)(7.9460278,0.020668488)
\psdots[linecolor=black, dotstyle=o, dotsize=0.23125](5.4460278,0.020668488)
\rput{92.38133}(3.6098614,-3.4215245){\psdots[linecolor=black, dotstyle=o, dotsize=0.23125](3.4460278,0.020668488)}
\end{pspicture}
}.
\]

The second operation on cycles is the \emph{pancake cutting},  defined
by the formula
\[
\oxi{o_1o_i} \cyc{o_1,\ldots,o_i,\ldots,o_n} := 
\cyc{o_2,\ldots,o_{i-1}} \cyc{o_{i+1},\ldots,o_n}, \ 1 < i \leq n,
\]
whose result is a multicycle with two cycles. The invariance under
the cyclic group action determines
$\oxi{o_io_j} \cyc{o_1,\ldots,o_i,\ldots,o_j,\ldots,o_n}$ for
each $1 \leq  i \leq j \leq n$. The intuition behind this operation
is explained by the picture below.

\begin{center}
\psscalebox{.5.5} 
{
\begin{pspicture}(0,-2.3344152)(11.346027,2.3344152)
\pscircle[linecolor=black, linewidth=0.08, dimen=outer](2.4460278,-0.029330902){1.0}
\psline[linecolor=black, linewidth=0.08](3.1460278,0.6706691)(3.546028,1.070669)(3.546028,1.070669)
\psline[linecolor=black, linewidth=0.08](1.7460278,0.6706691)(1.3460279,1.070669)(1.3460279,1.070669)
\psline[linecolor=black, linewidth=0.08](0.9460278,-0.029330902)(1.4460279,-0.029330902)(1.4460279,-0.029330902)
\psline[linecolor=black, linewidth=0.08](2.4460278,-1.0293308)(2.4460278,-1.5293308)(2.4460278,-1.5293308)
\psline[linecolor=black, linewidth=0.08](3.1460278,-0.7293309)(3.546028,-1.1293309)(3.546028,-1.1293309)
\psline[linecolor=black, linewidth=0.08](2.4460278,1.570669)(2.4460278,0.9706691)(2.4460278,0.9706691)
\psarc[linecolor=black, linewidth=0.04, dimen=outer, arrowsize=0.2cm 2.0,arrowlength=1.4,arrowinset=0.0]{->}(2.3460279,-0.029330902){2.1}{165}{311}
\rput[bl](2.5,0.1706691){\psscalebox{1.5 1.5}{$o_i$}}
\rput[bl](1.85,-0.6){\psscalebox{1.5 1.5}{$o_j$}}
\rput[bl](5,-.4){\psscalebox{2.5 2.5}{$\Longrightarrow$}}
\psdots[linecolor=black, fillstyle=solid, dotstyle=o, dotsize=0.23125](2.046028,-0.9293309)
\psdots[linecolor=black, fillstyle=solid, dotstyle=o, dotsize=0.23125](3.3460279,0.3706691)
\psline[linecolor=black, linewidth=0.08](3.3460279,-0.42933092)(3.8460279,-0.6293309)(3.8460279,-0.6293309)
\psline[linecolor=black, linewidth=0.08](2.8460279,-0.9293309)(3.046028,-1.429331)(3.046028,-1.429331)
\psline[linecolor=black, linewidth=0.08](2.046028,0.8706691)(1.8460279,1.4706692)(1.8460279,1.4706692)
\psline[linecolor=black, linewidth=0.08](2.8460279,0.8706691)(3.046028,1.3706691)(3.046028,1.3706691)
\psline[linecolor=black, linewidth=0.08](10.446028,0.6706691)(10.846027,0.8706691)(10.846027,0.8706691)
\psline[linecolor=black, linewidth=0.08](9.046028,0.6706691)(8.646028,1.070669)(8.646028,1.070669)
\psline[linecolor=black, linewidth=0.08](8.246028,-0.029330902)(8.746028,-0.029330902)(8.746028,-0.029330902)
\psline[linecolor=black, linewidth=0.08](9.746028,-1.0293308)(9.746028,-1.5293308)(9.746028,-1.5293308)
\psline[linecolor=black, linewidth=0.06](10.446028,-0.7293309)(10.846027,-1.1293309)(10.846027,-1.1293309)
\psline[linecolor=black, linewidth=0.08](9.746028,1.570669)(9.746028,0.9706691)(9.746028,0.9706691)

\psline[linecolor=black, linewidth=0.08](10.646028,-0.42933092)(11.146028,-0.6293309)(11.146028,-0.6293309)
\psline[linecolor=black, linewidth=0.08](10.146028,-0.9293309)(10.346027,-1.429331)(10.346027,-1.429331)
\psline[linecolor=black, linewidth=0.08](8.846027,0.3706691)(8.346027,0.6706691)(8.346027,0.6706691)
\psline[linecolor=black, linewidth=0.08](9.346027,0.8706691)(9.146028,1.4706692)(9.146028,1.4706692)
\psline[linecolor=black, linewidth=0.08](10.146028,0.8706691)(10.346027,1.3706691)(10.346027,1.3706691)
\psline[linecolor=black, linewidth=0.08](3.4460278,0.3706691)(3.9460278,0.5706691)(3.9460278,0.5706691)
\psline[linecolor=black, linewidth=0.04, linestyle=dashed, dash=0.17638889cm 0.10583334cm](2.1460278,-0.8293309)(3.246028,0.2706691)(3.246028,0.2706691)
\psline[linecolor=black, linewidth=0.08](3.4460278,-0.029330902)(4.0460277,-0.029330902)(4.0460277,-0.029330902)
\psline[linecolor=black, linewidth=0.08](0.9460278,0.5706691)(1.5460278,0.3706691)(1.5460278,0.3706691)
\psline[linecolor=black, linewidth=0.08](10.746028,-0.029330902)(11.346027,-0.029330902)(11.346027,-0.029330902)
\psline[linecolor=black, linewidth=0.08](1.6460278,-0.5293309)(1.1460278,-0.8293309)
\psline[linecolor=black, linewidth=0.08](8.946028,-0.5293309)(8.646028,-0.9293309)
\psline[linecolor=black, linewidth=0.08](10.446028,-0.7293309)(10.846027,-1.1293309)
\psline[linecolor=black, linewidth=0.08](1.9460279,-1.0293308)(1.6460278,-1.429331)
\psarc[linecolor=black, linewidth=0.08, dimen=outer](9.696028,0.020669097){0.95}{38.997017}{210.56715}
\psarc[linecolor=black, linewidth=0.08, dimen=outer](9.796028,-0.0793309){0.95}{274.397}{355.44293}
\psarc[linecolor=black, linewidth=0.08, dimen=outer](10.246028,0.5706691){0.2}{299}{60}
\psarc[linecolor=black, linewidth=0.08, dimen=outer](9.096027,-0.3793309){0.25}{199.6999}{319.36636}
\psarc[linecolor=black, linewidth=0.08, dimen=outer](10.596027,-0.0793309){0.15}{341}{145}
\psarc[linecolor=black, linewidth=0.08, dimen=outer](9.796028,-0.8793309){0.15}{129.16226}{281.66156}
\psline[linecolor=black, linewidth=0.08](10.346027,0.3706691)(9.246028,-0.6293309)
\psline[linecolor=black, linewidth=0.08](10.546028,0.0706691)(9.646028,-0.8293309)
\psline[linecolor=black, linewidth=0.08](9.746028,-1.0293308)(9.946028,-1.0293308)(9.946028,-1.0293308)
\psline[linecolor=black, linewidth=0.08](10.746028,-0.029330902)(10.746028,-0.2293309)
\end{pspicture}
}
\end{center}

The pancake merging of cycles can be extended to multicycles as:
\begin{subequations}
\begin{equation}
\label{1}
\left( \sfo'_1\cdots\sfo'_{i'}\cdots\sfo'_{b'} \right) \ooo{a}{b}
\left( \sfo''_1\cdots\sfo''_{i''}\cdots\sfo''_{b''} \right): =
\sfo'_1\cdots\widehat{\sfo}'_{i'}\cdots\sfo'_{b'}\
\sfo''_1\cdots\widehat{\sfo}''_{i''}\cdots\sfo''_{b''}\
(\sfo'_{i'}\ooo{a}{b}\sfo''_{i''}),
\end{equation}
where $a$ and $b$ belong to the underlying sets of $\sfo'_{i'}$
resp.~$\sfo''_{i''}$ for some $1 \leq i' \leq b'$, $1 \leq i'' \leq
b''$, and the hat indicates the omission as usual.
To extend the pancake cutting, i.e.\ to define $\oxi{uv}
\left( \sfo_1\cdots\sfo_b \right)$, 
 we need to distinguish two cases.
It might happen that $u$ and $v$ belong to 
the same pancake, i.e. $u,v\in\sfo_{i}$
for some $1 \leq i \leq b$. Then we put
\begin{equation}
\label{2}
\oxi{uv} \left( \sfo_1\cdots\sfo_i\cdots\sfo_b \right) :=
\sfo_1\cdots (\oxi{uv}\sfo_i) \cdots\sfo_b.
\end{equation}
The second possibility is that $u\in\sfo_{i'}$ and $v\in\sfo_{i''}$ 
for $1 \leq i'\neq i'' \leq b$. Then
\begin{equation}
\label{3}
\oxi{uv} \left( \sfo_1\cdots\sfo_{i'}\cdots\sfo_{i''}\cdots\sfo_b \right) 
:= \sfo_1\cdots\widehat{\sfo}_{i'}\cdots\widehat{\sfo}_{i''}\cdots\sfo_b \
(\sfo_{i'}\ooo uv\sfo_{i''}).
\end{equation}
\end{subequations}
We have therefore defined the multicycles
\[
\sfO'  \ooo{a}{b} \sfO'' \hbox { resp. } \oxi{uv} \sfO
\]
for arbitrary multicycles $\sfO',\sfO''$ resp.\ $\sfO$ on finite sets $O',O''$
resp.\ $O$, with elements $a\in O'$, $b\in O''$ resp.\ $u,v \in O$.
Pancake  merging offers an
effective definition of the cyclic associative operad $\oAss$:

\begin{example}
\label{poleti_se_zitra?}
The component $\oAss(O)$ of the cyclic associative operad $\oAss$
is, for $O\in \Fin$, the set cycles on $O$, that is
\[
\oAss(O) := \left\{ \sfo \ |\ \sfo\textrm{ is a cycle on }O
\right\}.
\]
Clearly
$|\oAss(O)|= (|O|-1)!$\,. 
The structure operations are given by the pancake merging.
An automorphism $\rho \in \Aut(O)$ acts on the set $\oAss(O)$ by 
\[
\rho\cyc{o_1,\ldots,o_n} := \cyc{\rho(o_1),\ldots,\rho(o_n)}.
\]

We will denote by $\oAss$
both the cyclic operad $\oAss$  and its  \gg\
version $\iota(\oAss)$. The meaning will always be clear from the context.
\end{example}

\begin{example}
\label{jsem_na_zavodech_a_prsi}
The $(O;G)$-component of the quantum open modular operad $\oQO$ is,
for \hbox{$O \in \Fin$} and $G \in \bbN$
defined as
\begin{equation}
\label{dnes_jsem_vstal_brzo}
\oQO(O;G) := \left\{\rule{0em}{1.4em} \surbig{\sfo_1\cdots\sfo_b}{g} \
  |\ b\in \bbN_+,
  g\in\bbN, G=2g\!+\! b\!-\! 1, \sfo_1\cdots\sfo_b\textrm{ is a multicycle
    on }O \right\}
\end{equation}
while the other components are empty.
In~(\ref{dnes_jsem_vstal_brzo}), $\sur{\sfo_1\cdots\sfo_b}{g}$ is a
formal symbol depending on the multicycle $\sfo_1\cdots\sfo_b$ and on
a non-negative integer $g$ determined by the operadic genus by the
formula $G=2g+b-1$.  Less formally, $\sur{\sfo_1\cdots\sfo_b}{g}$
specifies the diffeomorphism class of a two-dimensional oriented genus
$g$ surface $\Sigma$ with $b$ `open' boundaries with teeth labelled by
elements of $O$ on boundaries portrayed in Figure~\ref{zubata_plocha}
taken from~\cite{nsmod}.  For this reason we call the number $g$ the
\emph{geometric genus} of $\sur{\sfo_1\cdots\sfo_b}{g}\in \oQO$.  The
operadic structure is given by connecting the teeth with ribbons so
that the orientability is not violated. 
\begin{figure}
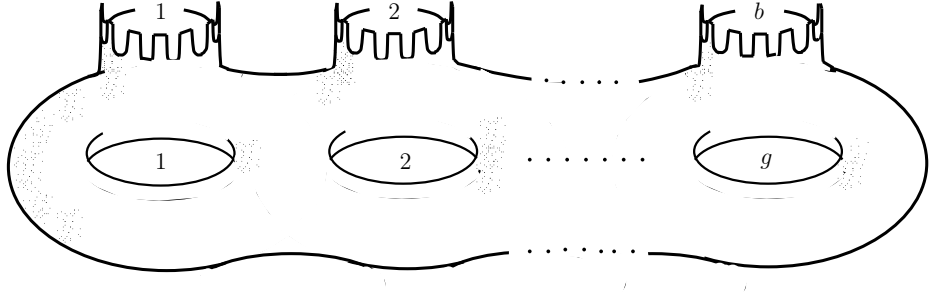

\begin{center}
\psscalebox{.25.25}
{

}
\end{center}
\caption{\label{zubata_plocha}An oriented surface $\Sigma$ with $b$ toothed 
boundaries and
  genus $g$.}
\end{figure}
Notice that we assume that $b
\geq 1$, so $\Sigma$ has at least one open boundary.
The operadic genus $G$ is related to the Euler characteristics $\chi$
of $\Sigma$
by $G = 1-\chi$.

Notice that not all combinations of $G$ and $b$ are allowed, for
instance $G=b=1$ would imply $g = \frac12$. The assumption that $g \in
\bbN$ is precisely the {\em geometricity\/}
of~\cite{nsmod}.\footnote{Notice however that in~\cite{nsmod} the
  symbols $g$ and $G$ are interchanged.}

\begin{subequations}
The $ \ooo{a}{b}$-operations are given by the pancake merging 
as
\begin{equation}
\label{a}
\surbig{ \sfo'_1\cdots\sfo'_{b'}}{g'} \ooo{a}{b}
\surbig{ \sfo''_1\cdots\sfo''_{b''}}{g''}: =
\surbig{(\sfo'_1\cdots\sfo'_{b'})  \ooo{a}{b}
(\sfo''_1\cdots\sfo''_{b''})}{g' + g''},
\end{equation}
where $g',g''\in \bbN$ and the meaning of the remaining symbols is the
same as in~(\ref{1}). The contractions are given by the pancake
cutting, i.e. in the notation of~(\ref{2}) resp.~(\ref{3}),
\begin{equation}
\label{b}
\oxi{uv} \surbig{ \sfo_1\cdots\sfo_b }g :=
\begin{cases}{
\surbig{\oxi{uv}(\sfo_1\cdots\sfo_b)}g} & \hbox{if $u$ and $v$ belong to the
same pancake, and} 
\\
\rule{0pt}{1.9em}
\surbig{\oxi{uv}(\sfo_1\cdots\sfo_b)}{g+1} &
\hbox{if they belong to different pancakes.}
\end{cases}
\end{equation}
\end{subequations}
Automorphisms $\rho \in \Aut(O)$ act according to the formula
\[
\rho\surbig{\cycs{o_1^1\cdots o^1_{n_1}}\cdots\cycs{o_1^b \cdots
    o^b_{n_b}}}{g} 
:= 
\surbig{\cycs{\rho(o_1^1)\cdots \rho(o^1_{n_1})}
\cdots\cycs{\rho(o_1^b) \cdots
    \rho(o^b_{n_b})}}{g}.
\]
\end{example}

The only solution of $G=2g\!+\! b\!-\! 1$ with  $G=0$ for
$b\in \bbN_+$ and $g\in\bbN$ is $b=1$, $g=0$. Therefore the genus $0$
component of $\oQO$ equals the associative operad $\oAss$, so 
the injection
\[
\oAss \hookrightarrow \oQO,\ 
\sfo \mapsto \surbig{\ \sfo\ }{0}
\]
defines an isomorphism $\oAss \cong\square(\oQO)$.

We proved in \cite{doubek:modass} that $\oMod*(\oAss) \cong
\oQO$. By~(\ref{je_to_tady_pruda}), $\oMod*(\oAss) \cong
\oModgg\big(\iota(\oAss)\big)$, therefore, under the identification
$\iota(\oAss) = \oAss$, one has
\[
\oModgg(\oAss) \cong \oQO.
\]

In the following example we introduce a \gg\ version of the cyclic
operad describing commutative Frobenius algebras. Its standard
definition is modified in such a way that it forms
a \gg\ cyclic suboperad of the quantum open-closed operad  $\oQOC$
recalled in Example~\ref{DEFQOC} below.

\begin{example}
\label{ze_ja_vul_jsem_zacal_psate_ten_grant}
The component $\oCom(C;G)$ of the cyclic \gg\ operad $\oCom$ is, for a
finite nonempty
set $C \in \Fin$  and a non-negative half-integer 
$G \in \frac 12\bbN$ satisfying
\begin{equation}
\label{dam_si_s_Jarcou_pivo_u_Zabacka}
G=-1+|C|/2\footnote{Notice that $G \in \frac 12\bbN$ implies $|C| 
  \geq 2$.}
\end{equation}
defined as
\[
\oCom(C;G) := 
 \left\{ C \right\},
\]
while $\oCom(C;G)$ is empty for other pairs $(C,G)$. So all
non-empty components of $\oCom$ are one-point sets indexed by $C \in \Fin$. 
The operadic composition is the 
only possible one and automorphisms from $\Aut(C)$ act trivially.

It is useful in some situations to represent the unique element of
$\oCom(C;G)$ as  the diffeomorphism class of genus-$0$ compact oriented
surfaces with holes indexed by $C$. In this visualization, the
operadic composition is given by connecting these holes by tubes,
as indicated in Figure~\ref{Jarka_stale_bez_prace} taken from~\cite{nsmod}.  
\begin{figure}[t]
\begin{center}
\psscalebox{.35.35}
{
\begin{pspicture}(0,-3.45)(6.9,3.45)
\pscircle[linecolor=black, linewidth=0.1, dimen=outer](3.45,0.0){3.45}
\rput{-5.0}(-0.1949039,0.33596677){\psellipse[linecolor=black, linewidth=0.04, dimen=outer](3.75,2.4)(0.75,0.45)}
\psellipse[linecolor=black, linewidth=0.04, linestyle=dashed, dash=0.17638889cm 0.10583334cm, dimen=outer](5.95,-0.05)(0.45,0.7)
\rput{15.0}(-0.42458665,-1.1749442){\psellipse[linecolor=black, linewidth=0.04, dimen=outer](4.25,-2.2)(0.75,0.55)}
\rput{30.0}(-0.30753174,-0.7522759){\psellipse[linecolor=black, linewidth=0.04, dimen=outer](1.25,-0.95)(0.45,0.7)}
\rput{15.0}(0.19349791,-0.8697625){\psellipse[linecolor=black, linewidth=0.04, dimen=outer](3.4,0.3)(0.7,0.65)}
\rput[l](3.5,2.45){\psscalebox{2.8}{\scriptsize $a$}}
\rput[l](1.05,-0.9){\psscalebox{2.8}{\scriptsize $b$}}
\rput[l](4.1,-2.19){\psscalebox{2.8}{\scriptsize $c$}}
\rput[l](5.75,-0.05){\psscalebox{2.8}{\scriptsize $d$}}
\rput[l](3.25,0.3){\psscalebox{2.8}{\scriptsize $e$}}
\end{pspicture}
}  
\psscalebox{.35.35}{
\begin{pspicture}(0,-4.1714683)(22.653332,4.1714683)
\rput{-5.0}(-0.11898602,0.91770875){\psellipse[linecolor=black, linewidth=0.04, dimen=outer](10.45,1.8214682)(0.75,0.45)}
\psellipse[linecolor=black, linewidth=0.04, linestyle=dashed, dash=0.17638889cm 0.10583334cm, dimen=outer](12.65,-0.6285317)(0.45,0.7)
\rput{15.0}(-0.34602472,-2.9287448){\psellipse[linecolor=black, linewidth=0.04, dimen=outer](10.95,-2.7785318)(0.75,0.55)}
\rput{30.0}(0.30083218,-4.1797843){\psellipse[linecolor=black, linewidth=0.04, dimen=outer](7.95,-1.5285317)(0.45,0.7)}
\rput{15.0}(0.27205983,-2.623563){\psellipse[linecolor=black, linewidth=0.04, linestyle=dashed, dash=0.17638889cm 0.10583334cm, dimen=outer](10.1,-0.27853173)(0.7,0.65)}
\rput[l](12.45,-0.6285317){\psscalebox{2.8}{\scriptsize $a$}}
\rput[l](7.75,-1.58){\psscalebox{2.8}{\scriptsize $x$}}
\rput[l](10.75,-2.8){\psscalebox{2.8}{\scriptsize $c$}}
\rput[l](10.2,1.8){\psscalebox{2.8}{\scriptsize $d$}}
\rput[l](9.9,-0.3){\psscalebox{2.8}{\scriptsize $e$}}
\pscustom[linecolor=black, linewidth=0.04]
{
\newpath
\moveto(14.8,2.9714682)
}
\pscustom[linecolor=black, linewidth=0.04]
{
\newpath
\moveto(15.9,2.7714682)
}
\pscustom[linecolor=black, linewidth=0.04]
{
\newpath
\moveto(12.9,-0.028531723)
\lineto(12.937313,-0.072281726)
\curveto(12.955969,-0.09415672)(13.000746,-0.13165672)(13.026865,-0.14728172)
\curveto(13.052984,-0.16290672)(13.131343,-0.19415672)(13.183581,-0.20978172)
\curveto(13.235821,-0.22540672)(13.392538,-0.25665674)(13.497015,-0.27228174)
\curveto(13.601493,-0.28790674)(13.978359,-0.30040672)(14.250747,-0.2972817)
\curveto(14.523135,-0.29415673)(14.821642,-0.29103172)(14.9,-0.29103172)
}
\pscustom[linecolor=black, linewidth=0.04]
{
\newpath
\moveto(16.8,-0.028531723)
\lineto(16.764551,-0.072281726)
\curveto(16.746828,-0.09415672)(16.70429,-0.13165672)(16.679478,-0.14728172)
\curveto(16.654665,-0.16290672)(16.580225,-0.19415672)(16.530598,-0.20978172)
\curveto(16.480968,-0.22540672)(16.33209,-0.25665674)(16.232836,-0.27228174)
\curveto(16.133581,-0.28790674)(15.77556,-0.30040672)(15.516792,-0.2972817)
\curveto(15.258022,-0.29415673)(14.974441,-0.29103172)(14.9,-0.29103172)
}
\pscustom[linecolor=black, linewidth=0.04]
{
\newpath
\moveto(12.9,-1.2285317)
\lineto(12.937313,-1.1847817)
\curveto(12.955969,-1.1629068)(13.000746,-1.1254067)(13.026865,-1.1097817)
\curveto(13.052984,-1.0941567)(13.131343,-1.0629067)(13.183581,-1.0472817)
\curveto(13.235821,-1.0316567)(13.392538,-1.0004067)(13.497015,-0.98478174)
\curveto(13.601493,-0.96915674)(13.978359,-0.9566567)(14.250747,-0.9597817)
\curveto(14.523135,-0.9629067)(14.821642,-0.96603173)(14.9,-0.96603173)
}
\pscustom[linecolor=black, linewidth=0.04]
{
\newpath
\moveto(16.9,-1.3285317)
\lineto(16.862686,-1.270198)
\curveto(16.84403,-1.2410318)(16.799253,-1.1910317)(16.773134,-1.170198)
\curveto(16.747015,-1.1493648)(16.668657,-1.1076986)(16.616419,-1.0868654)
\curveto(16.56418,-1.0660317)(16.407463,-1.0243654)(16.302984,-1.0035317)
\curveto(16.198507,-0.98269796)(15.821641,-0.96603173)(15.5492525,-0.970198)
\curveto(15.276865,-0.9743649)(14.978358,-0.9785317)(14.9,-0.9785317)
}
\psellipse[linecolor=black, linewidth=0.04, linestyle=dashed, dash=0.17638889cm 0.10583334cm, dimen=outer](17.05,-0.6285317)(0.45,0.7)
\psarc[linecolor=black, linewidth=0.08, dimen=outer](10.3,-0.6285317){3.3}{10.768175}{345.85382}
\psarc[linecolor=black, linewidth=0.08, dimen=outer](19.3,-0.6285317){3.3}{189.59221}{169.51924}
\rput{-5.0}(0.3317066,1.6402671){\psellipse[linecolor=black, linewidth=0.04, linestyle=dashed, dash=0.17638889cm 0.10583334cm, dimen=outer](18.95,-2.9785318)(0.75,0.45)}
\rput{30.0}(3.2594893,-10.621643){\psellipse[linecolor=black, linewidth=0.04, dimen=outer](21.45,0.7714683)(0.45,0.7)}
\rput{15.0}(0.4731698,-5.151145){\psellipse[linecolor=black, linewidth=0.04, dimen=outer](19.8,-0.77853173)(0.7,0.65)}
\rput(17.0,-0.6285317){\psscalebox{2.8}{\scriptsize $b$}}
\rput(19.8,-0.82853174){\psscalebox{2.8}{\scriptsize $y$}}
\rput(18.9,-3.0285318){\psscalebox{2.8}{\scriptsize $z$}}
\rput(21.5,0.7714683){\psscalebox{2.8}{\scriptsize $u$}}
\end{pspicture}
}
\end{center}
\caption{\label{Jarka_stale_bez_prace}
Left: a surface representing the element of
$\oCom\big(\{a,b,c,d,e\};3\big)$. Right: the glueing $\ooo{a}{b}$.}
\end{figure}
\end{example}

\begin{example}
\label{za_chvili_musim psat ten zatracenej_grant}
The $(C;G)$-component of the quantum closed modular operad $\oQC$
is, for $C \in \Fin$ and $G \in \frac12 \bbN$ 
given by
\[
\oQC(C;G) := \left\{\rule{0em}{1.4em} \surbig{g}{\zvedak C} \ | \  g\in\bbN, \
 G=2g-1+|C|/2     \right\},
\]
while the other components are empty. Since all non-empty components
of $\oQC(C;G)$ are one-point sets, the modular operad structure
is the unique one with  $\Aut(C)$ acting trivially. 
The symbol
$\sur{g}C$ 
represents the diffeomorphism class of closed oriented surfaces
of genus $g$ with holes indexed by the elements of $C$. The modular operadic
structure is in this representation given 
by connecting the holes by tubes as in 
Figure~\ref{Jarka_stale_bez_prace}.  

There is an obvious injection
\begin{align}
\label{ic}
i_C :\oCom \hookrightarrow \oQC \ , \
C \mapsto \sur{}{0}[C]
\end{align}
which identifies  $\oCom$ with the cyclic \gg\ suboperad of $\oQC$
consisting of elements with $g=0$. It is easy to verify directly
that 
\[
\oModgg(\oCom) \cong \oQC.
\]
A non-\gg\ version of this result appeared in
\cite[page~382]{markl:la}.
\end{example}

Finally, we recall a two-colored modular operad $\oQOC$  
containing $\oQO$ in the `open' color and $\oQC$ in the `closed' one.

\begin{example} 
\label{DEFQOC}
The $(O,C;G)$-component $\oQOC(O,C;G)$ 
of the quantum open-closed modular operad 
is, for $O,C \in \Fin$ and $G \in \frac12\bbN$,
defined as the set
\begin{gather}
\label{Podari_se_Jarusce_dostat_novou_praci?}
\left\{ \surbig{\sfo_1\cdots\sfo_b}{g}[\zvedak C] \ |\ b\in \bbN,
g\in\bbN,
\ G=2g+b-1+|C|/2,\ \sfo_1\cdots\sfo_b\textrm{ is a multicycle on }O \right\}.
\end{gather}
Other components of $\oQOC$ are empty.
The operadic compositions are
\[
\sur{ \sfo'_1\cdots\sfo'_{b'}}{g'}[C'] \ooo{a}{b}
\sur{ \sfo''_1\cdots\sfo''_{b''}}{g''}[C'']: =
\sur{(\sfo'_1\cdots\sfo'_{b'})  \ooo{a}{b}
(\sfo''_1\cdots\sfo''_{b''})}{g' + g''}[C'\sqcup\, C''],
\]
if $a,b$ are open inputs, and
\[
\sur{\sfo'_1\cdots\sfo'_b}{g'}[C'] \ooo{a}{b}
\sur{\sfo''_1\cdots\sfo''_{b''}}{g''}[C''] :=
\sur{\sfo'_1\cdots\sfo'_{b'}\ \sfo''_1\cdots\sfo''_{b''}}{g'+g''}[C'\sqcup\,
  C''\setminus\{a,b\}]
\]
if $a \in C'$, $b \in C''$ are closed inputs. We believe that the
meaning of the notation, analogous to the one used in previous
examples, is clear.
The contractions are, for open inputs $u$ and~$v$, given by
\[
\oxi{uv} \sur{ \sfo_1\cdots\sfo_b }g[\korekce C] :=
\begin{cases}{
\surbig{\oxi{uv}(\sfo_1\cdots\sfo_b)}g[\korekce C]} 
& \hbox{if $u$ and $v$ belong to the
same pancake, and} 
\\
\rule{0pt}{1.9em}
\surbig{\oxi{uv}(\sfo_1\cdots\sfo_b)}{g+1}[\korekce C] &
\hbox{if they belong to different pancakes.}
\end{cases}
\]
The contractions for closed inputs $u,v \in C$ are defined by
\[
\oxi{uv} \sur{ \sfo_1\cdots\sfo_b }g[\korekce C] :=
\sur{ \sfo_1\cdots\sfo_b }{g+1}[C \setminus \{u,v\}].
\]
There is an obvious action of the group $\Aut(O) \times \Aut(C)$ on
the set $\oQOC(O,C;G)$ extending the action of $\Aut(O)$ described in
Example~\ref{jsem_na_zavodech_a_prsi} and the trivial action of $\Aut(C)$. 
The symbol
\[
\surbig{\sfo_1\cdots\sfo_b}{g}[\korekce C]
\]
in~(\ref{Podari_se_Jarusce_dostat_novou_praci?})
represents the diffeomorphism class of closed oriented surfaces of genus $g$
with $b$ `open'
holes with teeth labelled by elements of $O$ as
portrayed in Figure~\ref{zubata_plocha},  and `closed' holes labelled
by elements of $C$. The operad structure in the open color is given by
connecting the teeth via ribbons, 
the structure in the closed color by
connecting the holes via tubes.
\end{example}

There are natural injections $\oQO \hookrightarrow \oQOC$ and 
$\oQC \hookrightarrow \oQOC$ given by
\[
\sur{\sfo_1\cdots\sfo_b}{g}  \mapsto  \sur{\sfo_1\cdots
\sfo_b}{g}[\korekce\emptyset]
\hbox { and }
\sur{}{g}[\korekce C]  \mapsto  
\sur{\varnothing}{g}[\korekce C]
\]
which identify $\oQO$ with the \gg\ suboperad of $\oQOC$ consisting of
elements with no
closed inputs, and $\oQC$ with the \gg\ suboperad of $\oQOC$ of
elements with $b=0$ open boundaries.

The \gg\ cyclic operad $\oCom$ from
Example~\ref{ze_ja_vul_jsem_zacal_psate_ten_grant} 
clearly coincides with the
suboperad of $\oQOC$ consisting of elements with $g = b =0$, 
i.e.\ elements of the form
\[
\sur{\varnothing}{0}[\korekce C] , \ C \in \Fin.
\] 
Such an element lives in  the operadic genus $G ={|C|}/2 -1$.
This shall explain the necessity of introducing  \gg\ cyclic operads in
this article. Notice that the stability 
assumption $|C| \geq 3$ implies that $G
\geq \frac 12$.

\begin{remark}
\label{Je vedro takze na bruslicky pujdu zitra vecer.}
One easily verifies that the Cardy condition
\begin{equation}
\label{stale_jeste_v_Creswicku}
\oxi{uv} \left( \sur{\cycs{uqa}}{0}[\emptyset] \ooo{a}{b} 
\sur{\cycs{bvr}}{0}[\emptyset] \right) = \sur{\cycs{q}}{0}[\{c\}]
\ooo{c}{d} \sur{\cycs{r}}{0}[\{d\}]
\end{equation}
visualized in Figure~\ref{Uz_mam_Martinovo_parte.} holds in $\oQOC$.
\begin{figure}
\begin{center}
\psscalebox{1.0 1.0} 
{
\begin{pspicture}(0,-0.763125)(12.37,0.763125)
\definecolor{colour0}{rgb}{0.9647059,0.9647059,0.9647059}
\psarc[linecolor=black, linewidth=0.04, dimen=outer](2.6845312,-0.035097655){0.32}{86.42367}{275.07962}
\psarc[linecolor=black, linewidth=0.04, dimen=outer](2.6645312,-0.055097654){0.66}{87.0549}{151.5804}
\psarc[linecolor=black, linewidth=0.04, dimen=outer](2.6845312,-0.035097655){0.64}{200.75534}{270.0}
\psarc[linecolor=black, linewidth=0.04, dimen=outer](1.8845313,0.40490234){0.24}{264.0578}{326.88867}
\psarc[linecolor=black, linewidth=0.04, dimen=outer](1.8845313,-0.39509764){0.24}{32.005383}{97.43141}
\psline[linecolor=black, linewidth=0.04](1.8845313,0.16490234)(1.5245312,0.16490234)(1.5245312,-0.15509766)(1.8845313,-0.15509766)(1.8845313,-0.15509766)
\psline[linecolor=black, linewidth=0.04](2.6845312,0.6049023)(2.6845312,0.28490233)(2.6845312,0.28490233)
\psline[linecolor=black, linewidth=0.04](2.6845312,-0.35509765)(2.6845312,-0.67509764)(2.6845312,-0.67509764)
\rput(.4,0){
\rput{-180}(6.4891067,-0.06661772){\psarc[linecolor=black, linewidth=0.04, dimen=outer](3.2445314,-0.03543906){0.32}{86.42367}{275.07962}}
\rput{-180}(6.529028,-0.026539017){\psarc[linecolor=black, linewidth=0.04, dimen=outer](3.2645051,-0.015412816){0.66}{87.0549}{151.5804}}
\rput{-180}(6.4891067,-0.06661772){\psarc[linecolor=black, linewidth=0.04, dimen=outer](3.2445314,-0.03543906){0.64}{200.75534}{270.0}}
\rput{-180}(8.090837,-0.9434644){\psarc[linecolor=black, linewidth=0.04, dimen=outer](4.0451083,-0.4743882){0.24}{264.0578}{326.88867}}
\rput{-180}(8.087685,0.65653217){\psarc[linecolor=black, linewidth=0.04, dimen=outer](4.044058,0.3256111){0.24}{32.005383}{97.43141}}
\psline[linecolor=black, linewidth=0.04](4.0447936,-0.23438841)(4.4047933,-0.2339157)(4.4043727,0.08608402)(4.044373,0.08561131)(4.044373,0.08561131)
\psline[linecolor=black, linewidth=0.04](3.2453718,-0.6754385)(3.2449517,-0.3554388)(3.2449517,-0.3554388)
\psline[linecolor=black,
linewidth=0.04](3.2441113,0.28456065)(3.2436912,0.6045604)(3.2436912,0.6045604)
}
\psellipse[linecolor=black, linewidth=0.04, dimen=outer](7.324531,-0.09509765)(0.28,0.66)
\psellipse[linecolor=black, linewidth=0.04, dimen=outer](8.604531,-0.09509765)(0.28,0.66)
\psellipse[linecolor=black, linewidth=0.04, dimen=outer](9.804531,-0.09509765)(0.28,0.66)
\psellipse[linecolor=black, linewidth=0.04, dimen=outer](10.964531,-0.09509765)(0.28,0.66)
\psline[linecolor=black, linewidth=0.04](7.324531,0.56490237)(8.604531,0.56490237)
\psline[linecolor=black, linewidth=0.04](7.364531,-0.71509767)(8.604531,-0.71509767)
\psline[linecolor=black, linewidth=0.04](9.804531,-0.71509767)(10.924531,-0.71509767)
\psline[linecolor=black, linewidth=0.04](9.844531,0.56490237)(10.964531,0.56490237)
\psarc[linecolor=black, linewidth=0.04, dimen=outer](6.884531,0.32490236){0.24}{264.0578}{326.88867}
\psline[linecolor=black, linewidth=0.04](6.884531,0.084902346)(6.5245314,0.084902346)(6.5245314,-0.23509766)(6.884531,-0.23509766)(6.884531,-0.23509766)
\psarc[linecolor=black, linewidth=0.04, dimen=outer](6.884531,-0.47509766){0.24}{32.005383}{97.43141}
\psline[linecolor=colour0, linewidth=0.06](8.4045315,0.40490234)(8.524531,0.36490235)
\psline[linecolor=colour0, linewidth=0.06](8.284532,0.20490235)(8.444531,0.20490235)
\psline[linecolor=colour0, linewidth=0.06](8.284532,0.0049023437)(8.4045315,0.0049023437)
\psline[linecolor=colour0, linewidth=0.06](8.284532,-0.19509766)(8.4045315,-0.19509766)
\psline[linecolor=colour0, linewidth=0.06](8.284532,-0.43509766)(8.444531,-0.39509764)
\psline[linecolor=colour0, linewidth=0.06](8.3645315,-0.55509764)(8.324532,-0.5150977)
\psline[linecolor=colour0, linewidth=0.06](8.4045315,-0.59509766)(8.524531,-0.5150977)
\psline[linecolor=colour0, linewidth=0.06](7.0845313,0.124902345)(7.0845313,-0.07509766)(7.0845313,-0.27509767)(7.0445313,-0.035097655)(7.0445313,0.084902346)(7.1245313,0.16490234)
\psline[linecolor=black, linewidth=0.04](11.044887,-0.2743884)(11.484793,-0.2739157)(11.48428,0.046084024)(11.0443735,0.04561131)(11.0443735,0.04561131)
\psarc[linecolor=black, linewidth=0.04, dimen=outer](11.064531,0.30490234){0.26}{139.94589}{275.07962}
\psarc[linecolor=black, linewidth=0.04, dimen=outer](11.104531,-0.53509766){0.26}{86.42367}{232.64528}
\psline[linecolor=colour0, linewidth=0.06](10.764531,0.28490233)(10.724531,0.124902345)(10.724531,0.0049023437)(10.684531,0.044902343)(10.724531,-0.19509766)(10.684531,-0.15509766)(10.724531,-0.27509767)(10.724531,-0.47509766)(10.764531,-0.39509764)(10.764531,-0.55509764)(10.804531,-0.67509764)(10.724531,-0.35509765)
\psline[linecolor=colour0, linewidth=0.06](10.724531,0.48490235)(10.564531,0.32490236)
\psline[linecolor=colour0, linewidth=0.06](10.764531,0.48490235)(10.724531,0.124902345)(10.724531,0.124902345)
\psline[linecolor=colour0, linewidth=0.06](11.244532,0.24490234)(11.284532,-0.035097655)(11.244532,-0.39509764)(11.244532,-0.39509764)
\psline[linecolor=colour0, linewidth=0.06](11.164532,0.124902345)(11.164532,-0.31509766)(11.164532,-0.31509766)
\psline[linecolor=colour0, linewidth=0.06](10.764531,-0.55509764)(10.924531,-0.55509764)
\psline[linecolor=colour0, linewidth=0.06](10.764531,-0.35509765)(10.924531,-0.43509766)(10.924531,-0.43509766)
\psline[linecolor=colour0, linewidth=0.06](10.964531,-0.23509766)(11.004531,-0.35509765)(10.924531,-0.07509766)
\psline[linecolor=colour0, linewidth=0.06](10.764531,0.44490233)(10.884531,0.40490234)
\psline[linecolor=colour0, linewidth=0.06](10.724531,0.24490234)(10.924531,0.28490233)(10.924531,0.28490233)
\psline[linecolor=colour0, linewidth=0.06](10.844531,0.044902343)(10.844531,-0.035097655)
\psline[linecolor=colour0, linewidth=0.06](10.884531,0.084902346)(11.004531,0.20490235)(11.004531,0.20490235)
\rput(0.7,-0.1){$\oxi{uv}$}
\rput(9.2,-0.1){$\ooo cd$}
\rput(3.15,-0.1){$\ooo a{\,b}$}
\rput(8.844531,0.68490237){\scriptsize $c$}
\rput(9.564531,0.68490237){\scriptsize $d$}
\rput(1.3,-.1){\scriptsize $q$}
\rput(6.3,-.1){\scriptsize $q$}
\rput(5,-0.1){\scriptsize $r$}
\rput(11.7,-0.1){\scriptsize $r$}
\rput(2.92,0.44){\scriptsize $a$}
\rput(2.92,-0.55){\scriptsize $u$}
\rput(3.4,-0.52){\scriptsize $b$}
\rput(3.4,0.44){\scriptsize $v$}
\rput(1.1645312,0.044902343){$\left(\rule{0em}{1.9em} \right.$}
\rput(5.164531,-0.11509766){$\left. \rule{0em}{1.9em}\right)$}
\rput(5.8,-0.1){$=$}
\end{pspicture}
}
\end{center}
\caption{\label{Uz_mam_Martinovo_parte.} A pictorial form of the
  Cardy condition. A better picture can be
found in \cite[(3.44)]{lauda}.}
\end{figure}
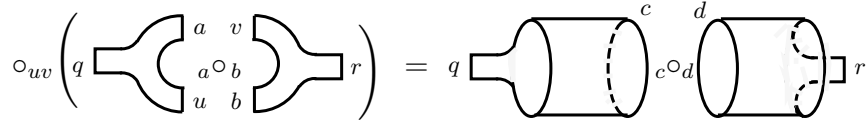
Notice that the Cardy condition involves both `open' and `closed'
structure operations.
Let us explain how it forces the operadic genus of $\oQOC$ to be
half-integral.
The terms
\[
\sur{\cycs{uqa}}{0}[\emptyset] \hbox { and }
\sur{\cycs{bvr}}{0}[\emptyset]
\] 
in the left hand side
are related by  a bijection, hence they have the same
operadic genus, say $G'$.
The  terms
\[
 \sur{\cycs{q}}{0}[\{c\}] \hbox { and }
\sur{\cycs{r}}{0}[\{d\}]
\] 
in the right hand side also have
the same operadic genus, say $G''$, by the same reason.
Since $\oxi{uv}$ is required to raise the operadic genus by $1$, 
we have
\[
2G'+1 = 2G''
\]
which shows that $G'$ and $G''$ cannot be simultaneously integral.
\end{remark}

The modular operad $\oQOC$ contains the following important
\gg\ cyclic suboperad.

\begin{example}
\label{na_obed_k_Japoncum_s_Jarcou}
The open-closed cyclic operad $\oOC$ 
is the \gg\ cyclic suboperad of $\oQOC$ consisting of diffeomorphism
classes of all surfaces of geometric
genus $0$, i.e. 
\begin{equation}
\label{Ars_Nova}
\oOC(O,C;G) := \left\{ \sur{\sfo_1\cdots\sfo_b}{g}[\korekce C] \in 
\oQOC(O,C;G)  \ | \ g=0
\right\}.
\end{equation}
\end{example}
The structures encountered so far and their inclusions are organized in
the following diagram:
\begin{equation}
\label{d}
\xymatrix@R1.2em{\oCom \ar@{^{(}->}[r]  \ar@{^{(}->}[d]   & \oQC  \ar@{^{(}->}[dr] &
\\
\oOC \ar@{^{(}->}[rr]  && \ \oQOC\, .
\\
\oAss \ar@{^{(}->}[r]  \ar@{^{(}->}[u]   & \oQO \ar@{^{(}->}[ur]  &
}
\end{equation}

\subsection{Non-$\Sigma$ versions.}
It turns out that the operad $\oQOC$ from Example~\ref{DEFQOC} 
is generated, in the sense
specified below, by a simpler object $\oQnsOC$ that behaves in the
open color as a \ns (non-symmetric) modular operad. 
Let us briefly recall what these objects are.

\Ns cyclic operads were introduced 
in \cite[p.\ 257]{markl-shnider-stasheff:book}. 
The \gg\ extension of their definition is so obvious that we can safely
leave the details to the reader. We denote their category $\nsCycOp$
resp.\  $\nsCycOpgg$ in the genus-graded case.\footnote{As usual, we
  indicate the \ns versions by underlining.} The left adjoint 
\[
\Sym: \nsCycOpgg \to \CycOpgg
\]
to the forgetful functor $ \Des : \CycOpgg \to \nsCycOpgg $ is
called the {\em symmetrization\/}.

\begin{example}
\label{prvni_holubinky}
The \gg\ cyclic operad 
$\oAss$ recalled in Example~\ref{poleti_se_zitra?} is 
the symmetrization of the \ns cyclic \gg\ operad $\nsoAss$
whose $(\sfo,G)$-component, for a cycle $\sfo$ on a finite  set
$O$ and $G\in\sfS$,  
is defined by
\[
\nsoAss(\sfo;G) := 
\begin{cases} \left\{ \sfo \right\}, & \hbox { if } 
  G=0, \hbox { and} 
\\ 
\emptyset, & \textrm{otherwise.} 
\end{cases}
\]
Since $\nsoAss(\sfo;G)$ is either empty or a one-point set, the
operadic composition is
defined in the only possible manner, and  
the subgroup $\Aut(\sfo) \subset \Aut(O)$ of automorphisms  preserving the
cyclic order acts trivially.
The isomorphism
$
\oAss \cong \Sym(\nsoAss)
$
is easy to check.
\end{example}

Since \ns modular operads were introduced only very recently
\cite{nsmod}, we recall their basic features in more detail.
While the components of \ns cyclic \gg\ 
operads are indexed by cycles  and genera, components of modular \ns
operads are indexed by {\em multi\/}cycles and genera.
We start by recalling a suitable groupoid of multicycles.

\begin{definition}
  A isomorphism $\sfO' =\sfo'_1\cdots\sfo'_{b'} \longrightarrow \sfO'' = 
\sfo''_1\cdots\sfo''_{b''}$
  of multicycles consists of a bijection
  $u:\{1,\ldots,b'\}\to \{1,\ldots,b''\}$ and of a cyclic order-preserving
  bijection $\sigma_i:\sfo'_i\to\sfo''_{u(i)}$ specified for each
  $1\leq i\leq b'$.  The groupoid of all multicycles and their
  isomorphisms will be denoted by $\MultCyc$.
\end{definition}

\begin{definition}
A {\em \ns modular operad\/} is a functor 
\[
\nsM:\MultCyc\times\textstyle{\frac{1}{2}}\bbN \longrightarrow \Set,
\]
where $\frac{1}{2}\bbN$\footnote{In~\cite{nsmod}, $\nsM$ was a
  functor $\MultCyc\times \bbN \longrightarrow \Set$, but the difference is
  irrelevant.} 
is viewed as a discrete category, together
with operadic compositions
\begin{equation}
\label{eq:7}
\ooo{u}{v} : \nsM(\sfO';G') \otimes \nsM(\sfO'';G'')  \longrightarrow \nsM( \sfO'\ooo{u}{v}\sfO'';G'+G'')
\end{equation}
defined for arbitrary disjoint multicycles $\sfO'$ and $\sfO''$ with
elements $u\in O'$, $v\in O''$ of their underlying sets, and contractions
\begin{equation}
\label{eq:8}
\oxi{uv} = \oxi{vu} : \nsM(\sfO;G) \longrightarrow \nsM(\oxi{uv}\sfO;G+1)
\end{equation}
given for any multicycle $\sfO$ and distinct elements $u,v \in O$ 
of its underlying set.
These data are required to satisfy the 
expected associativity and equivariance axioms
for which we refer the reader to~\cite{nsmod}.
\end{definition}

We will denote the category of  \ns modular operads  by
$\nsModOp$. As for cyclic operads, we have the forgetful functor $\Des
: \ModOp \to \nsModOp$ and its left adjoint $\Sym: \nsModOp \to \ModOp$.

\begin{example}
\label{ani_v_patek_se_neleti}
The quantum open modular operad $\oQO$ recalled in
Example~\ref{jsem_na_zavodech_a_prsi} is the symmetrization  
of the \ns modular operad $\oQnsO \in \nsModOp$ whose
$(\sfO;G)$-component is,
for \hbox{$\sfO =\sfo_1\cdots\sfo_b \in \MultCyc$} and $G \in \bbN$
defined as
\begin{equation*}
\oQnsO(\sfO;G) := \left\{\rule{0em}{1.1em} \sur{\sfO}{g} \
  |\
  g\in\bbN, G=2g\!+\! b\!-\! 1 \right\}
\end{equation*}
while the other components are empty. 
The structure operations are given by formulas~(\ref{a})
and~(\ref{b}).
There is a natural inclusion $i_O : \nsoAss \hookrightarrow \oQnsO$
given by 
\begin{equation}
  \label{eq:1}
i_O(\sfo) : =    \sur{\sfo}{\zvedak 0}.
\end{equation}

The symbol $\sur{\sfO}{g}$ represents, for $\sfO
=\sfo_1\cdots\sfo_b$, the diffeomorphism class of surfaces $\Sigma$ as in
Figure~\ref{zubata_plocha} with $b$ boundary components and teeth indexed by the
underlying set of $\sfO$ such that 
each $\sfo_i$, $1\leq i \leq b$, corresponds to a
specific boundary component of $\Sigma$, 
with its total cyclic order induced by
the orientation of $\Sigma$.   
\end{example}

The forgetful functor $\nsModOp\to\nsCycOpgg$ 
has the left adjoint  
\[
\nsoMod : \nsCycOpgg \to \nsModOp.
\]
The second author proved in \cite{nsmod}
that 
\[
\nsoMod(\nsoAss)\cong\oQnsO.
\]

\subsection{Premodular operads}
\label{premodular}
Consider the subcollection  $\oOMB$  of the \ns version of the
quantum open operad $\oQnsO$ of Example~\ref{ani_v_patek_se_neleti}
consisting of symbols $\sur{\sfO}{0}$ representing surfaces with
geometric genus $g=0$.  Inspecting formulas~(\ref{a}) and~(\ref{b})
defining the operadic structure of $\oQnsO$ we see that, while $\oOMB$ is closed
under the compositions $\ooo ab$, i.e.\ it is a \gg\ \ns cyclic
suboperad of $\oQnsO$, $\oOMB(\sfO)$ is closed under the contraction 
$\oxi{uv}$ only if $u$ and
$v$ belong to the same cycle of the multicycle
$\sfO=\sfo_1\cdots\sfo_b$.
The collection  $\oOMB$ is an example of the following structure.

\begin{definition}
\label{zitra_do_Brna}
A \emph{premodular operad} $\oP$ is a functor
\[
\oP : \MultCyc \times \textstyle{\frac{1}{2}}\bbN \longrightarrow \Set
\]
together with operadic compositions
\[
\ooo{u}{v} : \oP(\sfO';G') \otimes \oP(\sfO'';G'')  \longrightarrow \oP(
\sfO'\ooo{u}{v}\sfO'';G'+G'')
\]
defined for arbitrary disjoint multicycles $\sfO'$ and $\sfO''$ 
and 
elements $u\in O'$, $v\in O''$ of their underlying sets, and contractions
\[
\oxi{uv} = \oxi{vu} : \oP(\sfO;G) \longrightarrow \oP(\oxi{uv}\sfO;G+1)
\]
defined only for  distinct elements $u,v$ 
belonging to the {\em same\/} underlying set of a cycle of the multicycle $\sfO$.
These data are required to satisfy the axioms analogous 
to those for \ns modular operads.
Premodular operads and their morphisms form the category 
$\preModOp$.
\end{definition}

Premodular operads are thus specific partial \ns modular operads.
Notice that there is no analogue of premodular operads for the
standard ($\Sigma$-) modular operads.

\subsection{Hybrids}
\label{SSECOpVariants}
We will also need  operad-like structures with two colors,
`open' and `closed,' which behave differently in each of its
colors. Namely, we consider structures that are 
\begin{itemize}
\item [(i)]
\gg\  \ns cyclic  operads in the open color and ordinary \gg\ cyclic
  operads in the closed one, 
\item [(ii)] 
\ns modular operads in the open color and ordinary modular
  operads in the closed one, and
\item [(iii)]
premodular operads
in the open color and ordinary modular
  operads in the closed~one.
\end{itemize}

\begin{definition}
We call a structure of type (i),~(ii) resp.~(iii) 
a {\em cyclic, modular\/} 
resp.~{\em premodular hybrid\/}. Their categories will be denoted
$\CycHyb$, $\ModHyb$ and $\PreHyb$, respectively.
\end{definition}

Modular hybrids should be compared with another
formalization of the combinatorial structure of surfaces with open and
closed boundaries -- c/o-structures 
of~\cite[Appendix~A]{kaufmann-penner:NP06}.  

\begin{example}
\label{DG_307_v_Akropoli}
Let $\ColHyb$ denote the category of hybrid collections which are, by
definition, functors
\[
E:\MultCyc\times \Fin \times \textstyle{\frac{1}{2}}\bbN \longrightarrow \Set
\]
where $\frac{1}{2}\bbN$ is viewed as a discrete category. Informally,
objects of $\ColHyb$ are what remains from cyclic (or modular) hybrid
when one forgets all $\circ$-operations (and contractions). We
therefore have a commuting diagram
\[
\xymatrix{ & \CycHyb  \ar[rd] & 
\\ 
\ModHyb\ar[rr]\ar[ru] && \ColHyb
}
\] 
of forgetful functors and the associated commutative diagram
\[
\xymatrix{ & \CycHyb  \ar[ld]_{\ModHybFun(-)} & 
\\ 
\ModHyb  && \ColHyb \ar[ll]_{\Fr[\underline{m}od]{-}}
\ar[lu]_{\Fr[\underline{c}yc]{-}}
}
\]  
of their left adjoints. The functors $\Fr[\underline{c}yc]{-}$,
$\Fr[\underline{m}od]{-}$ and $\ModHybFun(-)$ are the free cyclic, free modular
and modular hybrid \envelope\ functors, respectively. 
\end{example}

\begin{example}
\label{dnes_vecer_s_Jarkou_a_Hankou_k_Pakousum}
Let $\bfk$ be a field, not necessarily of characteristic zero. 
Each (graded) 
$\bfk$-vector space $A$ equipped with a 
non-degenerate symmetric bilinear form $\beta_A$ has
its modular endomorphism operad 
$\End_A$, see e.g.~\cite[Example~II.5.43]{markl-shnider-stasheff:book},
\cite[Example~52]{markl:handbook} or the original 
source~\cite[(2.25)]{getzler-kapranov:CompM98}. Given another
vector space $B$ with a non-degenerate symmetric bilinear form $\beta_B$, one
can, in the obvious manner, extend the construction of $\End_A$ and 
create a~modular endomorphism hybrid $\End_{A,B}$ with components
\begin{equation}
\label{dnes_vecer_s_Nunykem_do_Matu}
\textstyle
\End_{A,B}(\sfO,C;G) := \Lin\big(\bigotimes_{o\, \in O}A_o \ot \bigotimes_{c\, \in
  C} B_c\, ,\bfk \big).
\end{equation}
In the above display, $O$ is the underlying set 
of the multicycle $\sfO$, $A_o$
resp.~$B_c$ are the identical copies of the space $A$ resp.~$B$, and
$\bigotimes$  is the unordered product in the symmetric monoidal
category of graded vector 
spaces~\cite[Definition~II.1.58]{markl-shnider-stasheff:book}. 
Due to the presence of non-degenerate bilinear forms, both $A$ and $B$
are finite-dimensional, canonically isomorphic to their duals via
raising and lowering indexes. This allows for several formally
different but equivalent definitions of the endomorphism modular
hybrid. For instance,~(\ref{dnes_vecer_s_Nunykem_do_Matu}) can be
replaced by
\[
\textstyle
\End_{A,B}(\sfO,C;G) := \bigotimes_{o\, \in O}A_o \ot \bigotimes_{c\, \in
  C} B_c
\] 
which is more in the spirit of~\cite{getzler-kapranov:CompM98}.

Having endomorphism hybrids, one can speak about algebras; 
an {\em algebra\/} for a
modular hybrid is, by definition, a morphism $\alpha : \oH \to
\End_{A,B}$. Since $\End_{A,B}$ is at the same time also a cyclic
hybrid, we define in the same way algebras for cyclic hybrids.
\end{example}

\begin{example}
\label{hm}
The operad $\oQOC$ of Example~\ref{DEFQOC} is the symmetrization, in the open
color, of the modular hybrid $\oQnsOC$ whose $(\sfO,C;G)$-component 
 is, for $\sfO=\sfo_1\cdots\sfo_b \in \MultCyc$, $C
\in \Fin$ and $G \in \frac12 \bbN$ defined as the set of symbols
\begin{gather}
\label{dnes_s_Jarcou_na_vyhlidku}
\oQnsOC(\sfO,C;G) : =\left\{ \surbig{\sfO}{g}[\rule{0pt}{.55em} C] \ |\
g\in\bbN,
\ G=2g+b-1+|C|/2 \right\}.
\end{gather}
\end{example}

\begin{example}
\label{h1}
There two-colored \gg\ cyclic operad $\oOC$ from
Example~\ref{na_obed_k_Japoncum_s_Jarcou} is the symmetrization, in the open
color, of the cyclic hybrid $\onsOC$ defined as the subcollection of
$\oQnsOC$ consisting of symbols~(\ref{dnes_s_Jarcou_na_vyhlidku}) with
$g=0$.
The hybrid $\onsOC$ clearly contains both $\Com$ and $\nsoAss$ as
graded cyclic (resp.~\ns cyclic) suboperads.
\end{example}

\begin{example}
\label{h2}
The cyclic hybrid $\onsOC$ from Example~\ref{h1} is obviously 
stable under the contractions  $\oxi{uv}$ for $u$ and $v$ belonging to
the same pancake. 
It therefore forms a premodular hybrid which we denote by 
$\onsOCpre$.
\end{example}

One clearly has the following non-$\Sigma$ version of the diagram
in~(\ref{d}) composed of ordinary and  non-$\Sigma$ operads, and
cyclic and modular hybrids:
\begin{equation}
\label{dst}
\xymatrix@R1.2em{\oCom \ar@{^{(}->}[r]  \ar@{^{(}->}[d]   & \oQC  \ar@{^{(}->}[dr] &
\\
\onsOC \ar@{^{(}->}[rr]  && \ \oQnsOC\, .
\\
\nsoAss \ar@{^{(}->}[r]  \ar@{^{(}->}[u]   & \oQnsO \ar@{^{(}->}[ur]  &
}
\end{equation}

\subsection{Stable versions} 
Let us slightly 
generalize the stability condition for modular operads
introduced in~\cite{getzler-kapranov:CompM98}.

\begin{definition}
\label{s_Jarkou_k_Zabackovi}
The \emph{stable part} of a cyclic or modular operad $\oP$ is the
collection 
defined as
\[
\oP_\St(S;G) := 
\oP(S;G)
\]
if the {\em stability}
\begin{equation}
\label{st}
2(G-1)+|S|>0
\end{equation}
is satisfied, while $\oP_\St(S;G) := 
\emptyset$ for the remaining $(S;G)$.
The operad $\oP$ is  \emph{stable} if $\oP=\oP_\St$.
\end{definition}

Inequality~(\ref{st}) is equivalent to the absence of continuous
families of automorphisms of a Riemann surface of genus $G$ with $|C|$
distinct marked points; whence its name.
Notice, that for  \gg\ cyclic operads concentrated in 
genus~$0$,~(\ref{st}) says that
$|S| \geq 3$.

Definition~\ref{s_Jarkou_k_Zabackovi} is easily modified to the
non-$\Sigma$ cases, while for hybrids we replace~(\ref{st}) by 
\begin{equation}
\label{Vcera_jsem_hodinu_bezel.}
2(G-1)+|O| +
|C|>0.
\end{equation}
The statements in the  following lemma can be verified directly.

\begin{lemma}
\label{Prijedu_zase_pristi_rok?}
Inequalities~(\ref{st}) and~(\ref{Vcera_jsem_hodinu_bezel.}) are
preserved by the $\circ$-operations and contractions. If a contraction
of $x$ belongs
to the stable part of a (non-$\Sigma$) modular operad or of a modular
hybrid, then  $x$ belongs to the stable part as well.
\end{lemma}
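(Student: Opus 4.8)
The plan is to isolate the single mechanism behind both assertions: the left-hand side of each stability inequality is \emph{additive} under $\circ$-operations and \emph{strictly invariant} under contractions. I would abbreviate the left-hand side of~(\ref{st}) by $\sigma(S;G):=2(G-1)+|S|$ and that of~(\ref{Vcera_jsem_hodinu_bezel.}) by $\sigma(O,C;G):=2(G-1)+|O|+|C|$, with $S$ (resp.\ $O$) the underlying set of the relevant multicycle in the non-$\Sigma$ cases; stability of an element is then just $\sigma>0$.

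The first step is the $\circ$-computation. By the grading conventions, every $\circ$-operation adds the operadic genus and consumes exactly two marked points, one from each factor: in the open color two open inputs vanish in the pancake merging, while in the closed color the two glued closed inputs $a,b$ are removed, cf.~(\ref{eq:7}) and Example~\ref{DEFQOC}. Writing $\sigma',\sigma''$ for the defects of the factors, this gives
\[
\sigma_{\mathrm{out}}=2\big((G'+G'')-1\big)+\big(|S'|+|S''|-2\big)=\sigma'+\sigma'',
\]
and the identical identity with $|O|+|C|$ replacing $|S|$ in the hybrid case. The second step is the contraction computation: every contraction $\oxi{uv}$ raises the operadic genus by exactly $1$ while removing exactly two marked points, cf.~(\ref{eq:8}),~(\ref{b}) and Example~\ref{DEFQOC}, whence
\[
\sigma\big(\oxi{uv}\sfO;G+1\big)=2\big((G+1)-1\big)+\big(|S|-2\big)=2(G-1)+|S|=\sigma(\sfO;G),
\]
and verbatim the same for the open and closed contractions of a hybrid.

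From here both claims drop out. For the first sentence: under a $\circ$-operation one has $\sigma_{\mathrm{out}}=\sigma'+\sigma''>0$ as soon as both factors are stable, and under a contraction $\sigma$ is literally unchanged, so positivity is preserved; this covers~(\ref{st}) and~(\ref{Vcera_jsem_hodinu_bezel.}) simultaneously. For the second sentence, invariance of $\sigma$ under contraction says that $x$ and $\oxi{uv}x$ have \emph{equal} defect, so if $\oxi{uv}x$ lies in the stable part (has positive defect) then $x$ does too, which is exactly the assertion.

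I expect no genuine obstacle beyond disciplined bookkeeping. The only thing to verify carefully is that, in each of the four hybrid operations (open and closed composition, open and closed contraction), the operadic genus adds for $\circ$ and rises by one for $\oxi{uv}$, and that precisely two marked points disappear; both are immediate from the explicit formulas of Example~\ref{DEFQOC}. The $\Sigma$, non-$\Sigma$ and premodular variants run on the identical arithmetic, so no separate argument is needed for them.
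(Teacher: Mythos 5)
Your proof is correct and is exactly the direct verification the paper has in mind (the paper gives no argument beyond ``can be verified directly''): the key point in both is that the stability defect $2(G-1)+|S|$ (resp.\ $2(G-1)+|O|+|C|$) is additive under $\circ$-operations and exactly preserved by contractions, since each operation trades two inputs for the appropriate genus shift. The organization around a single invariant $\sigma$, with the second claim following from its strict invariance under $\oxi{uv}$, is precisely the intended bookkeeping.
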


Thus the stable part of a (\ns) cyclic, (\ns) modular, 
or premodular operad, or of a 
hybrid, is the structure of the same
type, with the  operations given by the restrictions of the original ones.

\begin{example}
\label{poleti_se_zitra_stable}
The stable version  $\oAssst$ of the associative cyclic operad $\oAss$
of Example~\ref{poleti_se_zitra?} is obtained by requiring that
$\oAssst(O) = \emptyset$ if $|O| \le 2$, i.e.\
\[
 \oAssst(O) :=  
\begin{cases}
\oAss(O) & \hbox { if $|O| \geq 3$ and}
\\
\emptyset & \hbox { otherwise. }
\end{cases}
\]
The operad $\oAssst$ 
governs associative algebras with a non-degenerate invariant
bilinear 
form.\footnote{I.e.~non-commutative Frobenius algebras.}
\end{example}

\begin{example}
\label{jsem_na_zavodech_a_prsi_stable}
The stable version of the quantum open modular operad $\oQO$ from
Example~\ref{jsem_na_zavodech_a_prsi} is
defined by
\[
 \oQOst(O;G) :=  
\begin{cases}
\oQO(O;G) & \hbox { if $2(G-1)+|O|>0$ and}
\\
\emptyset & \hbox { otherwise. }
\end{cases}
\]
The operadic structure is defined by the same formulas as for $\oQO$.
\end{example}

\begin{example}
\label{ze_ja_vul_jsem_zacal_psate_ten_grant_stable}
The stable version of the \gg\ cyclic commutative operad $\oCom$ from
Example~\ref{ze_ja_vul_jsem_zacal_psate_ten_grant} is
defined by
\[
\oComst(C;G) :=  
\begin{cases}
\oCom(C;G) & \hbox { if $|C|\geq 3$ and}
\\
\emptyset & \hbox { otherwise. }
\end{cases}
\]
It is easy to check that, since $G$ is determined by $|C|$
via~(\ref{dam_si_s_Jarcou_pivo_u_Zabacka}), the 
condition $|C|\geq 3$ is equivalent
to the stability  $2(G-1)+|C|>0$ as expected. Algebras over
$\oComst$ are commutative Frobenius algebras.
\end{example}

\begin{example}
\label{za_chvili_musim psat ten zatracenej_grant_stable}
The stable variant $\oQCst$ of the quantum closed operad $\oQC$
recalled  in
Example~\ref{za_chvili_musim psat ten zatracenej_grant} is
defined by
\[
\oQCst(C;G) :=  
\begin{cases}
\oQC(C;G) & \hbox { if $2(G-1)+|C|>0$ and}
\\
\emptyset & \hbox { otherwise. }
\end{cases}
\]   
\end{example}

\begin{example} 
\label{DEFQOC_stable}
The stable version $\oQOCst$ of the quantum open-closed operad $\oQOC$ from
Example~\ref{DEFQOC} is defined by
\[
\oQOCst(O,C;G) :=  
\begin{cases}
\oQOC(O,C;G) & \hbox { if $2(G-1)+|O| + |C|>0$ and}
\\
\emptyset & \hbox { otherwise. }
\end{cases}
\] 
The stability condition for the symbol
in~(\ref{Podari_se_Jarusce_dostat_novou_praci?}) in $\oQOC(O,C;G)$ 
expressed in terms of
its geometric genus and number of boundaries reads
\[
 4g+2b+2|C| +|O| >4.
\]
The stable subhybrid  $\oQnsOCst$ of the modular hybrid  $\oQnsOC$
from Example~\ref{hm} is defined similarly.
\end{example}

\begin{example}
\label{Dnes_naposledy_v_Myluzach_na_varhanky.}
The stable version $\oOCst$ of the open-closed cyclic operad $\oOC$ from
Example~\ref{na_obed_k_Japoncum_s_Jarcou} is the \gg\ cyclic
suboperad of $\oQOCst$ consisting of all symbols as
in~(\ref{Ars_Nova}) satisfying
\[
2b+2|C| +|O| >4.
\]
Likewise,  the stable cyclic hybrid $\onsOCst$ consists of
all symbols~(\ref{dnes_s_Jarcou_na_vyhlidku}) with $g=0$ satisfying
the same inequality. 
There are six unstable elements of $\onsOC$, i.e.\
elements of  $\onsOC \setminus \onsOCst$, namely
\begin{gather}  
\label{REMListOfUnstables}
\R1:= \sur{\cycs{}}{0}[\emptyset]\!,\
\R2:= \sur{\cycs{p}}{0}[\emptyset]\!,\
\R3 :=  \sur{\cycs{pq}}{0}[\emptyset]\!,\
\R4 := \sur{\cycs{}\cycs{}}{0}[\emptyset]\!,\
\R5 := \sur{\cycs{}}{0}[\stt d]\!,\
\R6 := \sur{\varnothing}{0}[\stt{d,e}].
\end{gather}
Their operadic genera, number of boundaries, 
and the cardinalities of $O$ and
$C$ are listed in Table~\ref{Avignon}.
\begin{table}
\def\arraystretch{1.3}
\begin{center}
  \begin{tabular}{|c|c|c|c|c|c|c|c|c|c|} 
\hline 
\multicolumn{1}{|c|}{{element}} 
& \multicolumn{1}{c|}{\textbf{\hskip .5em $b$ \hskip .5em}}
& \multicolumn{1}{c|}{\textbf{\hskip .3em $G$ \hskip .3em}} &
\multicolumn{1}{c|}{\textbf{$|O|$}}&\multicolumn{1}{c|}{\textbf{$|C|$}}&
\multicolumn{1}{|c|}{{element}} & 
\multicolumn{1}{c|}{\textbf{\hskip .5em $b$ \hskip .5em}}
& \multicolumn{1}{c|}{\textbf{\hskip .3em $G$ \hskip .3em}} &
\multicolumn{1}{c|}{\textbf{$|O|$}}&\multicolumn{1}{c|}{\textbf{$|C|$}}
\\
\hline \hline 
$\R1$ & 1 & 0 & 0 &0 &$\R2$ & 1 & 0 & 1 &0
\\
\hline 
$\R3$ & 1 & 0 & 2 &0 &$\R4$ & 2 & 1 & 0 &0
\\
\hline 
$\R5$ & 1 & $\frac12$ & 0 &1 &$\R6$ & 0 & 0 & 0 &2
\\
\hline
\end{tabular}
\\
\rule{0pt}{.8em}
\end{center}
\caption{Unstable elements in $\onsOC$.\label{Avignon}}
\end{table}
The symbol $\sur{\varnothing}{0}[\stt d]$ is excluded since its operadic
genus equals  $G=-1/2$.
\end{example}

The stable version of the diagram in~(\ref{d}) can be obtained by
decorating everything by the \hbox{subscript ``st.''}

\begin{example}
\label{KP}
We will consider also the
Kaufmann-Penner cyclic subhybrid $\onsOCKP$ 
of the stable cyclic hybrid  $\onsOCst$ obtained by discarding the
following types of elements of $\onsOCst$:
\[
\hbox {type (i): }
\sur{\cycs{}\cdots\cycs{}}{0}[\emptyset], \  b\geq 3;\
\hbox {type (ii): }
\sur{\cycs{}\cdots\cycs{} \, \sfo\,}{0}[\emptyset],\ b\geq 2, \ |\sfo|
\geq 1; \  \hbox {type (iii): }
\sur{\cycs{}\cdots\cycs{}}{0}[\stt {d}],\ b\geq 2.
\]
The check that  $\onsOCKP$ is  closed under $\circ$-operations
is routine.
In the proof of Theorem~\ref{THMPresentaionOfOC} we will need an explicit list of elements
\begin{equation}
\label{zitra_na_houby}
\sur{\sfo_1 \, \cdots \sfo_b}{0}[\zvedak C] \in \onsOC
\end{equation} 
that do belong to $\onsOCKP$. We distinguish three cases depending on
the number of boundaries.
\hfill\break 
\noindent 
-- If $b \geq 2$, then~(\ref{zitra_na_houby})   belongs
  to  $\onsOCKP$ if and only if \hfill\break 
\rule{2em}{0em} -- $|C| \geq 2$, or \hfill\break 
\rule{2em}{0em} -- $|C| =1$ and at least one of $\sfo_1,\ldots,\sfo_b$ is
  not empty, or\hfill\break 
\rule{2em}{0em} -- $C = \emptyset$ and at least two of $\sfo_1,\ldots,\sfo_b$ are
  not empty,
\hfill\break 
\noindent 
-- if $b = 1$, then~(\ref{zitra_na_houby})   belongs
  to  $\onsOCKP$ if and only if $|O| + 2|C| > 2$, and
\hfill\break 
\noindent 
-- if $b = 0$, then~(\ref{zitra_na_houby})   belongs
  to  $\onsOCKP$ if and only if $|C| \geq 3$.
\end{example}

\begin{example}
\label{prvni_holubinky_stable}
The stable version $\nsoAssst$ of the \ns associative cyclic operad
$\nsoAss$ from Example~\ref{prvni_holubinky} is defined by
\[
\nsoAssst(\sfo;G) :=  
\begin{cases}
\nsoAss(\sfo;G) & \hbox {if the cardinality of the underlying set of
  $\sfo$ is $\geq 3$, and}
\\
\emptyset & \hbox { otherwise. }
\end{cases}
\]  
\end{example}

\begin{example}
\label{ani_v_patek_se_neleti_stable}
The stable version $\oQnsOst$ of the \ns associative modular operad
$\oQnsO$ from Example~\ref{ani_v_patek_se_neleti} is defined by
\[
\oQnsOst(\sfO;G) :=  
\begin{cases}
\oQnsO(\sfO;G) & \hbox {if $2(G-1)+|O|>0$, and}
\\
\emptyset & \hbox { otherwise. }
\end{cases}
\]  
\end{example}

\section{Modular \envelope\  and Cardy
  condition} 
\label{SSECModOCus}

This section forms the core of this
article. Proposition~\ref{THMModOCus} below explicitly describes the modular
\envelope\ of the cyclic hybrid $\onsOC$ and identifies it with the
set of diffeomorphism classes of suitable Riemann surfaces with embedded
circles. Its proof occupies nearly seven
pages. Theorem~\ref{PROCUnivProp} is the central result
of this paper.  It describes $\oQnsOC$ as the quotient of
$\ModHybFun(\onsOC)$ by the Cardy
condition. Theorem~\ref{THMPositiveModEnvOC} in the last subsection
characterizes $\oQnsOC$ as the modular \envelope\ of the premodular hybrid
$\onsOCpre$.

\subsection{Modular \envelope\ of cyclic hybrids}
In Example~\ref{DG_307_v_Akropoli} we introduced the
modular  hybrid \envelope\ functor  $\ModHybFun :  \CycHyb \to \ModHyb$ as the
left adjoint to the forgetful functor $\ModHyb \to \CycHyb$. It 
is clearly a combination of
the non-$\Sigma$-modular \envelope\ functor
$\nsoMod$~\cite[Section~5]{nsmod} 
in the open
color and the ordinary modular \envelope\
$\oMod*$~\cite[page~382]{markl:la} 
in the closed
one, as indicated by underlying only the first letter of ``Mod.'' 
The aim of this subsection is to describe   its value on the
open-closed cyclic hybrid $\onsOC$ from Example~\ref{h1}. This auxiliary technical
result is the main step in proving Theorem~\ref{PROCUnivProp}.

We will need the following terminology.
Let $\sfO = \sfo_1 \cdots \sfo_b$, $b \geq 1$, be a nontrivial
multicycle. 
A {\em decomposition\/} of $\sfO$ is a disjoint decomposition 
\[
\{\rada 1b\} = B_1 \cup \cdots \cup B_a
\]
of the set indexing the cycles of $\sfO$ into nonempty subsets. 
When necessary, we will
identify it with a
choice $\sfO_1,\ldots \sfO_a$ of multicycles 
$\sfO_i := \{\sfo_j \ | \ j\in B_i\}$, $1 \leq i \leq a$. In this
situation we denote $b_i := |B_i|$; clearly $b = b_1 + \cdots + b_a$.

\begin{proposition} 
\label{THMModOCus}
The component  $\ModHybFun(\onsOC)(\sfO,C;G)$ of the
modular  hybrid \envelope\ of~$\onsOC$ is, for $\sfO \in \MultCyc$, $C\in \Fin$ and $G\in \frac12
\bbN$,  the set of all symbols
\begin{equation}
\label{v_pondeli_do_Leicesteru}
\sur{V(\sfO_1;G_1) \cdots V(\sfO_a;G_a)}{g}[\rule{0pt}{.6em}  C], 
\hbox { abbreviated as }
\sur{ V_1\cdots V_a }{ g}[\rule{0pt}{.6em}C],
\end{equation} 
\begin{tabular}{lrl}
where  &(i)&\hskip -.5em  
  $g \in \bbN$,
\\
&(ii)&\hskip -.5em  
$\sfO_1,\ldots \sfO_a$  is a decomposition of $\sfO$,
\\
&(iii)&\hskip -.5em  $G_i \in
\bbN$, $1\leq i \leq a$, are such that  $\oQnsO(\sfO_i;G_i)$ is
non-empty,
and
\\
&(iv)&\hskip -.5em 
 $V(\sfO_i;G_i)$ is the unique nontrivial element of
 $\oQnsO(\sfO_i;G_i)$, $1\leq i \leq a$.
\end{tabular} \hfill\break 
We moreover assume that
\[
\textstyle
G=\sum_{i=1}^a G_i + 2g+a-1+|C|/2.
\]
For $g \in \bbN$ satisfying $G=2g-1+|C|/2$
we complete the definition by
\[
\ModHybFun(\onsOC)(\varnothing  ,C;G) :=  \left\{ \sur{ \varnothing
  }{g}[\zvedak C]
\right\},
\]
while $\ModHybFun(\onsOC)(\sfO,C;G)$ is empty in all remaining cases.
\end{proposition}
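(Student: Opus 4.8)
The plan is to exhibit the explicitly described right-hand collection, call it $\oM$, as a modular hybrid satisfying the universal property that defines the left adjoint $\ModHybFun$ from Example~\ref{DG_307_v_Akropoli}. First I would equip $\oM$ with modular hybrid structure maps defined combinatorially on the symbols~(\ref{v_pondeli_do_Leicesteru}): the closed $\circ$-operation concatenates the two lists $V_1\cdots V_a$ and $V'_1\cdots V'_{a'}$ (adding their geometric genera and gluing the two closed legs), the open $\circ$-operation merges the two affected pancakes inside the pieces they live in, the closed contraction deletes a pair of closed legs and raises $g$ by one, and the open contraction $\oxi{uv}$ acts by the non-$\Sigma$ modular operations of $\oQnsO$ on the pieces carrying $u,v$ — cutting a pancake inside one piece, merging two pancakes of one piece, or fusing two pieces into a single $\oQnsO$-element. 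The unit is the evident inclusion $\eta\colon\onsOC\hookrightarrow\square\oM$ sending a genus-zero symbol $\sur{\sfo_1\cdots\sfo_b}{0}[C]$ to the one-piece symbol with $a=1$, $g=0$, $\sfO_1=\sfo_1\cdots\sfo_b$ and $G_1=b-1$, so that $V(\sfO_1;G_1)$ is the geometric-genus-zero element of $\oQnsO$, and sending $\sur{\varnothing}{0}[C]$ to the corresponding zero-piece symbol.

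The heart of the matter is the universal property. Given a modular hybrid $\oH$ and a cyclic hybrid morphism $f\colon\onsOC\to\square\oH$, I would first prove the \emph{generation} statement: every symbol $\sur{V_1\cdots V_a}{g}[C]$ lies in the sub-modular-hybrid generated by $\eta(\onsOC)$. Each piece $V(\sfO_i;G_i)\in\oQnsO(\sfO_i;G_i)$ is produced from $\eta$-images of genus-zero $\nsoAss$-configurations by iterated open contractions — this is exactly the identification $\nsoMod(\nsoAss)\cong\oQnsO$ recalled earlier; the pieces are strung together by closed $\circ$-operations, and the surplus genus $g$ together with the closed legs $C$ is generated by closed $\circ$-operations and closed contractions, mirroring $\oModgg(\oCom)\cong\oQC$. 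Since $\oM$ is generated by $\eta(\onsOC)$, any extension $\tilde f\colon\oM\to\oH$ with $\tilde f\eta=f$ is forced on generators and hence unique, and existence reduces to checking that the forced assignment is well defined, i.e.\ independent of the way a symbol is written as a composite of structure operations applied to $\eta(\onsOC)$.

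For well-definedness I would pass to the standard graph-colimit model of the envelope from \cite{markl:la,nsmod}: a class in $\ModHybFun(\onsOC)(\sfO,C;G)$ is an isomorphism class of connected $\onsOC$-decorated graphs with external open legs $\sfO$, external closed legs $C$ and total genus $G$, modulo the relation identifying the contraction of an edge with the corresponding cyclic-hybrid $\circ$-operation at its endpoints. Contracting a spanning tree — legitimate because $\onsOC$ is a genuine cyclic hybrid, so every tree edge is a $\circ$-operation — reduces each class to a single vertex decorated by an element of $\onsOC$ carrying a number of self-loops equal to the first Betti number of the graph. Each open self-loop is an open contraction (same-pancake, preserving geometric genus by cutting a pancake, or different-pancake, raising it by merging), and each closed self-loop is a closed contraction; tracking the pancake structure and the geometric genus through these loops and applying $\nsoMod(\nsoAss)\cong\oQnsO$ to the open legs of each connected open cluster yields precisely the data $(\sfO_1,\dots,\sfO_a;G_1,\dots,G_a;g;C)$, with $\sfO_1,\dots,\sfO_a$ the partition of the boundaries into clusters that remain connected through open gluings alone.

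The main obstacle is showing that this reduced form is a genuine normal form: that $a$, the family $\{(\sfO_i,G_i)\}$ and $g$ are complete invariants, so that the colimit collapses exactly onto the claimed set and no further, and that the structure maps on $\oM$ satisfy associativity and equivariance. I would settle this by an invariance argument: because neither open nor closed contractions ever identify legs of different colors, the open clusters and their non-$\Sigma$ modular genera $(\sfO_i,G_i)$ are preserved under every admissible move on decorated graphs, and the residual closed genus $g$ is then pinned down by the bookkeeping identity $G=\sum_{i=1}^a G_i+2g+a-1+|C|/2$. The degenerate cases demand separate care — most notably the trivial multicycle $\varnothing$, where no open piece survives and the answer must collapse to the purely closed symbols $\sur{\varnothing}{g}[C]$ with $G=2g-1+|C|/2$ in agreement with $\oModgg(\oCom)\cong\oQC$, and the unstable configurations of Table~\ref{Avignon}, which I would check do not force any spurious identifications. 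Establishing this completeness of the invariants, rather than the mechanical verification of the operations, is where the bulk of the work lies.
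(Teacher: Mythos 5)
Your outline takes a genuinely different route from the paper: instead of writing down the forced extension formula for $\tilde F$ and verifying by direct computation that it is independent of all choices and is a morphism of modular hybrids (which is what the paper does over several pages, culminating in Sublemma~\ref{Zitra Airshow v Rixheimu}), you want to realize the envelope by its decorated-graph colimit and show that the symbols~(\ref{v_pondeli_do_Leicesteru}) are complete normal forms. That strategy could be made to work, but as written it has a genuine gap at exactly the step you flag as carrying the bulk of the work, and the justification you offer for that step is not merely incomplete but wrong. You claim the nest data $\bigl(a,\{(\sfO_i,G_i)\},g\bigr)$ is preserved under all admissible moves ``because neither open nor closed contractions ever identify legs of different colors.'' Color separation cannot be the reason: it holds equally in $\oQnsOC$, where the nest structure is \emph{not} invariant --- by the Cardy condition~(\ref{stale_jeste_v_Creswicku}) the one-nest element $\sur{\sur{\cycs{q}\cycs{r}}{0}}{0}[\emptyset]$ and the two-nest element $\sur{\sur{\cycs{q}}{0} \sur{\cycs{r}}{0}}{0}[\emptyset]$ are identified there, and Theorem~\ref{PROCUnivProp} says that this identification is precisely the difference between $\oQnsOC$ and $\ModHybFun(\onsOC)$. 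Any correct invariance proof must therefore use more than colors; what it must actually use is the specific genus bookkeeping of your operations on $\oM$, above all the rule that an open contraction joining two \emph{different} nests raises the ambient $g$ by one while a contraction inside a single nest does not.

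Concretely, the moves generating the graph relations (the hybrid analogue of item (i) of Proposition~\ref{PROOperationsInModEnv}) interchange a composition with a contraction, replacing $\oxi{p'p''}(w'\ooo{q'}{q''}w'')$ by $\oxi{q'q''}(w'\ooo{p'}{p''}w'')$ for $w',w''\in\onsOC$ and for all color combinations of the pairs $(p',p'')$ and $(q',q'')$; checking that your invariants survive each of these cases amounts to verifying that the structure you placed on $\oM$ satisfies the modular-hybrid interchange axioms, and in the mixed open/closed cases this hinges delicately on the $g{+}1$ rule just mentioned. Moreover, invariance alone does not suffice: you also need the converse direction, that any two one-vertex-with-loops presentations with equal invariants are connected by moves --- equivalently, that every class reduces to \emph{the} canonical composite of the shape~(\ref{Jaruska_napsala}) --- including the cluster-wise identification of iterated contractions with elements of $\oQnsO\cong\nsoMod(\nsoAss)$ and the reorganization of closed loops; your text does not address this direction at all. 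These two verifications, spelled out, are essentially the ``independence on the choices'' and ``morphism property'' computations that occupy the paper's proof; they cannot be replaced by the color-separation remark.
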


The modular operad compositions are defined as follows.
If $u$ is an open input of $V'_i$, $1 \leq i \leq a'$, 
and $v$ is an open input of $V''_j$, $1 \leq i \leq a''$, then
\[
\sur{ V'_1 \cdots V'_{a'} }{g'}[C'] \ooo{u}{v} \sur{V''_1  \cdots
  V''_{a''} }{g''}[C''] := \sur{ (V'_i\ooo{u}{v} V''_j) V'_1 \cdots
\widehat{V}'_i \cdots
 V'_{a'} V''_1 \cdots
\widehat{V}''_j \cdots
 V''_{a''}  }{g'+g''}[C' \sqcup C''].
\]
If $u \in C'$ and $v\in C''$ are closed inputs, then
\[
\sur{ V'_1 \cdots V'_{a'} }{g'}[C'] \ooo{u}{v} \sur{V''_1  \cdots
  V''_{a''} }{g''}[C'']
:= \sur{V'_1 \cdots V'_{a'} V''_1 \cdots V'_{a''} }{g'+g''}[C' \sqcup
C'' \setminus \stt{u,v}].
\]
If $u$ is an open input of $V_i$ and $v$   an open input of $V_j$, $1
\leq i,j \leq a$, $i\not= j$, then
\[
\oxi{uv} \sur{V_1  \cdots V_a }{g}[\zvedak C] := \sur{(V_i
  \ooo{u}{v} V_j) V_1 \cdots
\widehat{V}_i \cdots
\widehat{V}_j \cdots V_a }{g+1}[\zvedak C].
\]
If both $u,v$ are open inputs of the same $V_i$,  $1
\leq i \leq a$, then
\[
\oxi{uv} \sur{V_1  \cdots V_a }{g}[\zvedak C] 
= \sur{ (\oxi{uv}V_i) V_1 \cdots
\widehat{V}_i
  \cdots V_a }{g}[\zvedak C].
\]
Finally, if $u,v \in C$ are closed inputs, then
\[
\oxi{uv} \sur{ V_1 \cdots V_a }{g}[\zvedak C] : = \sur{V_1 \cdots V_a
}{g+1}
[C \zvedak \setminus \stt{u,v}].
\]
The unit $e: \onsOC \to \ModHybFun(\onsOC)$ of the adjunction
$\xymatrix@1@C0em{\ModHyb\rule{.5em}{0em} \ar@/^1pc/@<.2ex>[r]    &
  \ar@/^1pc/[l] \ \rule{.5em}{0em} \CycHyb
}
$ 
is given by
\begin{align}
\label{v_Koline_behem_soustredeni} 
e\left(\sur{\sfo_1\cdots\sfo_a}{0}[\zvedak C]
\right) :=  \sur{\sur{\sfo_1}{0} \cdots
  \sur{\sfo_a}{0} }{0}[\zvedak C]  \hbox { for }  a\geq 1, \hbox { and }
e\left(\sur{\varnothing}{0}[\zvedak C]\right) := \sur{\varnothing}{0}[C].
\end{align}

We will use the inclusion $e$ of~(\ref{v_Koline_behem_soustredeni}) to
view $\onsOC$ as a cyclic subhybrid of $\ModHybFun(\onsOC)$.
A~combinatorial characterization of pairs $(\sfO_i,G_i)$ for which
the set $\oQnsO(\sfO_i;G_i)$ in (iii) is
non-empty 
was given in
Example~\ref{ani_v_patek_se_neleti}. 
Namely, there must exist $g_i \in \bbN$ such that 
$
G_i = 2g_i + b_i -1
$, $V_i$ is then the symbol $\sur{\sfO_i}{g_i}$, $1 \leq i
\leq a$, and the element in~(\ref{v_pondeli_do_Leicesteru}) takes the form
\[
\sur{ \sur{\sfO_1}{g_1} \cdots \sur{\sfO_a}{g_a}}{\zvedak g}[\zvedak C].
\]
The graphical form of the expression above suggests to call  
$V_i =  \sur{\sfO_i}{g_i} \in\oQnsO$ in~(\ref{v_pondeli_do_Leicesteru}) 
a \emph{nest}.

\begin{remark}
\label{za_10_dni_asi_naposledy_do_Sydney}
Symbols~(\ref{v_pondeli_do_Leicesteru}) can be represented by oriented
surfaces $\Sigma$ with holes indexed by $C$, $b$ teethed boundaries
with teeth indexed by the multicycle~$\sfO$, and an extra data
consisting of $a$ embedded non-intersecting circles dividing $\Sigma$
into $a+1$ regions, say $R_1,\ldots,R_a,R_{a+1}$, such that $R_i$
contains teethed boundaries indexed by $\sfO_i$, $1\leq i \leq a$, and
$R_{a+1}$ all holes indexed by $C$.
\end{remark}

\begin{proof}[Proof of Proposition~\ref{THMModOCus}]
We need to verify the universal property saying that for each
modular hybrid $\oH$ and morphism of cyclic hybrids
$F:\onsOC\to\oH$ there is a unique morphism 
$\tilde{F}:\ModHybFun(\onsOC)\to\oH$ of modular hybrids such that the
following diagram commutes:
\[
\xymatrix@C4em{
\onsOC\ \ar@{^{(}->}^(.4)e[r] \ar_{F}[dr]  & 
\ModHybFun(\onsOC) \ar@{-->}^(.5){\tilde{F}}[d]
\\ &
\ \oH .    
}
\]
\noindent 
\paragraph{\bf Uniqueness.}
Assume that $\tF$ exists and prove its uniqueness. We have
the  diagram:
\begin{equation}
\label{psano_v_Leicesteru}
\xymatrix@C4em@R1em{
&&   \oCom \ar@{^{(}->}[r]\ar@<-.8ex>@{_{(}->}[ld]_{i_C} 
\ar[ddr]^(.6){F_{i_C}}|(.33)\hole & \oQC\ar[dd]^{F''} 
\ar@{^{(}->}[ld]_{\iota_C}
\\
&\ \onsOC \ \ar@{^{(}->}[r]\ar[ddr]^(.6)F|(.33)\hole  &
\ModHybFun(\onsOC)\ar[dd]^{\tilde{F} }&
\\
\nsoAss \ \ar[ddr]^(.6){F_{i_O}} 
\ar@{_{(}->}[r]  \ar@{^{(}->}[ur]^{i_O}&  \ar[dd]^{F'} 
\ar@{^{(}->}[ur]^{\iota_O} \oQnsO&& \oH 
\\
&&\oH\ar@{=}[ur]&
\\
&\oH \ar@{=}[ur]&&
}
\end{equation}
In this diagram, the inclusion  $\iota_O:\oQnsO \hookrightarrow
\ModHybFun(\onsOC)$ is the dotted arrow in 
\[
\xymatrix@C4em{
\nsoAss\ \ar@{^{(}->}[d] \ar@{^{(}->}[r]^{i_O}  &
\ar@{-->}[d]^{\iota_O}   
\oQnsO
\\ 
\ar@{^{(}->}[r]^e \onsOC\ &
\ModHybFun(\onsOC)    
}
\]
where $e$ is the unit of the
adjunction~(\ref{v_Koline_behem_soustredeni}) 
and $i_O$ the inclusion~(\ref{eq:1}).
The dotted arrow exists as $\nsoMod(\nsoAss)\cong\oQnsO$ by~\cite{nsmod}.
Likewise, the inclusion $\iota_C :\oQC \hookrightarrow
\ModHybFun(\onsOC)$ is the dotted arrow~in
\[
\xymatrix@C4em{
\oCom\ \ar@{^{(}->}[d]  \ar@{^{(}->}[r]^{i_C} \ar[r]  &
\ar@{-->}[d]^{\iota_C} 
 \oQC
\\ 
\ar@{^{(}->}[r]^e \onsOC\ &
\ModHybFun(\onsOC)    
}
\]
where $i_C$ is the 
inclusion~(\ref{ic}). It exists since,
by~\cite[page~382]{markl:la}, $\oQC \cong 
\oMod{\oCom}$.
It is easy to show that
\[
\iota_O( V) = \sur{V}{0}[\emptyset]
\hbox {
  resp. }\
\iota_C \sur{g}{\podpera C} =
\sur{\varnothing}{g}[\podpera  C]
\]
so the maps $\iota_O$ and $\iota_C$ are indeed inclusions.
To simplify the notation, 
we use these injections to interpret $\oQC$ and $\oQnsO$ as suboperads
of $\ModHybFun(\onsOC)$.

Further, $F' : \oQnsO \to \oH$ in~(\ref{psano_v_Leicesteru})
is the dotted arrow in 
\[
\xymatrix@C4em{
\nsoAss\ \ar@{^{(}->}[d] \ar@{^{(}->}[r]^{i_O}  &
\ar@{-->}[d]^{F'}  
\oQnsO
\\ 
\ar@{^{(}->}[r]^F \onsOC\ &
\ModHybFun(\onsOC)    
}
\] 
and $F'' : \oQC \to \oH$ the dotted arrow in
\[
\xymatrix@C4em{
\oCom\ \ar@{^{(}->}[d]  \ar@{^{(}->}[r]^{i_C} \ar[r]  &
\ar@{-->}[d]^{F''}  
\oQC
\\ 
\ar@{^{(}->}[r]^F \onsOC\ &
\ModHybFun(\onsOC)    
}
\]
By  the uniqueness of $F'$ resp.\ $F''$,
\begin{gather} \label{EQRestrictionOfTildeFOC}
\tF \circ \iota_O =\tF|_{\oQnsO} = \pF \ \hbox { and } \tF \circ \iota_C =
\tF|_{\oQC} = \ppF.
\end{gather}

Before we proceed, we need to introduce the following terminology.
Let $\sfo$ be a cycle with underlying set $O$ and $p$ an independent 
symbol. We will call,
only for the purposes of this proof, by an {\em extension\/} of $\sfo$
a cycle $p\sfo$ whose underlying set is $\stt p \sqcup O$ such that
the induced cyclic order on $O$ coincides with $\sfo$. It is clear
that extensions exist; if $\sfO = \cyc{o_1,o_2,\ldots,o_n}$, then
$\cyc{p,o_1,o_2,\ldots,o_n}$ is an extension. On the other hand,
extensions are not unique. Although
$
\cyc{o_1,o_2,\ldots,o_n} = \cyc{o_2,\ldots,o_n,o_1},
$
\[
\cyc{p,o_1,o_2,\ldots,o_n} \not= \cyc{p,o_2,\ldots,o_n,o_1}
\]
if $n \geq 2$.
Extensions can be generalized to multicycles. If $\sfO$ is a
multicycle, then an extension of $\sfO$ by $p$ is a multicycle $p\sfO$
some of whose cycles has been extended by $p$.

Using the definition of the hybrid modular structure of
$\ModHybFun(\oQnsOC)$ we get the following expression for its general element:
\begin{equation}
\label{Jaruska_napsala}
\sur{ \sur{\sfO_1}{g_1} \cdots \sur{\sfO_a}{g_a}}{\zvedak g}[C]
=
\sur{ \sur{{p'_1\sfO_1}}{g_1} }{0}[\emptyset] \ooo{p'_1}{p''_1}
\cdots
\sur{ \sur{{p'_a\sfO_a}}{g_a} }{0}[\emptyset] \ooo{p'_a}{p''_a} 
\sur{ \sur{\cycs{p''_1}}{0} \cdots
   \sur{\cycs{p''_a \zvedak} }{0} }{\zvedak g}[C]
\end{equation}
where we may further express
\begin{equation}
\label{zitra_do_Lancasteru}
\sur{ \sur{\cycs{p''_1}}{0} \cdots
   \sur{\cycs{p''_a \zvedak} }{0} }{\zvedak g}[C] = 
\oxi{u'_1u''_1}\cdots\oxi{u'_gu''_g}
\sur{ \sur{\cycs{p''_1}}{0} \cdots
   \sur{\cycs{p''_a \zvedak} }{0} }{\zvedak 0}[C \sqcup 
\stt{u'_1,u''_1,\ldots,u'_g,u''_g}]
\end{equation}
with some independent variables $u'_1,u''_1,\ldots,u'_g,u''_g$.
Notice that the elements
\[
\sur{ \sur{{p'_1\sfO_1}}{g_1} }{0}[\emptyset] ,\ldots,
\sur{ \sur{{p'_a\sfO_a}}{g_a} }{0}[\emptyset] 
\]
in the right hand side
of~(\ref{Jaruska_napsala}) belong to 
the image of $\iota_O$ and therefore are identified with 
\[
\sur{{p'_1\sfO_1}}{\mini g_1}, \ldots , 
\sur{{p'_a\sfO_a}}{\mini g_a} \in \oQnsO,
\]
while the term
\[
\sur{ \sur{\cycs{p''_1}}{0} \cdots
   \sur{\cycs{p''_a \zvedak} }{0} }{\zvedak 0}[C \sqcup 
\stt{u'_1,u''_1,\ldots,u'_g,u''_g}]
\]  
in the right hand side of~(\ref{zitra_do_Lancasteru}) belongs to the
image of $e: \onsOC \to \ModHybFun(\onsOC)$ and therefore is identified with
\[
\sur{\cycs{p''_1}\cdots\cycs{p''_a\zvedak}}{0}[\podpera C\sqcup 
\stt{u'_1,u''_1,\ldots,u'_g,u''_g}]
\in \onsOC.
\] 
Combining these observations we see that
\begin{align}
\label{s1}
\tF
\sur{ \sur{\sfO_1}{g_1} \cdots \sur{\sfO_a}{g_a}}{\zvedak g}[\podpera C]
=
F'\! \sur{{p'_1\sfO_1\! }}{\mini g_1}\! \ooo{p'_1}{p''_1}\!
\squeezeddots F' \sur{{p'_a\sfO_a\! }}{\mini g_a}\! \ooo{p'_a}{p''_a}\!
 \oxi{u'_1u''_1}\cdots\oxi{u'_gu''_g}
F\!\sur{\cycs{p''_1}\cdots\cycs{\podpera p''_a}}{0}[\podpera C\sqcup 
\stt{u'_1,u''_1,\ldots,u'_g,u''_g}] .
\end{align}
Since $F'$ is unique, (\ref{s1}) determines $\tF$ uniquely on
elements as the one in the left hand side
of~(\ref{Jaruska_napsala}). The proof of the uniqueness is finished by
observing that 
\begin{gather} \label{EQDefTildeOCTwo}
\tF\sur{\varnothing}{g}[\podpera C] =
F''\sur{}{\raisebox{.2em}{\scriptsize $g$}}[C].
\end{gather}

\paragraph{\bf Independence on the choices.}
The aim of this part is to show that the value of the
right hand side of~(\ref{s1})
does not depend on the choices of the extensions 
$p_1'\sfO_1,\ldots,p'_a\sfO_a$.
It will be convenient to rewrite it as
\begin{align}
\label{s11}
\nonumber 
\tF
\sur{ \sur{\sfO_1}{g_1} \cdots \sur{\sfO_a}{g_a}}{\zvedak g}[C]
&=  \oxi{u'_1u''_1}\cdots\oxi{u'_gu''_g} 
\left(
F'\! \sur{{p'_1\sfO_1\! }}{\mini g_1}\! \ooo{p'_1}{p''_1}\!
\cdots F' \sur{{p'_a\sfO_a\! }}{\mini  g_a}\! \ooo{p'_a}{p''_a}\!
F\!\sur{\cycs{p''_1}\cdots\cycs{p''_a\zvedak }}{0}[S]  
\right)
\end{align}
with $S := C\sqcup 
\stt{u'_1,u''_1,\ldots,u'_g,u''_g}$. To prove the independence
of the right hand side of~(\ref{s1})  on the choices, 
it clearly suffices to show the
independence of the expression
\begin{equation}
\label{uz_jsem_z_toho_pocitani_zoufaly}
F'\! \sur{{p'_1\sfO_1\! }}{\mini  g_1} \ooo{p'_1}{p''_1}    
\cdots F' \sur{{p'_a\sfO_a\! }}{\mini g_a}     \!
 \ooo{p'_a}{p''_a}\!
F\!\sur{\cycs{p''_1}\cdots\cycs{p''_a\zvedak}}{0}[S] .
\end{equation}
Since~(\ref{uz_jsem_z_toho_pocitani_zoufaly}) does not depend
on the 
order of $\sfO_1,\ldots,\sfO_a$,   it suffices to
prove that it does not depend on the choice of the extension
$p'_1\sfO_1$. 

 Assume that $\sfO_1 = \sfo_1 \sfo_2\cdots \sfo_b$, $p'_1\sfO_1 =
p'_1\sfo_1\, \sfo_2\cdots \sfo_b$,  
and prove that~(\ref{uz_jsem_z_toho_pocitani_zoufaly}) does
not depend on the choice 
of the extension $p'_1\sfo_1$ of the cycle $\sfo_1$. 
One has
\begin{align*}
F'\!& \sur{{p'_1\sfo_1\, \sfo_2\ \cdots\ \sfo_b\! }}{\mini g_1}
\ooo{p'_1}{p''_1}
\cdots F' \sur{{p'_a\sfO_a\! }}{\mini g_a} 
\ooo{p'_a}{p''_a}\!
F\!\sur{\cycs{p''_1}\cdots\cycs{p''_a\zvedak}}{0}[S] =
\\
&= F'\!\left( \sur{{\cycs{r'}\, \sfo_2\ \cdots\ \sfo_b\! }}{\mini g_1}
\! \ooo{r'}{r''}\!
 \sur{{r''p'_1\sfo_1}}{0}\right)\ooo{p'_1}{p''_1}
\cdots F' \sur{{p'_a\sfO_a\! }}{\mini g_a}   
\! \ooo{p'_a}{p''_a}\!
F\!\sur{\cycs{p''_1}\cdots\cycs{p''_a\zvedak}}{0}[S] 
\\
&= F'\!\sur{{\cycs{r'}\, \sfo_2\ \cdots\ \sfo_b\! }}{\mini g_1}
\! \ooo{r'}{r''}\!
F'\! \sur{{r''p'_1\sfo_1}}{0}\ooo{p'_1}{p''_1}
\cdots F' \sur{{p'_a\sfO_a\! }}{\mini g_a}   
\! \ooo{p'_a}{p''_a}\!
F\!\sur{\cycs{p''_1}\cdots\cycs{p''_a\zvedak}}{0}[S] 
\\
&= F'\! \sur{{\cycs{r'}\, \sfo_2\ \cdots\ \sfo_b\! }}{\mini g_1}
\! \ooo{r'}{r''}\!
F \sur{{r''p'_1\sfo_1}}{0}[\emptyset]\ooo{p'_1}{p''_1} \cdots
 F' \sur{{p'_a\sfO_a\! }}{\mini g_a}   
 \ooo{p'_a}{p''_a}\!
F\!\sur{\cycs{p''_1}\cdots\cycs{p''_a\zvedak}}{0}[S] 
\\
&= F'\! \sur{{\cycs{r'}\, \sfo_2\ \cdots\ \sfo_b\! }}{\mini g_1}
\! \ooo{r'}{r''}\!
 F' \sur{{p'_2\sfO_2\! }}{\mini g_2} \ooo{p'_2}{p''_2}
\cdots F' \sur{{p'_a\sfO_a\! }}{\mini g_a}   
\ooo{p'_a}{p''_a}\! F \sur{{r''p'_1\sfo_1}}{0}[\emptyset] \ooo{p'_1}{p''_1}
F\!\sur{\cycs{p''_1}\cdots\cycs{p''_a\zvedak}}{0}[S] 
\\
&= F'\! \sur{{\cycs{r'}\, \sfo_2\ \cdots\ \sfo_b\! }}{\mini g_1}
\! \ooo{r'}{r''}\!
 F' \sur{{p'_2\sfO_2\! }}{\mini g_a}\ooo{p'_2}{p''_2} \cdots F' \sur{{p'_a\sfO_a\! }}{\mini g_a} \ooo{p'_a}{p''_a}\!
F\!\sur{r''\sfo_1\, \cycs{p''_2} \cdots\cycs{p''_a\zvedak}}{0}[S] .
\end{align*}
The expression in the last line is clearly independent of the position at which
$p'_1$ was inserted into the cycle $\sfo_1$. 

It remains to show that~(\ref{uz_jsem_z_toho_pocitani_zoufaly}) is
independent of the order of $\sfo_1,\ldots,\sfo_b$, i.e.\ that,
choosing $p'_1\sfO_1 = p'_1\sfo_i\cdots  \sfo_1 \cdots
\widehat{\sfo}_i \cdots  \sfo_b$, the value of
(\ref{uz_jsem_z_toho_pocitani_zoufaly}) does not depend on $i$, $
1\leq i \leq b$. 
Before we do so, we 
warn the reader that while 
\[
F'\sur{\sfo}{0} = F\sur{\sfo}{0}[\emptyset]
\]
for any cycle
$\sfo$, it is not necessarily true that
\[
F'\sur{\sfO}{0} = F\sur{\sfO}{0}[\emptyset]
\] 
for a {\em multi\/}cycle
$\sfO=\sfo_1\sfo_2\cdots$.\footnote{This become true under some
additional conditions discussed in Proposition \ref{PROCUnivProp}
below.} The reason is that in general
\[
\iota_O \sur{\sfO}{0} \not= e \sur{\sfO}{0}[\emptyset].
\]
One can however still express $F'\sur{\sfO}{0}$ very explicitly as follows.

For a totally ordered finite set $A$ denote, as in
Definition~\ref{zase_jsem_podlehl_uz_po_trech_dnech}, by 
$\cycs{A}$ the induced cycle.  Each cycle
$\sfo$ is of this form for some (non-unique) totally
ordered $A$. So, let $\sfo_1 = \cycs {A_1}$ and  $\sfo_2 = \cycs {A_2}$ be 
cycles, $x',x''$ independent symbols and $\sfo := \cycs{A_1 x' A_2
  x''}$. Then, in $\oQnsO$, one has the identity
$\sur{\, \sfo_1 \sfo_2}{\podpera 0} = \oxi{x'x''}  \sur{\sfo}{\podpera 0}$, 
therefore, since $F': \oQnsO \to \oH$ is a morphism, 
\begin{equation}
  \label{Martin_Doubek_po_smrti}
F' \sur{\, \sfo_1 \sfo_2}{\podpera 0} = \oxi{x'x''}F'
\sur{\sfo}{\podpera 0}  \hbox { in } \oH .
\end{equation}
It is easy to extend~(\ref{Martin_Doubek_po_smrti}) to an arbitrary number of
cycles, i.e.\ to an arbitrary multicycle.

With~(\ref{Martin_Doubek_po_smrti}) at hand, we are ready to 
prove that~(\ref{uz_jsem_z_toho_pocitani_zoufaly}) is
independent of the order of $\sfo_1,\ldots,\sfo_b$. 
To keep the size of formulas within  reasonable limits, we assume that
$a=2$, the general case is analogous. One has
\begin{align*}
F'\! \sur{{p'_1\sfo_1\, \sfo_2\, \sfo_3\, \cdots\, \sfo_b }}{\mini
  g_1}\!& 
\ooo{p'_1}{p''_1}\!
 F' \sur{{p'_2\sfO_2\! }}{\mini g_2}\! \ooo{p'_2}{p''_2}\!
F\!\sur{\cycs{p''_1}\cycs{p''_2}}{0}[S]  =
\\
=&\
F' \sur{\cycs{s'}\cycs{r'}\, \sfo_3 \cdots \, \sfo_b}{\mini g_1}
 \ooo{s'}{s''} \ooo{r'}{r''}
F'\! \sur{{s''p'_1\sfo_1\, r''\sfo_2 }}{0}\! \ooo{p'_1}{p''_1}\!
 F' \sur{{p'_2\sfO_2\! }}{\mini g_2}\! \ooo{p'_2}{p''_2}\!
F\!\sur{\cycs{p''_1}\cycs{p''_2}}{0}[S]  
\\
=&\
F' \sur{\cycs{s'}\cycs{r'}\, \sfo_3 \cdots \, \sfo_b}{\mini g_1}
 \ooo{s'}{s''} \ooo{r'}{r''} \oxi{x'x''}
F'\! \sur{{s''p'_1r''\sfo}}{0}\! \ooo{p'_1}{p''_1}\!
 F' \sur{{p'_2\sfO_2\! }}{\mini g_2}\! \ooo{p'_2}{p''_2}\!
F\!\sur{\cycs{p''_1}\cycs{p''_2}}{0}[S]  
\\
=&\
 \oxi{x'x''}
F' \sur{\cycs{s'}\cycs{r'}\, \sfo_3 \cdots \, \sfo_b}{\mini g_1}
 \ooo{s'}{s''} \ooo{r'}{r''}
F\! \sur{{s''p'_1r''\sfo}}{0}[\emptyset]\! \ooo{p'_1}{p''_1}\!
 F' \sur{{p'_2\sfO_2\! }}{\mini g_2}\! \ooo{p'_2}{p''_2}\!
F\!\sur{\cycs{p''_1}\cycs{p''_2}}{0}[S]  \\
=&\
 \oxi{x'x''}F' \sur{\cycs{s'}\cycs{r'}\, \sfo_3 \cdots \, \sfo_b }{\mini g_1}
 \ooo{s'}{s''} \ooo{r'}{r''}
 F' \sur{{p'_2\sfO_2\! }}{\mini g_2}\! \ooo{p'_2}{p''_2}\!
F\!\sur{s'' r''\sfo \,\cycs{p''_2}}{0}[S] ,
\end{align*}
where the relation between $\sfo_1$, $\sfo_2$ in the second 
and $\sfo$ in the third
line is as in~(\ref{Martin_Doubek_po_smrti}).
The term in the last line clearly does not
see whether $p'_1$ was inserted into 
$\sfo_1$ or $\sfo_2$. This shows
that~(\ref{uz_jsem_z_toho_pocitani_zoufaly}) is invariant under the
transposition $\sfo_1 \leftrightarrow \sfo_2$. The transpositions  
$\sfo_1 \leftrightarrow \sfo_i$ for arbitrary $1< i
\leq b$ can be discussed similarly.

\paragraph{\bf Morphism property}
Let us define $\tilde{F}:\ModHybFun(\onsOC)\to\oH$ by
formulas \eqref{s1} and \eqref{EQDefTildeOCTwo}. 
It is simple to show that such an $\tF$ extends $F$, i.e.\ that $\tF
\circ e = F$; we leave this as an exercise. 
It is also clear that $\tF$ defined in this way
is equivariant with respect of  the automorphisms of the indexing
(multicyclic) sets, and is  genus-preserving. 
To finish the proof of  Proposition~\ref{THMModOCus} we need to show
that this $\tF$ commutes with the structure operations of modular hybrids.
The commutation with the modular operad
structure in the `closed' color is simple and we leave it as an
exercise.

Let us show that $\tF$ commutes with the $\ooo uv$-operations in the
`open' color. To save the space, we prove it in the simplest
nontrivial case. It will be clear that the general case can be
attended analogously. We are therefore going to prove that
\begin{equation}
  \label{dnes zmeskam Brezanku}
\tF \sur{\sur{\sfO_1}{g_1}}{\podpera g'}[\podpera C_1]
\ooo{u}{v} 
\tF \sur{\sur{\sfO_2}{g_1}}{\podpera g''}[\podpera C_2]
=
\tF
\left(
\sur{\sur{\sfO_1}{g_1}}{\podpera g'}[\podpera C_1]
\ooo{u}{v} 
\sur{\sur{\sfO_2}{g_1}}{\podpera g''}[\podpera C_2]
\right)
=
\tF\sur{\sur{\sfO_1 \ooo{u}{v} \sfO_2}{g_1 + g_2}}{\podpera g'+g''}[\podpera C_1
\sqcup \podpera C_2].
\end{equation}
From the definition of $\tF$ we get
\begin{align*}
\left(
F'\! \sur{{p'_1\sfO_1\! }}{\mini g_1} \ooo{p'_1}{p''_1}     
F\!\sur{\cycs{p''_1}}{0}[S_1]
\right)
& \ooo uv  
\left(
F'\! \sur{{p'_2\sfO_2\! }}{\mini g_2} \ooo{p'_2}{p''_2}     
F\!\sur{\cycs{p''_2}}{0}[S_2]
\right) =
\\
&=
F'\! \sur{{p'_1\sfO_1\! }}{\mini g_1} \ooo uv F'\! \sur{{p'_2\sfO_2\! }}{\mini g_2}
\ooo{p'_1}{p''_1} F\!\sur{\cycs{p''_1}}{0}[S_1]   \ooo{p'_2}{p''_2}
F\!\sur{\cycs{p''_2}}{0}[S_2]  
\\
&=
F' \sur{{p'_1p'_2(\sfO_1\ooo uv \sfO_2)  }}{\mini g_1 + g_2}
\ooo{p'_1}{p''_1} F\!\sur{\cycs{p''_1}}{0}[S_1]   \ooo{p'_2}{p''_2}
F\!\sur{\cycs{p''_2}}{0}[S_2] .
\end{align*}
Assume that $\sfO_1\ooo uv \sfO_2 = \sfo_1 \sfo_2 \cdots
\sfo_b$ and
$p'_1p'_2(\sfO_1\ooo uv \sfO_2) = p'_1p'_2 \sfo_1\, \sfo_2 \cdots
\, \sfo_b$. Then
\begin{align*}
F'& \sur{{p'_1p'_2(\sfO_1\ooo uv \sfO_2)  }}{\mini g_1 + g_2}
\ooo{p'_1}{p''_1} F\!\sur{\cycs{p''_1}}{0}[S_1]   \ooo{p'_2}{p''_2}
F\!\sur{\cycs{p''_2}}{0}[S_2] =
\\
&\hphantom{ooooo}=
F' \sur{{ p'_1p'_2 \sfo_1 \, \sfo_2 \cdots
\, \sfo_b }}{\mini g_1 + g_2}
\ooo{p'_1}{p''_1} F\!\sur{\cycs{p''_1}}{0}[S_1]   \ooo{p'_2}{p''_2}
F\!\sur{\cycs{p''_2}}{0}[S_2] 
\\
&\hphantom{ooooo}=
F' 
\sur{\cycs{r'}\, \sfo_2 \cdots\, \sfo_b}{\mini g_1 + g_2} 
\ooo{r'}{r''} 
F' \sur{r''p'_1p'_2 \sfo_1}{\mini 0} 
\ooo{p'_1}{p''_1} F\!\sur{\cycs{p''_1}}{0}[S_1]   \ooo{p'_2}{p''_2}
F\!\sur{\cycs{p''_2}}{0}[S_2]  
\\
&\hphantom{ooooo}=
F' 
\sur{\cycs{r'}\, \sfo_2 \cdots\, \sfo_b}{\mini g_1 + g_2} 
\ooo{r'}{r''} 
F \sur{r''p'_2 \sfo_1}{\mini 0} [S_1]   \ooo{p'_2}{p''_2}
F\!\sur{\cycs{p''_2}}{0}[S_2]  
=
F' 
\sur{\cycs{r'}\, \sfo_2 \cdots\, \sfo_b}{\mini g_1 + g_2} 
\ooo{r'}{r''} 
F \sur{r'' \sfo_1}{\mini 0} [S_1 \sqcup \, S_2]   
\\
&\hphantom{ooooo}=
F' 
\sur{\cycs{r'}\, \sfo_2 \cdots\, \sfo_b}{\mini g_1 + g_2} 
\ooo{r'}{r''} 
F \sur{r''p'{} \sfo_1}{\mini 0} [\emptyset]   \ooo{p'{}}{p''{}}
F\!\sur{\cycs{p''{}}}{0}[S_1 \sqcup \, S_2]  
\\
&\hphantom{ooooo}=
F' 
\sur{\cycs{r'}\, \sfo_2 \cdots\, \sfo_b}{\mini g_1 + g_2} 
\ooo{r'}{r''} 
F' \sur{r''p'{} \sfo_1}{\mini 0}   \ooo{p'{}}{p''{}}
F\!\sur{\cycs{p''{}}}{0}[S_1 \sqcup \, S_2]  
\\
&\hphantom{ooooo}=
F' 
\sur{p'{} \sfo_1 \, \sfo_2 \cdots\, \sfo_b}{\mini g_1 + g_2}   \ooo{p'{}}{p''{}}
F\!\sur{\cycs{p''{}}}{0}[S_1 \sqcup \, S_2]  =
F' 
\sur{p'{}(\sfO_1\ooo uv \sfO_2}{\mini g_1 + g_2}   \ooo{p'{}}{p''{}}
F\!\sur{\cycs{p''{}}}{0}[S_1 \sqcup \, S_2] . 
\end{align*}
The last term of the above display equals  
the right hand side of~(\ref{dnes zmeskam Brezanku}) evaluated
using~(\ref{s1}).

Let us prove that $\tF$ commutes with the `open' contractions
$\oxi{uv}$. Again we discuss only the simplest nontrivial case,
the general one can be treated similarly. We will verify that
\begin{equation}
\label{druhy_den_v_Mulhouse}
\oxi {uv} \tF  \sur{ \sur{\sfO_1}{g_1}   
\sur{\sfO_2}{g_2}}{\podpera g}[\podpera C]
=
\tF\left(  \oxi {uv} \sur{ \sur{\sfO_1}{g_1} 
\sur{\sfO_2}{g_2}}{\podpera g}[\podpera C]\right).
\end{equation}
Assume that $u,v$ belongs to the same multicycle, say to $\sfO_1$. 
Then~(\ref{druhy_den_v_Mulhouse}) boils to
\begin{equation}
\label{airshow}
\oxi {uv} \tF \sur{ \sur{\sfO_1}{g_1}  \sur{\sfO_2}{g_2}}{\podpera g}[\podpera C]
=
\tF \sur{ \oxi {uv} \sur{\sfO_1}{g_1} \sur{\sfO_2}{g_2}}{\podpera g}[\podpera C]
\end{equation}
in which, by definition,
\[
\oxi {uv} \sur{\sfO_1}{g_1} =  \sur{\oxi
  {uv}\sfO_1}{g'_1}
\]
where $g'_1$ equals $g_1$ or $g_1 +1$ depending
on whether $u$ and $v$ belong to the same cycle or the 
different cycles of $\sfO_1$.
We therefore rewrite~(\ref{airshow}) as
\begin{equation}
\label{airshowbis}
\oxi {uv} \tF \sur{ \sur{\sfO_1}{g_1}  \sur{\sfO_2}{g_2}}{\podpera g}[\podpera C]
=
\tF \sur{  \sur{\oxi {uv}\sfO_1}{g'_1} \sur{\sfO_2}{\podpera g_2}}{\podpera g}[\podpera C]
\end{equation}
The left hand side of~(\ref{airshowbis}) equals
\begin{align*}
\oxi{uv}
\left( \rule{0em}{1.6em}
F'\! \sur{{p'_1\sfO_1\! }}{\mini g_1} \ooo{p'_1}{p''_1}     
F' \sur{{p'_2\sfO_2\! }}{\mini g_2} \right.   & \left.
 \ooo{p'_2}{p''_2}\!
F\!\sur{\cycs{p''_1}\cycs{p''_2\zvedak}}{0}[S]
\right) =
\\
&=
\oxi{uv} F'\! \sur{{p'_1\sfO_1\! }}{\mini g_1} \ooo{p'_1}{p''_1}     
F' \sur{{p'_2\sfO_2\! }}{\mini g_2}    
 \ooo{p'_2}{p''_2}\!
F\!\sur{\cycs{p''_1}\cycs{p''_2\zvedak}}{0}[S] 
\\
&=
F' \sur{{p'_1 \oxi{uv}\sfO_1\! }}{\mini g'_1}  \ooo{p'_1}{p''_1}     
F' \sur{{p'_2\sfO_2\! }}{\mini g_2}    
 \ooo{p'_2}{p''_2}\!
F\!\sur{\cycs{p''_1}\cycs{p''_2\zvedak}}{0}[S] 
\end{align*}
which is  the right hand side of~(\ref{airshowbis})
expressed via~\eqref{s1}.
Before we go further,  we need to prove an auxiliary

\begin{sublemma}
\label{Zitra Airshow v Rixheimu}
Let $\sfo_1$ and $\sfo_2$ be cycles, $S$ a finite set and $u,v,p'$ and $p''$
independent symbols. Then
\begin{equation}
\label{varhanky u Svate Marie}
\oxi {uv} F \sur{\sfo_1 \ \sfo_2}{0}[\podpera S \sqcup \stt {u,v}] 
=
\oxi {p'p''} F \sur{p'\sfo_1\  p''\sfo_2}{0}[\podpera S]   .
\end{equation}
\end{sublemma}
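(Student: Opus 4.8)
The plan is to establish~(\ref{varhanky u Svate Marie}) by rewriting its right-hand side, which contracts a pair of \emph{open} teeth, into its left-hand side, which contracts a pair of \emph{closed} inputs. The only facts used are that $F$ is a morphism of cyclic hybrids (so it commutes with every $\circ$-operation in both colours) and the structure axioms of the modular hybrid $\oH$; in particular the argument does not invoke Proposition~\ref{THMModOCus}, so there is no circularity. Note that on the left-hand side the symbol $\sfo_1\ \sfo_2$ must be read as the single cyclic order obtained by merging, i.e. the pancake produced on the right after the open contraction; this is what makes the two sides carry the same multicyclic label.

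First I would factor the two-pancake argument of the right-hand side through a \emph{closed} $\circ$-operation inside $\onsOC$. Choosing a splitting $S=S_1\sqcup S_2$ and two fresh closed symbols $c,d$, one has
\[
\sur{p'\sfo_1\ p''\sfo_2}{0}[S]
=\sur{p'\sfo_1}{0}[S_1\sqcup\stt c]\ooo{c}{d}\sur{p''\sfo_2}{0}[S_2\sqcup\stt d]
\]
in $\onsOC$; applying the cyclic-hybrid morphism $F$ and using that it preserves $\ooo{c}{d}$ moves this factorisation into $\oH$. Because $p'$ now sits in the first factor and $p''$ in the second, the open contraction $\oxi{p'p''}$ and the closed composition $\ooo{c}{d}$ involve disjoint pairs of inputs, and the key step is the interchange
\[
\oxi{p'p''}\left(x\ooo{c}{d}y\right)=\oxi{cd}\left(x\ooo{p'}{p''}y\right),
\]
valid in any modular hybrid since an open and a closed contraction on disjoint legs commute (both realise the gluing of $x$ and $y$ along the two independent edges $\stt{p',p''}$ and $\stt{c,d}$, the second of which raises the genus by one). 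Performing this interchange and then evaluating the now-\emph{open} composition $\ooo{p'}{p''}$ as pancake merging, $\cycs{p'\sfo_1}\ooo{p'}{p''}\cycs{p''\sfo_2}=\sfo_1\ \sfo_2$, transforms the right-hand side of~(\ref{varhanky u Svate Marie}) into $\oxi{cd}F\sur{\sfo_1\ \sfo_2}{0}[S\sqcup\stt{c,d}]$. Finally, by the equivariance of $F$ and of the contraction under the bijection renaming $c,d$ to $u,v$, this equals $\oxi{uv}F\sur{\sfo_1\ \sfo_2}{0}[S\sqcup\stt{u,v}]$, which is exactly the left-hand side.

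I expect the interchange to be the only real obstacle: one must verify carefully, from the mixed open/closed compatibility axioms of modular hybrids, that contracting an open pair and a closed pair of legs of a $\circ$-product is order-independent, i.e. that $\oxi{p'p''}(x\ooo{c}{d}y)=\oxi{cd}(x\ooo{p'}{p''}y)$ genuinely holds; this is the one place where the hybrid structure, rather than two unrelated operads, is used. A secondary bookkeeping point is to confirm that the merged cyclic order $\cycs{p'\sfo_1}\ooo{p'}{p''}\cycs{p''\sfo_2}$ is precisely the single pancake $\sfo_1\ \sfo_2$ labelling the top row of the left-hand symbol, so that after the genus has been raised by one both sides carry the same multicyclic datum.
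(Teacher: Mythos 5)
Your proof is correct and is essentially the paper's own argument: both reduce~(\ref{varhanky u Svate Marie}) to the mixed open/closed interchange axiom $\oxi{uv}\left(x\ooo{p'}{p''}y\right)=\oxi{p'p''}\left(x\ooo{u}{v}y\right)$ of modular hybrids, applied to the $F$-images of the two one-pancake factors, using that $F$, as a morphism of cyclic hybrids, preserves both the open and the closed $\circ$-operations. The only differences are cosmetic -- the paper places $u$ and $v$ directly on the factors (closed labels $S\sqcup\stt{u}$ and $\stt{v}$) instead of your fresh symbols $c,d$ followed by an equivariance renaming -- and your reading of $\sfo_1\ \sfo_2$ as the single merged pancake is indeed the intended one.
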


\begin{proof}[Proof of the sublemma]
It follows from the axioms of modular operads that
\[
\oxi {uv} \left(
 F \sur{\sfo_1}{0}[\podpera S \sqcup \stt {u}] \ooo {p'}{p''} 
F \sur{\sfo_2}{0}[\podpera \stt{v}]
\right)
=
\oxi {p'p''} \left(
 F \sur{\sfo_1}{0}[\podpera S \sqcup \stt {u}] \ooo {u}{v} 
F \sur{\sfo_2}{0}[\podpera \stt{v}]
\right).
\]
Equation~(\ref{varhanky u Svate Marie}) is then a consequence  of the fact
that $F$ is a morphism of cyclic hybrids and of the definition of
the structure operations in $\onsOC$. 
\end{proof}

If $u \in \sfO_1$ and  $v  \in \sfO_2$,~(\ref{druhy_den_v_Mulhouse}) boils to
\begin{equation}
\label{airshow1}
\oxi {uv} \tF \sur{ \sur{\sfO_1}{g_1}  \sur{\sfO_2}{g_2}}{\podpera g}[\podpera C]
=
\tF \sur{
\sur{\sfO_1 \ooo {u}{v} \sfO_2}{g_1 + g_2}
}{\podpera g+1}[\podpera C].
\end{equation}
The left hand side of the above display equals
\begin{align*}
\oxi{uv}
\left( \rule{0em}{1.6em}
F'\! \sur{{p'_1\sfO_1\! }}{\mini g_1} \ooo{p'_1}{p''_1}     
F' \sur{{p'_2\sfO_2\! }}{\mini g_2} \right.   & \left.
\! \ooo{p'_2}{p''_2}\!
F\!\sur{\cycs{p''_1}\cycs{p''_2\zvedak}}{0}[\podpera S]
\right)=
\\
=&\
\oxi{uv} \big( F'\! \sur{{p'_1\sfO_1\! }}{\mini g_1} \ooo{p'_1}{p''_1}     
F' \sur{{p'_2\sfO_2\! }}{\mini g_2}\big)    
 \ooo{p'_2}{p''_2}\!
F\!\sur{\cycs{p''_1}\cycs{p''_2\zvedak}}{0}[\podpera S] 
\\
=&\
F' \sur{{p'_1p'_2(\sfO_1\ooo uv \sfO_2)  }}{\mini g_1 + g_2} 
\ooo{p'_1}{p''_1}     
 \ooo{p'_2}{p''_2}\!
F\!\sur{\cycs{p''_1}\cycs{p''_2\zvedak}}{0}[\podpera S] .
\end{align*}
Assume that $\sfO_1\ooo uv \sfO_2 =  \sfo_1 \sfo_2 \cdots
\sfo_b$ and  $p'_1p'_2(\sfO_1\ooo uv \sfO_2) = p'_1p'_2 \sfo_1\, \sfo_2 \cdots
\sfo_b$. Then
\begin{align*}
F' &\sur{{p'_1p'_2(\sfO_1\ooo uv \sfO_2)  }}{\mini g_1 + g_2} \!
\ooo{p'_1}{p''_1}     
 \ooo{p'_2}{p''_2}\!
F\!\sur{\cycs{p''_1}\cycs{p''_2\zvedak}}{0}[\podpera S] =
\\
&\hphantom{ooo}= 
F' \sur{  p'_1p'_2 \sfo_1 \, \sfo_2 \cdots
\, \sfo_b}{\mini g_1 + g_2} 
\ooo{p'_1}{p''_1}  
 \ooo{p'_2}{p''_2}\!
F\!\sur{\cycs{p''_1}\cycs{p''_2\zvedak}}{0}[\podpera S]
\\
&\hphantom{ooo}= 
F' 
\sur{\cycs{r'}\, \sfo_2 \cdots\, \sfo_b}{\mini g_1 + g_2} 
\ooo{r'}{r''} 
F' \sur{r''p'_1p'_2 \sfo_1}{\mini 0} 
\ooo{p'_1}{p''_1}  
 \ooo{p'_2}{p''_2}\!
F\!\sur{\cycs{p''_1}\cycs{p''_2\zvedak}}{0}[\podpera S]
\\&\hphantom{ooo}
= 
F' 
\sur{\cycs{r'}\, \sfo_2 \cdots\, \sfo_b}{\mini g_1 + g_2} 
\ooo{r'}{r''} 
 \oxi{p'_2p''_2}
F\!\sur{r''p'_2 \sfo_1 \ \cycs{p''_2\zvedak}}{0}[\podpera S]
= 
F' 
\sur{\cycs{r'}\, \sfo_2 \cdots\, \sfo_b}{\mini g_1 + g_2} 
\ooo{r'}{r''} 
 \oxi{x',x''}
F\!\sur{r''\sfo_1 }{0}[S \sqcup \stt{x',x''}]
\\
&\hphantom{ooo}= 
 \oxi{x',x''}
F' 
\sur{\cycs{r'}\, \sfo_2 \cdots\, \sfo_b}{\mini g_1 + g_2} 
\ooo{r'}{r''} F'\sur { r''p' \sfo_1}{0}\ooo{p'}{p''}  
F\!\sur{\cycs{p''} }{0}[\podpera S \sqcup \stt{x',x''}]
\\
&\hphantom{ooo}= 
 \oxi{x',x''}
 F'\sur {p' \sfo_1\, \sfo_2 \cdots\, \sfo_b}{\mini g_1 + g_2}\ooo{p'}{p''}  
F\!\sur{\cycs{p''} }{0}[\podpera S \sqcup \stt{x',x''}]
=
 \oxi{x',x''}
 F'\sur {p' (\sfO_1 \ooo uv \sfO_2)}{\mini g_1 + g_2}\ooo{p'}{p''}  
F\!\sur{\cycs{p''} }{0}[\podpera S \sqcup \stt{x',x''}],
\end{align*}
where  in the 4th line we used Sublemma~\ref{Zitra Airshow v Rixheimu}.
It is clear that the last term equals the right hand side
of~(\ref{airshow1}) evaluated via~(\ref{s1}). This finishes the proof
of Proposition~\ref{THMModOCus}.
\end{proof}

\subsection{Modular \envelope\ modulo Cardy conditions.}
In this subsection we identify $\oQnsOC$ with the quotient of 
$\ModHybFun(\onsOC)$
by the Cardy conditions.
\setcounter{footnote}{0}

\begin{theorem} 
\label{PROCUnivProp}
Let us consider the ideal\/  $\EuScript{I}$\footnote{The term {\em congruence\/} instead of
  `ideal' might be more appropriate in the context of sets, but we
  take the liberty to stick to the terminology we are
  used to.} 
in the  modular hybrid\/ $\ModHybFun(\onsOC)$ generated by the single~relation
\[
\sur{ \sur{\cycs{q}}{0} \sur{\cycs{r}}{0} }{0}[\emptyset] = \sur{
  \sur{\cycs{q}\cycs{r}}{0} }{0}[\emptyset].
\]
Then
\begin{equation}
\label{Kdy se s tim prestanu trapit?}
\oQnsOC \ \cong\ \ModHybFun(\onsOC) /\, \EuScript{I}.
\end{equation}
Consequently, for any  modular hybrid $\oH$ and any morphism 
$F:\onsOC\to\oH$ of cyclic hybrids satisfying the relation
\begin{gather} 
\label{EQCardyOCStatement}
\oxi{uv}F\sur{\cycs{uqvr}}{0}[\emptyset] 
= F\sur{\cycs{q}\cycs{r}}{0}[\emptyset],
\end{gather}
there is a unique morphism $\hat{F}:\oQnsOC\to\oH$ of modular hybrids
for which the diagram 
\[
\xymatrix@C4em{
\onsOC\ \ar@{^{(}->}[r] \ar_{F}[dr]  & 
\oQnsOC \ar@{-->}^(.5){\hat{F}}[d]
\\ &
\oH 
}
\]
commutes.
\end{theorem}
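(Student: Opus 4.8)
The plan is to realize the isomorphism~(\ref{Kdy se s tim prestanu trapit?}) as being induced by the canonical projection and then to deduce the universal property formally. First I would feed the inclusion $j:\onsOC\hookrightarrow\oQnsOC$ (which presents $\onsOC$ as the geometric genus $0$ part of $\oQnsOC$, Example~\ref{h1}) into the universal property of Proposition~\ref{THMModOCus}, obtaining a morphism of modular hybrids $\pi:\ModHybFun(\onsOC)\to\oQnsOC$ with $\pi\circ e=j$. Reading off the structure operations I would identify $\pi$ explicitly as the map forgetting the nesting circles of Remark~\ref{za_10_dni_asi_naposledy_do_Sydney} and adding the genera,
\[
\pi\sur{\sur{\sfO_1}{g_1}\cdots\sur{\sfO_a}{g_a}}{g}[\zvedak C]=\sur{\sfO_1\cdots\sfO_a}{g_1+\cdots+g_a+g}[\zvedak C],\qquad \pi\sur{\varnothing}{g}[\zvedak C]=\sur{\varnothing}{g}[\zvedak C].
\]
Surjectivity is then immediate since $\sur{\sur{\sfO}{g}}{0}[\zvedak C]$ is a preimage of $\sur{\sfO}{g}[\zvedak C]$, and a short computation with the contraction and composition formulas shows that $\pi$ sends both sides of the generating relation of $\EuScript I$ to $\sur{\cycs{q}\cycs{r}}{0}[\emptyset]$, so that $\EuScript I\subseteq\ker\pi$ and $\pi$ descends to a surjection $\ol\pi:\ModHybFun(\onsOC)/\EuScript I\to\oQnsOC$. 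Everything reduces to showing that $\ol\pi$ is injective.

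For injectivity I would work with \emph{normal forms}: the single-nest symbols $\sur{\sur{\sfO}{g}}{0}[\zvedak C]$ together with the nestless $\sur{\varnothing}{g}[\zvedak C]$. Since $\pi$ restricts to a bijection between normal forms and $\oQnsOC$, and $\EuScript I\subseteq\ker\pi$ forces distinct normal forms never to be $\EuScript I$-equivalent, it suffices to prove that every element is $\EuScript I$-equivalent to a (then necessarily unique) normal form. This I would obtain by two reduction moves on a general symbol $\sur{\sur{\sfO_1}{g_1}\cdots\sur{\sfO_a}{g_a}}{g}[\zvedak C]$: the \emph{nest merging} $\sur{\cdots\sur{\sfO_i}{g_i}\sur{\sfO_j}{g_j}\cdots}{g}[\zvedak C]\equiv\sur{\cdots\sur{\sfO_i\sfO_j}{g_i+g_j}\cdots}{g}[\zvedak C]$, collapsing $a$ nests to one, and the \emph{genus transfer} $\sur{\sur{\sfO}{g'}}{g}[\zvedak C]\equiv\sur{\sur{\sfO}{g'+g}}{0}[\zvedak C]$, emptying the outer genus into the surviving nest. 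Merging repeatedly and then transferring reduces any element with $a\ge 1$ to normal form, while the case $a=0$ is already normal.

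The heart of the matter, and the step I expect to be the main obstacle, is deriving these two moves from the single Cardy generator, which is exactly where the open--closed nature of the condition is used. The base case of merging, for two singleton nests with empty closed part, is the generator itself. The base case of genus transfer I would obtain by applying the open contraction $\oxi{uv}$ to a fattened generator $\sur{\sur{\cycs{qa}}{0}\sur{\cycs{rb}}{0}}{0}[\emptyset]\equiv\sur{\sur{\cycs{qa}\cycs{rb}}{0}}{0}[\emptyset]$: on the left $q,r$ lie in different nests, so $\oxi{qr}$ merges them and raises the \emph{outer} genus, whereas on the right they lie in different pancakes of one nest, so $\oxi{qr}$ raises the \emph{inner} genus, producing $\sur{\sur{\cycs{ab}}{0}}{1}[\emptyset]\equiv\sur{\sur{\cycs{ab}}{1}}{0}[\emptyset]$. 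From these base relations I would reach full generality by \emph{grafting}, exploiting that $\EuScript I$ is a congruence compatible with the modular operations: composing a base relation along an open input with an image $\iota_O\sur{p'\sfO}{g'}$ fattens a singleton cycle into an arbitrary multicycle carrying arbitrary genus (via the pancake merging and the additivity of genera in~(\ref{a})), while composing along an open input with an image $e\sur{\cycs{wx}\,\sfo_1\cdots}{0}[\zvedak C_0]$, using a connector with a spare open input $x$ to avoid creating empty cycles, simultaneously introduces arbitrary closed labels $C_0$ and additional spectator nests. The delicate bookkeeping of these grafting compositions, tracking which cycle absorbs the connector and how the genera combine, is the technical crux; once merging and genus transfer hold in full generality, the normal-form reduction, and hence the injectivity of $\ol\pi$, follow.

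Finally, the universal property in the second half is a formal consequence of~(\ref{Kdy se s tim prestanu trapit?}) and the adjunction of Proposition~\ref{THMModOCus}. Morphisms $\oQnsOC\cong\ModHybFun(\onsOC)/\EuScript I\to\oH$ are in bijection with morphisms $\tilde F:\ModHybFun(\onsOC)\to\oH$ annihilating $\EuScript I$, which under the adjunction correspond to morphisms $F=\tilde F\circ e:\onsOC\to\oH$ of cyclic hybrids. Now $\tilde F$ annihilates $\EuScript I$ precisely when it respects the generator, i.e. when $\tilde F\,e\sur{\cycs{q}\cycs{r}}{0}[\emptyset]=\tilde F\,\oxi{uv}e\sur{\cycs{uqvr}}{0}[\emptyset]$, the equality $\oxi{uv}e\sur{\cycs{uqvr}}{0}[\emptyset]=\sur{\sur{\cycs{q}\cycs{r}}{0}}{0}[\emptyset]$ being the same computation used for $\EuScript I\subseteq\ker\pi$. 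As $\tilde F$ is a morphism of modular hybrids it commutes with $\oxi{uv}$, and $\tilde F\circ e=F$, so this condition is exactly the Cardy relation~(\ref{EQCardyOCStatement}); the commuting triangle results, and uniqueness of $\hat F$ is inherited from the uniqueness clause of the adjunction.
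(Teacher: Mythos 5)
Your proposal is correct and takes essentially the same route as the paper's proof: the paper's mutually inverse maps $\alpha$, $\beta$ are your $\ol\pi$ and the normal-form assignment, and its three key identities — the fattened generator~(\ref{EQCardyOCOne}), nest merging~(\ref{EQCardyOCTwo}) and genus transfer~(\ref{EQCardyOCThree}) — are exactly your two reduction moves obtained by the same grafting mechanism (composing both sides of the generating relation with elements of $\oQnsO$ and of $\onsOC$, and applying contractions) that you identify as the technical crux. The minor presentational differences — constructing the projection from the universal property of Proposition~\ref{THMModOCus} rather than writing it down by hand, and deriving the genus-transfer base case by contracting a fattened generator directly instead of routing it through the merging relation — do not change the substance.
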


\begin{remark}
Equation~(\ref{EQCardyOCStatement}) is equivalent to
\[
\oxi{uv} \left( F\sur{\cycs{uqa}}{0}[\podpera\emptyset] \ooo{a}{b} 
F\sur{\cycs{bvr}}{0}[\podpera \emptyset] \right) = 
F\sur{\cycs{q}}{0}[\podpera\stt c] \ooo{c}{d} F\sur{\cycs{r}}{0}[\podpera\stt d]
\]
which says that $F$ preserves the Cardy
condition~(\ref{stale_jeste_v_Creswicku}). To see  it, use
that $F$, as a morphism of cyclic hybrids, commutes with
$\ooo ab$ and $\ooo cd$, and then  invoke the definition of the
$\circ$-operations in~$\oQnsOC$.
\end{remark}

\begin{proof}[Proof of Theorem 
\ref{PROCUnivProp}]
Let us consider a map 
$\alpha: \ModHybFun(\onsOC)/\, \EuScript{I} \to \oQnsOC$ given
by\footnote{We use the same notation for an element of
  $\ModHybFun(\onsOC)$ and its equivalence class. The meaning will
  always be clear from the context.}
\[
\sur{ \sur{\sfO_1}{g_1} \cdots \sur{\sfO_a}{g_a} 
}{\podpera g}[\zvedak C] \longmapsto \sur{\sfO_1 \cdots
  \sfO_a}{\velkyzvedak g+\sum_{i=1}^ag_i}
[\podpera C] \ 
\hbox { and } \
\sur{\varnothing}{g}[\podpera C] \mapsto \sur{\varnothing}{g}[\podpera C],
\]
and a map
$\beta: \oQnsOC \to  \ModHybFun(\onsOC) /\EuScript{I}$ given by
\[
\sur{\sfo_1 \cdots\, \sfo_b}{g}[\podpera C] \longmapsto
\sur{\sur{\sfo_1\cdots \, \sfo_b}{g}}{\podpera 0}[\podpera C]  \ 
\hbox { and } \
\sur{\varnothing}{g}[\podpera C] \longmapsto \sur{\varnothing}{g}[\podpera C].
\]
It is easy to check that $\alpha$ and $\beta$ are well-defined
morphisms of modular hybrids and that $\alpha\beta=\id$.
To verify $\beta\alpha=\id$, we have to check that
\begin{equation} 
\label{EQCardyOCZero}
\sur{\sur{\sfO_1}{g_1} \cdots \sur{\sfO_a}{g_a}}{\podpera g}[\zvedak C] 
= 
\sur{\sur{\sfO_1 \cdots\, \sfO_a}{\velkyzvedak g+\sum_{i=1}^a
    g_i}}{\zvedak 0}[\podpera C]
\ \hbox { in }\ \ModHybFun(\onsOC)/\, \EuScript{I}
\end{equation}
for $a\geq 1$; for $a=0$  is the claim trivial.
We start by showing that
\begin{equation} 
\label{EQCardyOCOne}
\sur{\sur{\cycs{p}}{0} \sur{\cycs{qr}}{0}}{\podpera 0}[\podpera C] 
= \sur{\sur{\cycs{p}\cycs{qr}}{0}}{\podpera 0}[\podpera C].
\end{equation}
To this end, we rewrite the left hand side  as
\begin{subequations}
\begin{equation}
\label{EQCardyOCOne1}
\sur{\sur{\cycs{p}}{0} \sur{\cycs{s'}}{0}}{\podpera 0}[\emptyset] \ooo{s'}{s''}
\sur{\sur{\cycs{qrs''}}{0}}{\podpera 0}[C]
\end{equation}
and apply the generating relation of $\EuScript{I}$ on the first term.
We obtain
\begin{equation}
\label{EQCardyOCOne2}
\sur{\sur{\cycs{p}\cycs{s'}}{0}}{\podpera 0}[\emptyset] \ooo{s'}{s''}
\sur{\sur{\cycs{qrs''}}{\podpera 0}}{0}[C]
\end{equation}
which equals the right hand side of~(\ref{EQCardyOCOne}).
\end{subequations}

As the second step of the proof we verify that
\begin{equation} 
\label{EQCardyOCTwo}
\sur{\sur{\sfO_1}{g_1} \sur{\sfO_2}{g_2} \cdots  
\sur{\sfO_a}{g_a}}{\mini g}[C] 
= \sur{\sur{\sfO_1\, \sfO_2}{g_1+g_2} \cdots \sur{\sfO_a}{g_a}}{\mini g}[C].
\end{equation}
Assume that $\sfO_1 = \sfo'_1 \sfo'_2 \cdots \sfo'_{b'}$,  
$\sfO_2 = \sfo''_1 \sfo''_2 \cdots \sfo''_{b''}$
and rewrite the left hand side as
\begin{equation}
\label{pisu_v_Koline}
\sur{\sur{\, p'\sfo'_1\, \sfo'_2\cdots\, \sfo'_{b'}}{g_1}}{\podpera 0}[\emptyset] 
\ooo{p'}{p''}
\sur{\sur{\, q'\sfo''_1\, \sfo''_2\cdots\, \sfo''_{b''}}{g_2}}{\podpera 0}[\emptyset] 
\ooo{q'}{q''}
\sur{\sur{\cycs{p''}}{\podpera 0} \sur{\cycs{q''r'}}{0}}{\podpera 0}[\emptyset] \ooo{r'}{r''}
\sur{\sur{\cycs{r''}}{\podpera 0}\sur{\sfO_3}{g_3} \cdots  \sur{\sfO_a}{g_a} }{g}[C].
\end{equation}
Applying~\eqref{EQCardyOCOne} to the third term, we get
\[
\sur{\sur{\, p'\sfo'_1\, \sfo'_2\cdots\, \sfo'_{b'}}{g_1}}{\podpera 0}[\emptyset] 
\ooo{p'}{p''}
\sur{\sur{\, q'\sfo''_1\, \sfo''_2\cdots\, \sfo''_{b''}}{g_2}}{\podpera 0}[\emptyset] 
\ooo{q'}{q''}
\sur{\sur{\cycs{p''}\cycs{q''r}}{0}}{\podpera 0}[\emptyset] \ooo{r'}{r''}
\sur{\sur{\cycs{r''}}{0}\sur{\sfO_3}{g_3} \cdots  \sur{\sfO_a}{g_a} }{\mini g}[C]
\]
which is easily seen to be the right hand side of
\eqref{EQCardyOCTwo}. Using~\eqref{EQCardyOCTwo} inductively we conclude
that the left hand side of~(\ref{EQCardyOCZero}) equals
\begin{equation}
\label{Vecer jdu do Pepikovy pokladny s Elizabetou.}
\sur{\sur{\sfO_1 \cdots\, \sfO_a}{\velkyzvedak
\sum_{i=1}^a g_i}}{\mini g}[\podpera C].
\end{equation}
The last step we need to prove that $\beta\alpha=\id$ is the equality
\begin{equation} 
\label{EQCardyOCThree}
\sur{\sur{\sfo_1\cdots\, \sfo_b}{g_1}}{g}[C] 
= \sur{\sur{\sfo_1\cdots\, \sfo_b}{g_1+1}}{g-1}[C].
\end{equation}
By the definition of the contractions in $\ModHybFun(\onsOC)$
its left hand side equals
\begin{equation}
\label{Jitka_mi_psala}
\oxi{p'p''}\sur{\sur{\cycs{p'}}{0}\sur{\, p'\sfo_1\,\sfo_2  \cdots\,
  \sfo_b}{g_1}}{\podpera g-1}[\zvedak C],
\end{equation}
which, by \eqref{EQCardyOCTwo}, is the same as
\[
\oxi{pp'}\sur{\sur{\cycs{p}\, p'\sfo_1\,\sfo_2  \cdots\,
  \sfo_b}{g_1}}{\podpera g-1}[\zvedak C],
\]
which is the right hand side
of~(\ref{EQCardyOCThree}). Applying~(\ref{EQCardyOCThree}) 
inductively, we see that~ (\ref{Vecer jdu do Pepikovy pokladny s
  Elizabetou.}) 
equals the right hand side of~(\ref{EQCardyOCZero}). 
This finishes the proof of $\beta\alpha=\id$ and
establishes~(\ref{Kdy se s tim prestanu trapit?}).

Let us prove the second part of the theorem. 
Denote by $\pi:\ModHybFun(\onsOC) \twoheadrightarrow \oQnsOC$ 
the natural projection and by
$\tF:\ModHybFun(\onsOC) \to \oH$ 
the unique 
extension of $F$ guaranteed by the universal property of the modular \envelope\.
Such an $\tF$ descends to $\hat{F}$ in the diagram
\[
\xymatrix@C4em@R3em{
\onsOC\ \ar@{^{(}->}[r] \ar_{F}[dr]  & 
\ModHybFun(\onsOC) \ar@{-->}^(.5){\tF}[d] \ar@{->>}^\pi[r] &
\oQnsOC  \ar@{-->}^{\hat{F}}[dl]
\\ &
\oH 
}
\]
if and only if  $\tF$ preserves the generating relation of
$\EuScript{I}$. But this is indeed so, since
\begin{align*}
\tF\sur{\sur{\cycs{q}}{0}\sur{\cycs{r}}{0}}{0}[\emptyset] 
 = F\sur{\cycs{q}\cycs{r}}{0}[\emptyset] 
= \oxi{uv}F\sur{\cycs{uqvr}}{0}[\emptyset]  
= \oxi{uv}\tF\sur{\sur{\cycs{uqvr}}{0}}{0}[\emptyset] 
= \tF\sur{\sur{\cycs{q}\cycs{r}}{0}}{0}[\emptyset]
\end{align*}
where the second equality used~(\ref{EQCardyOCStatement}). 
The uniqueness of $\hat{F}$ 
follows from the uniqueness of $\tF$ and the surjectivity of
$\pi$. This finishes the proof of Theorem \ref{PROCUnivProp}.
\end{proof}

\subsection{Modular \envelope\ of premodular hybrids.}

One has the functor 
\begin{equation}
\label{zitra s Martinem Bordermanem na obed}
\square_\pre: \ModHyb \to \PreHyb
\end{equation}
from the category of modular hybrids to the category of premodular hybrids
which
forgets all contractions in the `closed' color and  contractions 
$\oxi{uv}$ in the `open' color for which $u$ and $v$ belong to
{\em different\/} cycles. In this situation there is another version of the
modular \envelope\ functor, namely the left adjoint
\[
\ModPreFun_\pre : \PreHyb \to \ModHyb 
\]
to~(\ref{zitra s Martinem Bordermanem na obed}). We have 

\begin{theorem} 
\label{THMPositiveModEnvOC}
For the premodular hybrid $\onsOCpre$ from Example~\ref{h2} one has
the isomorphism
\begin{equation}
\label{Peter_Cushing}
\ModPreFun_\pre(\onsOCpre) \cong \oQnsOC
\end{equation}
of modular hybrids.
\end{theorem}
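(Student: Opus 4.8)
The plan is to deduce the statement from Theorem~\ref{PROCUnivProp} by comparing universal properties, rather than by re-running the long computation of Proposition~\ref{THMModOCus}. Recall that $\ModPreFun_\pre$ is left adjoint to the forgetful functor $\square_\pre:\ModHyb\to\PreHyb$, so $\ModPreFun_\pre(\onsOCpre)$ corepresents the functor $\oH\mapsto\Hom_{\PreHyb}(\onsOCpre,\square_\pre\oH)$. On the other side, Theorem~\ref{PROCUnivProp} exhibits $\oQnsOC\cong\ModHybFun(\onsOC)/\EuScript{I}$ and, through its second half, shows that $\oQnsOC$ corepresents the functor sending $\oH$ to the set of cyclic-hybrid morphisms $F:\onsOC\to\oH$ satisfying the Cardy relation~(\ref{EQCardyOCStatement}). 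By the Yoneda lemma it therefore suffices to produce a bijection, natural in the modular hybrid $\oH$, between $\Hom_{\PreHyb}(\onsOCpre,\square_\pre\oH)$ and the set of Cardy-preserving cyclic-hybrid morphisms $\onsOC\to\oH$.

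First I would construct the forward map. A premodular-hybrid morphism $G:\onsOCpre\to\square_\pre\oH$ commutes with the $\circ$-operations and with the same-pancake contractions, so forgetting the contractions yields a cyclic-hybrid morphism $F:\onsOC\to\oH$ on the same underlying collection. Since, by Example~\ref{h2}, the defining premodular structure of $\onsOCpre$ realises the equality $\oxi{uv}\sur{\cycs{uqvr}}{0}[\emptyset]=\sur{\cycs{q}\cycs{r}}{0}[\emptyset]$ as a genuine same-pancake contraction, the compatibility of $G$ with this very contraction is exactly the Cardy relation~(\ref{EQCardyOCStatement}) for $F$. Conversely, given a Cardy-preserving $F$, I would take the same underlying collection map and must verify that it is a morphism of premodular hybrids; as it already respects the $\circ$-operations, the only thing left to check is that it commutes with every same-pancake open contraction.

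This last verification is the key lemma and the main obstacle. I claim that the single relation~(\ref{EQCardyOCStatement}) forces $F$ to commute with all same-pancake contractions $\oxi{uv}$. The argument is a two-step reduction exploiting that $F$ respects $\circ$ and that, inside the target modular hybrid $\oH$, contractions commute with compositions performed on disjoint inputs. Given a general element $\sur{\sfo_1\cdots\sfo_b}{0}[C]$ with $u,v$ lying in the same cycle, say $\sfo_1$, one first peels off the spectator cycles and closed inputs: writing the element as $\sur{\sfo_1'}{0}[\emptyset]\ooo{w'}{w}Y$ for a one-tooth extension $\sfo_1'$ of $\sfo_1$ and a suitable $Y\in\onsOC$ carrying $\sfo_2,\dots,\sfo_b$ and $C$, the contraction $\oxi{uv}$ slides past $\ooo{w'}{w}$, reducing the claim to the single-cycle case $\sur{\sfo_1}{0}[\emptyset]$. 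One then reduces an arbitrary cycle $\sfo_1=\cycs{u\,\mathbf{y}\,v\,\mathbf{z}}$ to the Cardy configuration $\cycs{uqvr}$ by pancake merging: the segments $\mathbf{y}$ and $\mathbf{z}$ are produced by merging the teeth $q$ and $r$ of $\cycs{uqvr}$ with auxiliary cycles, while an empty segment is produced by merging with the one-tooth pancake $\sur{\cycs{p}}{0}[\emptyset]$, which simply deletes the corresponding tooth. Applying Cardy to the distinguished factor $\cycs{uqvr}$ and using that $F$ commutes with $\circ$ then yields $F(\oxi{uv}\sfo_1)=\oxi{uv}F(\sfo_1)$, and unwinding the two reductions gives the general identity.

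Finally, these two assignments are visibly mutually inverse, each being the identity on the common underlying collection map, and natural in $\oH$, so they constitute the required natural isomorphism of corepresented functors and hence an isomorphism $\ModPreFun_\pre(\onsOCpre)\cong\oQnsOC$ of modular hybrids. The adjunction bookkeeping is routine; essentially all the content sits in the key lemma, that is, in showing that the lone Cardy relation already propagates, through the cyclic composition structure, to compatibility with every same-pancake contraction.
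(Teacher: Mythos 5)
Your proof is correct, but it takes a genuinely different route from the paper's, and it does strictly more work than necessary. The paper's proof is one-directional: given a premodular morphism $F:\onsOCpre\to \square_\pre(\oH)$, it observes (exactly as in your forward map) that compatibility with the same-pancake contraction realizing $\oxi{uv}\sur{\cycs{uqvr}}{0}[\emptyset]=\sur{\cycs{q}\cycs{r}}{0}[\emptyset]$ is precisely the Cardy relation~(\ref{EQCardyOCStatement}), and then feeds the underlying cyclic-hybrid morphism into Theorem~\ref{PROCUnivProp} to get the unique modular extension; this already verifies the universal property of $\ModPreFun_\pre(\onsOCpre)$ for $\oQnsOC$ directly (commutativity over $\onsOCpre$ and uniqueness both follow from the cyclic-level statement, since the relevant maps agree on underlying collections). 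In particular the paper never needs your ``key lemma,'' i.e.\ the converse implication that a Cardy-preserving cyclic morphism commutes with \emph{every} same-pancake contraction. Your Yoneda-style comparison of corepresented functors does need it, and your combinatorial proof of it is correct: peeling off spectator pancakes and closed inputs via $x=\sur{w'\sfo_1}{0}[\emptyset]\ooo{w'}{w}Y$ is legitimate (the contraction slides past $\ooo{w'}{w}$ by the modular-hybrid axioms, applied on both sides of the desired identity), and the reduction of $\cycs{u\,\mathbf{y}\,v\,\mathbf{z}}$ to the Cardy configuration $\cycs{uqvr}$ by merging $q$ and $r$ with auxiliary pancakes (one-tooth pancakes when a segment is empty) works in $\onsOC$, where no stability constraint interferes with the auxiliary factors. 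What each approach buys: the paper's is a three-line deduction; yours is longer but establishes, as a byproduct, the independently interesting fact that the single relation~(\ref{EQCardyOCStatement}) propagates to all same-pancake contractions already at the level of cyclic morphisms into any modular hybrid. Note, however, that even within your framework this lemma comes for free from Theorem~\ref{PROCUnivProp}: the modular extension $\hat F$ of a Cardy-preserving $F$ commutes with all contractions and restricts back to $F$ on $\onsOCpre$, so you could have had your backward map --- and hence the bijection --- without any combinatorics.
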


\begin{remark}
Notice that the Cardy condition~(\ref{stale_jeste_v_Creswicku}) is
already built in $\onsOCpre$, so we do not need to take
in~(\ref{Peter_Cushing}) the quitient by it.
\end{remark}

\begin{proof}[Proof of Theorem~\ref{THMPositiveModEnvOC}]
We need to verify that for a arbitrary modular hybrid
$\oH$ and for any morphism 
$F:\onsOCpre\to \square_\pre(\oH)$ of premodular hybrids, there is a unique morphism
$\tilde{F}:\oQnsOC\to \oH$ of modular hybrids
such that the diagram
\[
\xymatrix@C4em{
\onsOCpre\ \ar@{^{(}->}[r] \ar_{F}[dr]  & 
\oQnsOC \ar@{-->}^(.5){\hat{F}}[d]
\\ &
\oH 
}
\]
commutes.
Since $F$ is a morphism of premodular hybrids, 
it automatically satisfies relation~(\ref{EQCardyOCStatement}), because
\[
\oxi{uv}F\sur{\cycs{uqvr}}{0}[\emptyset] =
F\left(\oxi{pp'}\sur{\cycs{uqvr}}{0}[\emptyset]\right) =
F\sur{\cycs{q}\cycs{r}}{0}[\emptyset].
\]
If we forget the partially defined contractions in $\onsOCpre$, $F$ 
becomes a morphism of cyclic hybrids so it extends, 
by Theorem~\ref{PROCUnivProp}, into a unique morphism $\tF$ of modular
hybrids that makes the above diagram commutative.
\end{proof}

\section{Modular \envelope\ of a suboperad}
\label{Dnes_se_pojedu_podivat_do_Ribeauville}

The central technical result of this article,
Proposition~\ref{THMModOCus} of the previous section,  
describes the modular \envelope\
$\ModHybFun(\onsOC)$ of the modular hybrid $\onsOC$. We need
a similar result also for the \KaPe\ and stable subhybrids
\begin{equation}
\label{vcera_jsem_jel_na_brsulickach}
\onsOC_\KP \hookrightarrow \onsOCst \hookrightarrow \oOC, 
\end{equation}
but we  do not want to repeat the long technical proof of
Proposition~\ref{THMModOCus} for them.
We prove instead that the morphisms
\[
\ModHybFun(\onsOC_\KP) 
\to \ModHybFun(\onsOCst) \to \ModHybFun(\onsOC)
\]
of modular hybrids induced by the 
inclusions~(\ref{vcera_jsem_jel_na_brsulickach}) are injective and 
describe explicitly
the modular \envelope{s} $\ModHybFun(\onsOC_\KP)$ and $
\ModHybFun(\onsOCst)$ as subhybrids of $\ModHybFun(\onsOC)$.

The content of this section will therefore be some results about the
induced maps between modular \envelope{s}.   
To save the reader from unnecessary technicalities, we formulate and prove them
only for the `classical' cyclic operads and the `classical' modular
\envelope\ functor \hbox{$\Mod : \CycOp \to \ModOp$}
of~\cite[page~382]{markl:la}. 
It will be clear that obvious analogs of these results hold also for
non-$\Sigma$ cyclic operads and cyclic hybrids.

Let thus $\oC$ be a cyclic operad and $\oB \subset \oC$ its
cyclic suboperad. We are going to investigate the induced map
$\varpi : \Mod(\oB) \to \Mod(\oC)$. The following example shows that,
in some situations, $\varpi$ need not be a monomorphism.

\begin{example}
\label{Chtel_jsem_jet_na_bruslicky_ale_prsi.}
Let $\oC$ be the free cyclic operad generated by the two-point set 
\[
\big\{(u,v),(v,u)\big\} \subset
\oC\big(\stt {u,v}\big)
\] 
with the obvious action of the group
$\Aut\big(\stt {u,v}\big)$.
Denote by and $\oB \subset
\oC$ the cyclic suboperad consisting of $\circ$-compositions of at
least two elements of $\oC$. Let~finally 
\[
a := (u,x')\ooo {x'}{x''}
(v,x'') \in \oB\big(\stt {u,v}\big)
\ \hbox { and } \ b := (x',u)\ooo{x'}{x''}(x'',v) \in \oB\big(\stt {u,v}\big).
\] 
It follows from the axioms of
cyclic operads that
\begin{align*}
\oxi{uv}(a)& = \oxi{uv}\big( (u,x')\ooo {x'}{x''} (v,x'')\big) =  
\oxi{x'x''}\big( (u,x')\ooo {u}{v} (v,x'')\big)
\\
& \hskip 3em  = \oxi{uv}\big((x',u) \ooo{x'}{x''}(x'',v)\big) = \oxi{uv}(b)
\end{align*}
in $\Mod(\oC)$, 
while it is simple to check that $\oxi{uv}(a)\not = \oxi{uv}(b)$
in $\Mod(\oB)$. So the induced map $\varpi : \Mod(\oB) \to \Mod(\oC)$ is not a
monomorphism.

The main idea of the example can be illustrated as follows. Represent the
generator of $\oC$ by the arrow
\begin{center}
\scalebox{1}
{
\begin{pspicture}(0,-0.18296875)(2.6428125,0.1296875)
\psline[linewidth=0.04cm,arrowsize=0.1291667cm 2.0,arrowlength=1.4,arrowinset=0]{<-}(0.2809375,-0.11546875)(1.7809376,-0.11546875)
\rput(0.1,-0.11){$u$}
\rput(2,-0.11){$v$}
\end{pspicture} 
}
\end{center}
pointing from $v$ to $u$. In this graphical representation,
\begin{center}
\scalebox{1} 
{
\begin{pspicture}(0,-0.23296875)(9.942813,0.23296875)
\psline[linewidth=0.04cm,arrowsize=0.1291667cm 2.0,arrowlength=1.4,arrowinset=0]{<-}(1.6809375,-0.0)(3.1809375,-0.0)
\psline[linewidth=0.04cm,arrowsize=0.1291667cm 2.0,arrowlength=1.4,arrowinset=0]{<-}(3.6809375,-0.0)(2.0809374,-0.0)
\rput(1.437,0){$u$}
\rput[b](3.9,-0.2){$\ v\ ,$}
\rput[b](0.42234376,-0.1){$a = $}
\rput[b](5.5,-0.1){$b = $}
\rput(-.5,0){
\rput(6.9,-0){$u$}
\rput[b](9.45,-.20){$\ v\ ,$}
\psline[linewidth=0.04cm,arrowsize=0.1291667cm 2.0,arrowlength=1.4,arrowinset=0]{<-}(8.180938,-0)(7.1809373,-0)
\psline[linewidth=0.04cm,arrowsize=0.1291667cm
  2.0,arrowlength=1.4,arrowinset=0]{<-}(8.180938,-0)(9.180938,-0)
}
\end{pspicture} 
}
\end{center}
so we have in $\Mod(\oB)$
\begin{center}
\scalebox{1}
{
\begin{pspicture}(0,-0.07)(11.300938,1.25)
\psline[linewidth=0.04cm,arrowsize=0.1291667cm 2.0,arrowlength=1.4,arrowinset=0]{<-}(2.4809375,0.95)(3.9809375,0.95)
\psline[linewidth=0.04cm,arrowsize=0.1291667cm 2.0,arrowlength=1.4,arrowinset=0]{<-}(4.4809375,0.95)(2.8809376,0.95)
\psline[linewidth=0.04cm,arrowsize=0.1291667cm 2.0,arrowlength=1.4,arrowinset=0]{<-}(9.680938,0.95)(8.680938,0.95)
\psline[linewidth=0.04cm,arrowsize=0.1291667cm 2.0,arrowlength=1.4,arrowinset=0]{<-}(9.680938,0.95)(10.680938,0.95)
\psellipse[linewidth=0.03,linestyle=dashed,dash=0.16cm 0.16cm,dimen=outer](9.680938,0.95)(1.1,0.3)
\psellipse[linewidth=0.03,linestyle=dashed,dash=0.16cm 0.16cm,dimen=outer](3.4809375,0.95)(1.0,0.3)
\psarc[linewidth=0.04](4.4809375,0.45){0.5}{270.0}{90.0}
\psarc[linewidth=0.04](10.780937,0.45){0.5}{270.0}{90.0}
\psarc[linewidth=0.04](8.580937,0.45){0.5}{90.0}{270.0}
\psarc[linewidth=0.04](2.4809375,0.45){0.5}{90.0}{270.0}
\psline[linewidth=0.04cm](2.4809375,-0.05)(4.4809375,-0.05)
\psline[linewidth=0.04cm](8.580937,-0.05)(10.780937,-0.05)
\rput(0.8,0.46){$\circ_{uv}(a)=$}
\rput[t](5.2,0.46){,}
\rput(11.5,0.46){.}
\rput(6.913906,0.46){$\circ_{uv}(b)=$}
\end{pspicture} 
}
\end{center}
The dashed ovals indicate that the arrows representing the generators
cannot be separated in $\Mod(\oB)$. The ovals however
can be erased in $\Mod(\oC)$ and the arrowheads 
moved around bringing both pictures in the above display into
\begin{center}
\scalebox{1} 
{
\begin{pspicture}(0,0.08)(2.72,1.12)
\psline[linewidth=0.04cm,arrowsize=0.1291667cm 2.0,arrowlength=1.4,arrowinset=0]{<-}(1.3,1.1)(2.2,1.1)
\psarc[linewidth=0.04](2.2,0.6){0.5}{270.0}{90.0}
\psarc[linewidth=0.04](0.5,0.6){0.5}{90.0}{270.0}
\psline[linewidth=0.04cm](0.4,0.1)(1.5,0.1)
\psline[linewidth=0.04cm,arrowsize=0.1291667cm 2.0,arrowlength=1.4,arrowinset=0]{<-}(1.3,0.1)(2.2,0.1)
\psline[linewidth=0.04cm](0.4,1.1)(1.5,1.1)
\rput(3,0.6){.}
\end{pspicture} 
}
\end{center}
\end{example}

The central technical result of this section reads

\begin{proposition} 
\label{THMSuffCondOnInjectivity}
Let $\oB\subset\oC$ be cyclic operads. Assume that
for every $w' \in\oC\big(\stt{p',q'} \sqcup R\big)$ and 
$w''\in\oC\big(\stt{p'',q''} \sqcup
S\big)$ such that $w'\ooo{q'}{q''}w''\in\oB\big(\stt{p',p''} \sqcup R \sqcup
S\big)$ either 
\begin{itemize}
\item[(i)]
there is a bijection $\rho: \stt{p',p''} \sqcup R \sqcup S \to 
\stt{q',q''} \sqcup R \sqcup S$ fixing $R \sqcup S$ such that 
\[
w'\ooo{p'}{p''}w'' = \rho(w'\ooo{q'}{q''}w''),
\]
\item[(ii)]
 or there are $w'_1\in\oB(\stt{p',q'} \sqcup R)$ 
and $w''_1\in\oB(\stt{p'',q''} \sqcup S)$ 
such that 
\[
w'\ooo{p'}{p''}w'' = w'_1\ooo{p'}{p''}w''_1 \
\hbox { and } \ w' \ooo{q'}{q''}w'' = w'_1\ooo{q'}{q''}w''_1.
\]
\end{itemize}
Then the induced map $\varpi : \Mod(\oB) \to \Mod(\oC)$ is injective.
\end{proposition}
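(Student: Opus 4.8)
The plan is to work with the explicit \emph{rose} presentation of the modular \envelope. Recall that $\Mod(\oP)$ is presented by $\oP$-decorated genus-labelled graphs, modulo graph isomorphism and the relation that identifies the contraction of an internal (non-loop) edge with the $\circ$-composition of the two decorations it joins. Using the $\circ$-operations to collapse a spanning tree of a (connected) graph, every element of $\Mod(\oP)(S;G)$ acquires a single-vertex representative
\[
\oxi{u_1 v_1}\cdots \oxi{u_g v_g}(x), \qquad x \in \oP\big(S \sqcup \{u_1,v_1,\ldots,u_g,v_g\}\big),
\]
a vertex $x$ carrying $g$ loops; I shall call these $\oP$-roses, and an $\oB$-rose when $x\in\oB$. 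First I would record the combinatorial fact that the equivalence relation identifying two roses in $\Mod(\oP)$ is generated by (a) relabelings of the internal half-edges (equivariance) and (b) the elementary \emph{edge-swap}
\[
\oxi{uv}\big( w'\ooo{a'}{a''}w''\big) = \oxi{a'a''}\big( w'\ooo{u}{v}w''\big),
\]
with $u$ an input of $w'$, $v$ an input of $w''$ and all remaining loops kept as spectators; this is precisely the axiom illustrated in Example~\ref{Chtel_jsem_jet_na_bruslicky_ale_prsi.}, applied to single-vertex graphs. Verifying that (a) and (b) generate the \emph{entire} relation on roses --- i.e.\ that expanding a rose along a $\circ$-decomposition and recontracting a different edge never yields an identification outside the span of edge-swaps --- is the one genuinely combinatorial point, and I expect it to be \emph{the main obstacle}; everything afterwards is bookkeeping.

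The key reduction is then the following. Granting (a) and (b), I claim that every edge-swap whose source is an $\oB$-rose has its target again an $\oB$-rose. Indeed, if the source decoration $x = w'\ooo{a'}{a''}w''$ lies in $\oB$, then putting $q':=a'$, $q'':=a''$, $p':=u$, $p'':=v$ and absorbing the spectator legs and spectator loop-half-edges into the finite sets $R,S$, the pair $(w',w'')$ satisfies exactly the hypothesis $w'\ooo{q'}{q''}w''\in\oB$ of the Proposition. In case~(i) the target decoration $w'\ooo{u}{v}w'' = \rho(x)$ is a relabeling of $x$, hence lies in $\oB$ since $\oB$ is closed under the action of bijections; in case~(ii) it equals $w_1'\ooo{u}{v}w_1''$ with $w_1',w_1''\in\oB$, hence lies in $\oB$ since $\oB$ is a suboperad, closed under $\circ$. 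Either way the target is an $\oB$-rose. Together with the obvious stability of $\oB$-roses under relabelings, this shows that the set of $\oB$-roses is \emph{closed} under all generating moves of the $\Mod(\oC)$-equivalence.

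Next I would upgrade closure to realizability: each such move between two $\oB$-roses is already a move of the $\Mod(\oB)$-equivalence. In case~(i), equivariance of the contractions gives $\oxi{q'q''}(\rho(x)) = \oxi{p'p''}(x)$ once the relabeling of the (quotiented) internal half-edges is discarded, so source and target coincide in $\Mod(\oB)(S;G)$. In case~(ii), the $\oB$-decomposition $x = w_1'\ooo{q'}{q''}w_1''$ furnishes a bona fide $\oB$-edge-swap
\[
\oxi{p'p''}\big(w_1'\ooo{q'}{q''}w_1''\big) = \oxi{q'q''}\big(w_1'\ooo{p'}{p''}w_1''\big),
\]
whose target decoration $w_1'\ooo{p'}{p''}w_1''$ is exactly the target decoration of the original move. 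Hence every generating $\Mod(\oC)$-move relating two $\oB$-roses is realized inside $\Mod(\oB)$.

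Finally I would conclude. Given $\oB$-roses $r_1,r_2$ with $\varpi(r_1)=\varpi(r_2)$, choose a zigzag of generating moves connecting them in $\Mod(\oC)$. By closure every intermediate term is again an $\oB$-rose, and by the previous step every individual step is a valid $\Mod(\oB)$-identification; therefore $r_1=r_2$ already in $\Mod(\oB)$. Since every class in $\Mod(\oB)$ is represented by an $\oB$-rose, this establishes injectivity of $\varpi$. I would remark that the whole argument is local in the edge-swap and so transfers verbatim to the \ns and hybrid settings required for the inclusions in~(\ref{vcera_jsem_jel_na_brsulickach}), as the excerpt anticipates.
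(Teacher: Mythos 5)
Your proposal follows essentially the same route as the paper's own proof: normalize every element of the modular \envelope\ to a single-vertex (rose) representative, describe the identifications in $\Mod(\oC)$ by a generating set of moves, use hypotheses (i)--(ii) to show that $\oB$-decorated roses are closed under these moves, and then realize each move inside $\Mod(\oB)$. Your closure argument and, in particular, your explicit realizability step are correct; on that last point you are in fact more careful than the paper, which reduces everything to the closure statement ``$u\in\oB$ implies $v\in\oB$'' and leaves tacit that in case~(i) the identification $\oxi{p'p''}(u)=\oxi{q'q''}(\rho(u))$ holds in $\Mod(\oB)$ by equivariance, while in case~(ii) it is an edge-swap performed along the $\oB$-decomposition $w_1'\ooo{q'}{q''}w_1''$.

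The genuine gap is exactly where you flagged it, and it is not mere bookkeeping: the claim that relabelings and edge-swaps generate the \emph{entire} identification of roses is the content of the paper's Proposition~\ref{PROOperationsInModEnv}, and without it your quantification ``every generating $\Mod(\oC)$-move'' has no basis. The paper closes this as follows. Elements of the free modular operad $\Free(\oC)$ are decorated graphs; contracting a decorated graph $x$ along a maximal subtree $T$ of its underlying graph $G(x)$ yields a rose $C_T(x)$, and the labelling ambiguities account for your moves of type (a). For two different maximal subtrees $T_1,T_2$ one invokes Ore's theorem \cite{ore}: $T_1$ and $T_2$ are connected by a chain of ``singular cyclic interchanges,'' i.e.\ there is a subgraph $H$ with exactly one cycle and two edges $e_1,e_2$ on that cycle such that $T_1=H\setminus\{e_2\}$ and $T_2=H\setminus\{e_1\}$. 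Contracting the two trees constituting $H\setminus\{e_1,e_2\}$ leaves a two-vertex graph, and the interchange law between contractions and $\circ$-operations then shows that $C_{T_1}(x)$ and $C_{T_2}(x)$ differ by precisely one edge-swap. You would need this argument, or an equivalent rewriting/confluence argument, to make your proof complete; everything downstream of it in your proposal is sound.
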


The assumption of Proposition~\ref{THMSuffCondOnInjectivity} is in
Example~\ref{Chtel_jsem_jet_na_bruslicky_ale_prsi.} violated by
$w' := (p',q') \in \oC\big( \{p',q'\} \big)$ and 
$w'' := (p'',q'') \in \oC\big( \{p'',q''\} \big)$, $S = R := \emptyset$.
Proposition~\ref{THMSuffCondOnInjectivity} will follow from 
Proposition~\ref{PROOperationsInModEnv} whose formulation and proof we
postpone to the end of this section. We will also need

\begin{definition}
Let $\oB$ be a cyclic suboperad of a modular operad $\oC$.
The \emph{$\xi$-closure} of $\oB$ in $\oC$ is defined as
\[
\xi_{\oC}(\oB) := \big\{\oxi{p'_1p''_1}\cdots\oxi{p'_np''_n}(x) \in \oC\ \big|\ 
n\in\bbN,\ x\in\oB,\ p'_1,p''_1,\ldots,p'_n,p''_n
\textrm{ are some inputs of }x \big\}.
\]
\end{definition}

The terminology 
is inspired by the old-fashioned notation $\xi_{uv}$ for $\oxi{uv}$.
Notice that $\xi_{\oC}(\oB)$ 
is the smallest modular suboperad of $\oC$ containing $\oB$, so it is
a modular \envelope\ of $\oB$ {\em in\/}~$\oC$ or {\em relative\/} to
$\oC$. From this point of view, $\Mod(\oB)$ is the {\em absolute\/}
modular completion of $\oB$. 
The following statement describes a situation when absolute and
relative completions~agree.

\begin{proposition} 
\label{CorConseqInjOnMods}
If $\oB\subset\oC$ are cyclic operads such 
that the map $\varpi : 
\Mod(\oB)\to\Mod(\oC)$ is injective, then 
\[
\Mod(\oB)
\cong  \xi_{\Mod(\oC)}(\oB) .
\]
In particular, $\Mod(\oB)
\cong  \xi_{\Mod(\oB)}(\oB)$.
\end{proposition}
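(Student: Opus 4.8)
The plan is to deduce the isomorphism from a single structural fact, namely that the modular \envelope\ $\Mod(\oB)$ is generated from the image of $\oB$ using contractions alone. I would first establish the ``in particular'' clause $\Mod(\oB)\cong\xi_{\Mod(\oB)}(\oB)$, and then obtain the general statement cheaply by transporting it along $\varpi$ and invoking injectivity.

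The first step is to check that $\xi_{\Mod(\oB)}(\oB)$ is a modular suboperad of $\Mod(\oB)$. Closure under contractions is immediate from the definition. Closure under the $\circ$-operations is where the modular axioms enter: for $x,y\in\oB$ the compatibility of contractions with $\circ$-compositions gives
\[
\oxi{p'_1p''_1}\cdots(x) \ooo uv \oxi{q'_1q''_1}\cdots(y)
= \oxi{p'_1p''_1}\cdots\oxi{q'_1q''_1}\cdots(x\ooo uv y),
\]
since the contracted inputs are disjoint from the composed pair $u,v$, and $x\ooo uv y\in\oB$ because $\oB$ is a cyclic operad. Thus the right-hand side again lies in $\xi_{\Mod(\oB)}(\oB)$.

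Next I would argue, via the universal property, that $\oB$ generates $\Mod(\oB)$ as a modular operad. Let $\oM\subseteq\Mod(\oB)$ be the smallest modular suboperad containing the image of the unit $e_{\oB}:\oB\to\Mod(\oB)$, with inclusion $\iota:\oM\hookrightarrow\Mod(\oB)$. Then $e_{\oB}$ corestricts to a cyclic morphism $\oB\to\square(\oM)$, which by the adjunction $\Mod\dashv\square$ extends to a modular morphism $f:\Mod(\oB)\to\oM$ satisfying $\square(\iota\circ f)\circ e_{\oB}=e_{\oB}$. By the uniqueness clause of the universal property applied to $\id_{\Mod(\oB)}$, we get $\iota\circ f=\id_{\Mod(\oB)}$, so $\iota$ is onto and $\oM=\Mod(\oB)$. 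Since $\xi_{\Mod(\oB)}(\oB)$ is a modular suboperad containing $e_{\oB}(\oB)$, minimality forces $\xi_{\Mod(\oB)}(\oB)=\Mod(\oB)$, proving the ``in particular'' clause.

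Finally, for the main assertion let $j:\oB\hookrightarrow\oC$ be the given inclusion. By naturality of the unit, $\varpi\circ e_{\oB}=e_{\oC}\circ j$, and $\varpi$ commutes with contractions; hence applying $\varpi$ to the elements $\oxi{p'_1p''_1}\cdots(e_{\oB}(x))$ of $\Mod(\oB)$ yields
\[
\varpi\bigl(\Mod(\oB)\bigr)=\varpi\bigl(\xi_{\Mod(\oB)}(\oB)\bigr)
=\xi_{\Mod(\oC)}(\oB)
\]
as modular suboperads of $\Mod(\oC)$, where $\oB$ is identified with its copy $e_{\oC}(\oB)$. Because $\varpi$ is injective by hypothesis, it restricts to an isomorphism onto its image, giving $\Mod(\oB)\cong\xi_{\Mod(\oC)}(\oB)$; the ``in particular'' clause reappears as the case $\oC=\oB$, where $\varpi=\id$. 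I expect the only real obstacle to be the generation step, pinning down that $\oB$ generates $\Mod(\oB)$ modularly, since everything else is bookkeeping with the axioms. The universal-property argument above avoids any explicit combinatorial model of $\Mod(\oB)$; the alternative of collapsing a spanning tree of a decorated graph to a single $\oB$-vertex carrying self-contractions would also prove generation but demands more care with the construction of the \envelope.
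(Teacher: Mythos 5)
Your proof is correct, but it runs in the opposite logical direction from the paper's, so the two are worth contrasting. The paper proves the relative statement directly: applying the universal property of $\Mod(\oB)$ to the inclusion $\oB \hookrightarrow \xi_{\Mod(\oC)}(\oB)$ yields a morphism $i:\Mod(\oB)\to\xi_{\Mod(\oC)}(\oB)$ whose composite with $\xi_{\Mod(\oC)}(\oB)\hookrightarrow\Mod(\oC)$ is $\varpi$; injectivity of $i$ is inherited from the hypothesis on $\varpi$, surjectivity follows because the image of $i$ is a modular suboperad containing $\oB$ and the $\xi$-closure is the smallest such, and the ``in particular'' clause then drops out as the special case $\oC=\oB$, $\varpi=\id$. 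You instead prove the absolute clause $\Mod(\oB)\cong\xi_{\Mod(\oB)}(\oB)$ first, via a generation argument (the retraction $\iota\circ f=\id_{\Mod(\oB)}$ extracted from the adjunction $\Mod\dashv\square$), and then obtain the relative statement by transporting along the injective $\varpi$, computing its image element-wise from the fact that $\varpi$ commutes with contractions and with the units. Both routes rest on the same two pillars---the universal property and the minimality of the $\xi$-closure---but your organization has one genuine advantage: your first step actually verifies, through the interchange law between contractions and $\circ$-operations, that the $\xi$-closure is closed under compositions and hence is a modular suboperad, a fact the paper only asserts in the remark preceding the proposition. The paper's version is in turn more economical, needing a single application of the universal property and no element-wise description of $\Mod(\oB)$, whereas your image computation quietly re-proves (the existence half of) what the paper later establishes combinatorially in Proposition~\ref{PROOperationsInModEnv}.
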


\begin{proof}
By the universal property of 
$\Mod(\oB)$ applied to the inclusions $\oB \hookrightarrow
\xi_{\Mod(\oC)}(\oB)$ 
and $\oB\hookrightarrow \Mod(\oC)$, there is a modular operad morphism 
$i:\Mod(\oB)\to  \xi_{\Mod(\oC)}(\oB)$ such that the~diagram
\[
\xymatrix@C1em{\Mod(\oB)  \ar[rr]^{\varpi}\ar^i[rd] &&  \Mod(\oC)
\\
&\ \xi_{\Mod(\oC)}(\oB) \  \ar@{^{(}->}[ru]  &
}
\]
commutes.
Since $\varpi: 
\Mod(\oB) \to \Mod(\oC)$ is injective by assumption, so is $i$.
As $\xi_{\Mod(\oC)}(\oB)$ is the smallest  modular suboperad 
of $\Mod(\oC)$ containing $\oB$, $i$ must be an isomorphism. The
second isomorphism of the proposition is the particular case when
$\varpi$ is the identity morphism $\id : \oB \to \oB$.
\end{proof}

Proposition~\ref{THMSuffCondOnInjectivity} is a consequence of

\begin{proposition} 
\label{PROOperationsInModEnv}
Every element $x\in\Mod(\oC)$ in the modular \envelope\ of a cyclic operad $\oC$ 
is of the~form 
\begin{equation}
\label{Vcera_na_dvou_koncertech_a_kdyz_nebude_prset_pujdu_dnes_zase.}
x=\oxi{p'_1p_1''}\cdots\oxi{p'_np_n''} (y)
\end{equation}
where $y\in\oC$ and $ p'_1,p''_1,\ldots,p'_n,p''_n$, $n\in \bbN$, 
are some of its inputs.
On elements in this form, consider the following `moves:'
\begin{enumerate}
\item[(i)] 
Let $w'\in\oC\big(\stt{p',q'} \sqcup R\big)$ 
and $w''\in\oC\big(\stt{p'',q''} \sqcup S\big)$.
Then replace 
\[
\oxi{p'_1p_1''}\cdots\oxi{p'_{n-1}p''_{n-1}}\oxi{p'p''}(w'\ooo{q'}{q''}w'')
\ \hbox { by } \
\oxi{p'_1p_1''}\cdots\oxi{p'_{n-1}p''_{n-1}}\oxi{q'q''}(w'\ooo{p'}{p''}w'').
\]
\item[(ii)] 
Let  $p',p'', q',q''$ be some of the inputs of $y$ and $\rho$ a bijection
mapping $p',p''$ to $q',q''$ in this order which restricts to the
identity on the remaining inputs of $y$.  
Then replace 
\[
\oxi{p_1'p_1''}\cdots\oxi{p_{n-1}'p''_{n-1}}\oxi{p'p''}(y) 
\ \hbox { by } \
\oxi{p_1'p_1''}\cdots\oxi{p_{n-1}'p''_{n-1}}\oxi{q'q''} (\rho y) .
\]
\item[(iii)]  
For an arbitrary permutation $\sigma \in \Sigma_n$ replace
\[
\oxi{p_1'p_1''}\cdots\oxi{p'_np_n''}(y) 
\ \hbox { by } \
\oxi{p'_{\sigma(1)}p''_{\sigma(1)}}\cdots\oxi{p'_{\sigma(n)}p''_{\sigma(n)}}(y).
\]
\end{enumerate}
Two 
expressions~(\ref{Vcera_na_dvou_koncertech_a_kdyz_nebude_prset_pujdu_dnes_zase.})
represent the same element of
$\Mod(\oC)$ if and only if they
are related by a finite numbers of the above moves.
\end{proposition}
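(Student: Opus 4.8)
The plan is to prove the two assertions of the proposition separately: first the existence of the normal form~(\ref{Vcera_na_dvou_koncertech_a_kdyz_nebude_prset_pujdu_dnes_zase.}), and then the fact that the three moves generate all coincidences between such expressions. For the existence I would argue by induction on the number of structure operations used to assemble a given $x\in\Mod(\oC)$ out of $\oC$. An element of $\oC$ is already of the required shape with $n=0$, and a contraction applied to a normal form is again in normal form after prepending the new $\oxi{}$. The only interesting case is a $\circ$-operation $X'\ooo{u}{v}X''$ of two expressions $X',X''$ already in normal form: here the modular-operad axioms let me pull the contractions occurring in the two factors to the outside of the $\circ$-operation, producing $\oxi{p'_1p''_1}\cdots\oxi{p'_np''_n}(y'\ooo{u}{v}y'')$, and $y'\ooo{u}{v}y''\in\oC$ because $\oC$ is closed under $\circ$-operations. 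This establishes~(\ref{Vcera_na_dvou_koncertech_a_kdyz_nebude_prset_pujdu_dnes_zase.}).

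For the second assertion I would realise the quotient by the moves as a modular operad in its own right and then invoke the universal property of $\Mod(\oC)$. Concretely, let $N$ denote the set of all formal expressions~(\ref{Vcera_na_dvou_koncertech_a_kdyz_nebude_prset_pujdu_dnes_zase.}) and let $\sim$ be the equivalence relation on $N$ generated by the moves (i)--(iii). On $N/\!\sim$ I define the contractions, the $\circ$-operations and the symmetric-group action by the evident formulas: a contraction prepends a new $\oxi{}$, a $\circ$-operation concatenates the contraction strings and composes the underlying $\oC$-elements exactly as in the inductive step above, and a permutation acts by relabelling the free inputs. The three moves are tailored for precisely this purpose, move (iii) encoding commutativity of contractions, move (ii) encoding the irrelevance of the names of contracted inputs (equivariance), and move (i) encoding the compatibility of a contraction with the $\circ$-operation of $\oC$. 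Granting that these operations descend to $N/\!\sim$ and satisfy the modular-operad axioms, $N/\!\sim$ becomes a modular operad and $\eta:\oC\to N/\!\sim$, $y\mapsto[y]$, is a morphism of cyclic operads.

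I would then verify that $(N/\!\sim,\eta)$ enjoys the universal property defining $\Mod(\oC)$. Given any modular operad $\oM$ and any cyclic-operad morphism $f:\oC\to\oM$, set $\tilde{f}\big(\oxi{p'_1p''_1}\cdots\oxi{p'_np''_n}(y)\big):=\oxi{p'_1p''_1}\cdots\oxi{p'_np''_n}\big(f(y)\big)$. This is well defined on $N/\!\sim$ because each of the moves (i)--(iii) is a valid identity in the ambient modular operad $\oM$, being a consequence of its axioms; it is a morphism of modular hybrids by construction, and it is the \emph{unique} extension of $f$ because every element of $N/\!\sim$ is a contraction of an element of $\oC$. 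By uniqueness of an object satisfying a universal property, the comparison morphism $N/\!\sim\;\to\;\Mod(\oC)$ induced by the inclusion $\oC\hookrightarrow\Mod(\oC)$ is an isomorphism. Since this morphism sends the class of an expression~(\ref{Vcera_na_dvou_koncertech_a_kdyz_nebude_prset_pujdu_dnes_zase.}) to the element of $\Mod(\oC)$ it represents, two expressions represent the same element precisely when their classes in $N/\!\sim$ coincide, that is, precisely when they are related by finitely many moves; note that the well-definedness of the comparison map already yields the easy ``if'' direction, while its injectivity yields the ``only if'' direction.

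The main obstacle is the well-definedness and axiom check in the second paragraph: one must confirm that the operations on $N/\!\sim$ are independent of the chosen representatives and that all modular-operad identities (associativity of $\circ$, commutativity and compatibility of contractions, and equivariance) survive passage to the quotient, using only the moves (i)--(iii). This is a finite but delicate bookkeeping exercise, and it is here that the precise form of move (i)---relating $\oxi{p'p''}(w'\ooo{q'}{q''}w'')$ to its ``dual'' $\oxi{q'q''}(w'\ooo{p'}{p''}w'')$---does the decisive work, since it is the single relation mixing the $\circ$-structure inherited from $\oC$ with the freshly added contractions.
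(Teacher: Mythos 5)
Your proposal is correct, but it takes a genuinely different route from the paper. The paper works with the concrete realization $\Mod(\oC)\cong\Free(\oC)/\!\sim$, where $\Free(\oC)$ is the free modular operad described by decorated graphs: the normal form~(\ref{Vcera_na_dvou_koncertech_a_kdyz_nebude_prset_pujdu_dnes_zase.}) is obtained by contracting a graph along a maximal subtree, moves (ii) and (iii) account for the choices of labels and of the order of contractions, and the decisive input is a graph-theoretic theorem of Ore that any two maximal subtrees are related by a sequence of single-edge exchanges (``singular cyclic interchanges''), each of which is precisely an instance of move (i). You instead build the candidate object $N/\!\sim$ by generators and relations, endow it with a modular operad structure, and verify the universal property of the modular envelope, so that $N/\!\sim\;\cong\Mod(\oC)$; the ``only if'' direction then comes from injectivity of the comparison map rather than from spanning-tree combinatorics. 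Your route is more self-contained (no graph description of $\Free(\oC)$, no Ore), at the price of the deferred bookkeeping you acknowledge: well-definedness of the $\circ$-operations and contractions on $N/\!\sim$ and the modular-operad axioms there. That bookkeeping does go through, and the pattern is uniform -- in each check one uses move (iii) to shuttle the relevant contraction to the innermost position, applies move (i) or (ii) there (with $w'$, $w''$ the appropriate $\circ$-composites, which lie in $\oC$ since $\oC$ is closed under $\circ$-operations, and using associativity/equivariance of $\oC$ to identify the cores as literally equal elements), and shuttles back with move (iii) -- so it would be worth recording this mechanism explicitly rather than calling it an obstacle. One further small point: your induction for the existence of the normal form tacitly uses that $\Mod(\oC)$ is generated by the image of $\oC$ under the structure operations (standard, since the sub-modular operad generated by $\oC$ already satisfies the universal property); alternatively, existence follows for free from your own isomorphism $N/\!\sim\;\cong\Mod(\oC)$, whereas in the paper it falls out of contracting along a maximal subtree.
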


\begin{proof}
The modular \envelope\ $\Mod(\oC)$ is isomorphic to the quotient
$\Free(\oC)/\sim$, 
where $\Free(\oC)$ is the free modular operad generated by $\oC$ and $\sim$
is the equivalence that identifies $\circ$-operations inside $\oC$ with the
formal ones in $\Free(\oC)$.

\setcounter{footnote}{0}

As explained e.g.~in~\cite[II.1.9]{markl-shnider-stasheff:book}, 
$\Free(\oC)$ can be constructed as
an explicit colimit whose elements are represented by decorated
graphs. Since we are working in \Set, 
every  $x\in\Free(\oC)$ has well-defined underlying
graph $G(x)$. 
Choose a contractible, not necessary connected, subgraph $T$
in $G(x)$ and contract $x$
along $T$ using the cyclic operad structure of $\oC$. Denote the result
by $C_T(x)$; clearly
\[
C_T(x) \sim x.
\] 
If $T$ is in particular a maximal subtree of $G(x)$, 
then the underlying graph of $C_T(x)$ 
has one vertex, call such a  graph a \broucek.\footnote{Czech for little
  beetle.} The element $C_T(x)$ is obtained by iterated
contractions of some $y\in \oC$. To describe it in such a way
explicitly, i.e.\ as 
\[
\oxi{p'_1p''_1} \cdots \oxi{p'_np''_n} (y) ,\  n \in \bbN,
\]  
with some specific symbols $p'_1,p''_1, \ldots p'_n,p''_n$, one needs
to label the half-edges of \broucek\ and choose their order. The ambiguity
of these choices is reflected by moves (ii) and (iii) of Proposition
\ref{PROOperationsInModEnv}.  

Another ambiguity comes from different
choices of a maximal subtree of $G(x)$. Let us analyze this situation.
Assume that $T_1$ and $T_2$ are different maximal subtrees of
$G(x)$
By~\cite[Chapter~6]{ore}, $T_1$ and $T_2$ are
related by a `singular cyclic interchange.' This means that there
exists a~subgraph $H \subset G(x)$ with precisely one cycle, and two
edges $e_1,e_2$ belonging to this cycle, such that 
\[
H \setminus\{ e_2\} = T_1 \ \hbox { and } H \setminus \{e_1\} = T_2.
\] 
In this situation, $H \setminus \{e_1,e_2\}$ is the disjoint union
of two (non-maximal) trees $U$ and $V$. Let $z := C_{U\sqcup V}
(x)$. Obviously, $G(z)$ is a graph with two vertices decorated by some
$a,b \in \oC$.
Let $u_1,\ldots,u_k$ are the edges of $G(x)$ that do not belong to
$H\setminus \{e_1,e_2\}$. 
We then have, due to the interchange law between contractions and
$\circ$-operations,
\[
z = \oxi{u_1} \cdots \oxi{u_k} \oxi{e_1} (a \circ_{e_2} b) = 
\oxi{u_1} \cdots \oxi{u_k} \oxi{e_2} (a \circ_{e_1} b)
\]
modulo the relations defining $\Mod(\oC)$. In the above display, $\oxi
e$ denotes the contraction along~$e$. Finally, we observe that 
\[
\oxi{u_1} \cdots \oxi{u_k} \oxi{e_1} (a \circ_{e_2} b) \ \hbox {
  represents } C_{T_1}(x)
\]
while 
\[
\oxi{u_1} \cdots \oxi{u_k} \oxi{e_2} (a \circ_{e_1} b) \ \hbox {
  represents } C_{T_2}(x).
\]
Equality $C_{T_1}(x) =C_{T_2}(x)$ is therefore reflected by
move (i) of Proposition \ref{PROOperationsInModEnv}. 
\end{proof}

The above proof shows that move~(i) is the relevant one, the remaining
moves only account for different choices of labels.

\begin{proof}[Proof of Proposition~\ref{THMSuffCondOnInjectivity}]
Recall that each element of $\Mod(\oB)$ is of the
form~(\ref{Vcera_na_dvou_koncertech_a_kdyz_nebude_prset_pujdu_dnes_zase.}).  
So assume that 
$y,z\in    \oB$ and  that 
\begin{gather} 
\label{EQEqualityOfTwoModEnv}
\oxi{p'_1p_1''}\cdots\oxi{p'_np_n''} (y) =
\oxi{q'_1q_1''}\cdots\oxi{q'_nq_n''} (z) \hbox { in } \Mod(\oC). 
\end{gather}
All we need is to show that the same equality holds also in $\Mod(\oB)$.
By
Proposition~\ref{PROOperationsInModEnv},~(\ref{EQEqualityOfTwoModEnv})
holds if and only if there is a finite sequence of moves (i)--(iii) transforming
its left hand side into its right hand side.  
Each move is a replacement of the form
\begin{gather} \label{EQCertainOperationNamed}
\oxi{r'_1r_1''}\cdots\oxi{r'_nr_n''} (u) 
\quad \longmapsto \quad \oxi{s_1's_1''}\cdots\oxi{s_n's_n''}( v)
\end{gather}
with some $u,v \in \oC$. The proof will thus be finished if we
show that $u\in \oB$ in~(\ref{EQCertainOperationNamed}) implies that 
\hbox{$v\in \oB$}.

This is obvious if \eqref{EQCertainOperationNamed} is move (ii) or
(iii). Let us analyze move~(i), that is, see what happens if we replace
\begin{gather} \label{EQOperationOne}
\oxi{p'_1p_1''}\cdots\oxi{p'_{n-1}p''_{n-1}}\oxi{p'p''}(u)
\ \hbox { by } \
\oxi{p'_1p_1''}\cdots\oxi{p'_{n-1}p''_{n-1}}\oxi{q'q''}(v),
\end{gather}
where $u = w'\ooo{q'}{q''}w''$ and $v = w'\ooo{p'}{p''}w''$
with some 
\[
w'\in\oC\big(\stt{p',q'} \sqcup R\big) \ \hbox { and } \
w''\in\oC\big(\stt{p'',q''} \sqcup S\big)
\]
such that $u = w'\ooo{q'}{q''}w'' \in \oB\big(\stt{p',p''} \sqcup R \sqcup
S\big)$.

In case (i) of Proposition~\ref{THMSuffCondOnInjectivity},
$w'\ooo{p'}{p''}w'=\rho(w\ooo{q'}{q''}w')$, i.e.\ $v = \rho(u) 
\in  \oB\big(\stt{q',q''} \sqcup R \sqcup
S\big)$  as required. 
In case (ii),
$v =  w'_1\ooo{p'}{p''}w''_1$ for some  $w'_1\in\oB(\stt{p',q'} \sqcup R)$ 
and $w''_1\in\oB(\stt{p'',q''} \sqcup S)$,
thus again  $v \in  \oB\big(\stt{q',q''} \sqcup R \sqcup
S\big)$ and we are done as well.
\end{proof}

\section{Modular \envelope{s} of the stable and 
Kaufmann-Penner parts}
\label{sec:stable-kaufm-penn}

In this section we use the results of
Section~\ref{Dnes_se_pojedu_podivat_do_Ribeauville} and derive the stable
and Kaufmann-Penner versions of Theorem~\ref{PROCUnivProp}.

\subsection{Stable version.} 
Proposition~\ref{THMModOCStable} below guarrantees
that one may use Proposition~\ref{THMModOCus} to describe 
explicitly the modular
\envelope\ of the stable part $\onsOCst \subset \onsOC$, as done in
Remark~\ref{vcera_v_Eppingu}. 
The main result of this
subsection is Theorem~\ref{PROOCUnivPropST}.

\begin{lemma} \label{LEMModCommWithSt}
Let $\oC_\St$ be the stable part of a cyclic operad $\oC$ as in
Definition~\ref{s_Jarkou_k_Zabackovi}. Then one has the isomorphism
\[
\xi_{\Mod(\oC)}(\oC_\St)  \cong \Mod(\oC)_\St.
\]
Consequently, if the induced map $\varpi :
\oMod{\oC_\St}\to\oMod{\oC}$ is a monomorphism, then
\[
\Mod(\oC_\St) \cong \Mod(\oC)_\St.
\]
\end{lemma}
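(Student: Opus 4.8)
The plan is to establish the two displayed isomorphisms in turn, obtaining the second as a formal consequence of the first together with Proposition~\ref{CorConseqInjOnMods}. Throughout I regard $\oC$ as a cyclic suboperad of $\Mod(\oC)$ via the unit of the adjunction, so that $\oC_\St \subseteq \oC \subseteq \Mod(\oC)$ and the $\xi$-closure $\xi_{\Mod(\oC)}(\oC_\St)$ is literally a modular subcollection of $\Mod(\oC)$; by the definition of $\xi$-closure it is the smallest modular suboperad of $\Mod(\oC)$ containing $\oC_\St$.

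For the first isomorphism I would prove the equality of subcollections $\xi_{\Mod(\oC)}(\oC_\St) = \Mod(\oC)_\St$, which then gives the isomorphism of modular operads. The inclusion $\subseteq$ is the easy half. Since the unit preserves the indexing data $(S;G)$, every element of $\oC_\St$ still satisfies the stability inequality~(\ref{st}) when viewed in $\Mod(\oC)$, so $\oC_\St \subseteq \Mod(\oC)_\St$. Moreover, by Lemma~\ref{Prijedu_zase_pristi_rok?} the inequality~(\ref{st}) is preserved by $\circ$-operations and contractions, so $\Mod(\oC)_\St$ is itself a modular suboperad of $\Mod(\oC)$ (as already noted in the remarks following that lemma). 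Being a modular suboperad containing $\oC_\St$, it must contain the smallest such, namely $\xi_{\Mod(\oC)}(\oC_\St)$.

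The reverse inclusion is where the real work lies and is the main obstacle. Given a stable element $x \in \Mod(\oC)_\St$, Proposition~\ref{PROOperationsInModEnv} puts it in the normal form $x = \oxi{p'_1 p''_1}\cdots\oxi{p'_n p''_n}(y)$ with $y \in \oC$. I would set $x_k := \oxi{p'_{k+1}p''_{k+1}}\cdots\oxi{p'_n p''_n}(y)$ for $0 \le k \le n$, so that $x_0 = x$, $x_n = y$, and $x_{k-1} = \oxi{p'_k p''_k}(x_k)$. Since $x_0$ is stable, and since the second assertion of Lemma~\ref{Prijedu_zase_pristi_rok?} guarantees that whenever a contraction of an element is stable the element itself is stable, an induction on $k$ starting from $x_0$ shows that every $x_k$, and in particular $y = x_n$, is stable. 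Hence $y \in \oC_\St$, and $x = \oxi{p'_1 p''_1}\cdots\oxi{p'_n p''_n}(y)$ exhibits $x$ as an element of $\xi_{\Mod(\oC)}(\oC_\St)$. The one point requiring care is that the intermediate stages $x_k$ are elements of $\Mod(\oC)$ rather than of $\oC$, but stability~(\ref{st}) depends only on $(S;G)$, so Lemma~\ref{Prijedu_zase_pristi_rok?} applies to them verbatim; this is exactly the feature of contractions (source stable iff target stable) that makes the induction run.

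Finally, for the \emph{Consequently} clause I would invoke Proposition~\ref{CorConseqInjOnMods} with $\oB := \oC_\St$, which is a cyclic suboperad of $\oC$ by the remarks following Lemma~\ref{Prijedu_zase_pristi_rok?}. If $\varpi : \Mod(\oC_\St) \to \Mod(\oC)$ is a monomorphism, that proposition yields $\Mod(\oC_\St) \cong \xi_{\Mod(\oC)}(\oC_\St)$, and composing with the first isomorphism $\xi_{\Mod(\oC)}(\oC_\St) \cong \Mod(\oC)_\St$ gives $\Mod(\oC_\St) \cong \Mod(\oC)_\St$, as claimed. I expect no further difficulty here, the content having been concentrated entirely in the reverse inclusion of the first part.
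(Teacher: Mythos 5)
Your proposal is correct and follows essentially the same route as the paper: both directions rest on Proposition~\ref{PROOperationsInModEnv} (normal form) together with the two halves of Lemma~\ref{Prijedu_zase_pristi_rok?}, and the \emph{Consequently} clause is obtained by invoking Proposition~\ref{CorConseqInjOnMods} exactly as the paper does. The only cosmetic difference is in the easy inclusion, where you argue by minimality of the $\xi$-closure against the modular suboperad $\Mod(\oC)_\St$ rather than by directly contracting an element of $\oC_\St$, and in the reverse inclusion you make explicit the induction on intermediate contractions that the paper leaves implicit.
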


\begin{proof}
By Proposition~\ref{PROOperationsInModEnv}, 
every $x\in\oMod{\oC}$ is of the form 
\begin{gather} 
\label{EQCanFormOfEltOfModEnv}
x=\oxi{p'_1p_1''}\cdots\oxi{p'_np_n''} (y)
\end{gather}
for some $y\in\oC$ and $n\in \bbN$. 
If $x\in \xi_{\Mod(\oC)}(\oC_\St)$, we may assume that
$y\in\oC_\St\subset \Mod(\oC)_\St$. Since contractions preserve 
stable parts of modular
operads  by Lemma~\ref{Prijedu_zase_pristi_rok?}, 
$x\in \Mod(\oC)_\St$, which shows  that
\[
\xi_{\Mod(\oC)}(\oC_\St)  \subset \Mod(\oC)_\St.
\]
If $x\in \Mod(\oC)_\St$, then $y \in \Mod(\oC)_\St \cap \oC=\oC_\St$
by the second part of  Lemma~\ref{Prijedu_zase_pristi_rok?}, 
hence $x\in \xi_{\Mod(\oC)}(\oC_\St)$,
thus 
\[
\Mod(\oC)_\St \subset  \xi_{\Mod(\oC)}(\oC_\St).
\]
Having this established, 
the second part of the lemma follows from 
Proposition~\ref{CorConseqInjOnMods}.
\end{proof}

Let us turn our attention to the cyclic hybrid $\onsOC$ from
Example~\ref{h1} and its stable  version $\onsOCst$ analyzed in
Example~\ref{Dnes_naposledy_v_Myluzach_na_varhanky.}.

\begin{proposition} 
\label{THMModOCStable}
One has an isomorphism
$\ModHybFun(\onsOCst) \cong \ModHybFun(\onsOC)_\St$.
\end{proposition}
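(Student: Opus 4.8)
The plan is to deduce the statement from the hybrid analogue of Lemma~\ref{LEMModCommWithSt}, applied to the cyclic hybrid $\oC = \onsOC$ whose stable part is, by Example~\ref{Dnes_naposledy_v_Myluzach_na_varhanky.}, precisely $\onsOCst$. That analogue gives $\xi_{\ModHybFun(\onsOC)}(\onsOCst) \cong \ModHybFun(\onsOC)_\St$ and, via (the hybrid form of) Proposition~\ref{CorConseqInjOnMods}, reduces the claimed isomorphism to showing that the induced map $\varpi : \ModHybFun(\onsOCst) \to \ModHybFun(\onsOC)$ is a monomorphism. So the whole task becomes verifying this injectivity.

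For injectivity I would invoke the hybrid analogue of Proposition~\ref{THMSuffCondOnInjectivity} for the inclusion $\onsOCst \subset \onsOC$, which amounts to checking conditions (i)/(ii) of that proposition for every $\circ$-composition $w' \ooo{q'}{q''} w''$ lying in $\onsOCst$, separately in the open and in the closed color. The decisive elementary observation is that a $\circ$-operation adds the operadic genera and removes exactly two inputs, so $w' \ooo{p'}{p''} w''$ and $w' \ooo{q'}{q''} w''$ always carry the same genus $G' + G''$ and the same total number $|O| + |C|$ of inputs; since the hybrid stability~\eqref{Vcera_jsem_hodinu_bezel.} depends only on $2(G-1) + |O| + |C|$, the two composites are stable or unstable together, and this makes the open and closed colors completely parallel.

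If both factors $w', w''$ are themselves stable, condition (ii) holds trivially with $w'_1 = w'$ and $w''_1 = w''$. Otherwise, say $w'$ is unstable (the case of $w''$ being symmetric), I would use the complete list~\eqref{REMListOfUnstables} of the six unstable elements of $\onsOC$: a factor carrying two \emph{distinct} distinguished inputs $p', q'$ has at least two inputs, so among $\R1,\ldots,\R6$ only $\R3 = \sur{\cycs{p'q'}}{0}[\emptyset]$ (two open inputs) and $\R6 = \sur{\varnothing}{0}[\stt{p',q'}]$ (two closed inputs) can occur, whence the complementary set $R$ is empty and $p', q'$ share the same color. In either case the merging formula shows that $w' \ooo{q'}{q''} w''$ is just $w''$ with its input $q''$ relabelled to $p'$, while $w' \ooo{p'}{p''} w''$ is $w''$ with $p''$ relabelled to $q'$; these are interchanged by the transposition $\rho$ fixing $S$ with $\rho(p') = q''$ and $\rho(p'') = q'$, so condition (i) holds. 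The remaining possibility, both factors unstable, forces the composite to be of type $\R3$ or $\R6$ again, hence unstable and outside $\onsOCst$, so the hypothesis of the proposition is vacuous there. The one point genuinely requiring care is exactly this degenerate-factor bookkeeping — confirming that the only unstable factors admitting two distinguished inputs act by a mere relabelling — but it is tightly controlled by the finiteness of the list~\eqref{REMListOfUnstables}. With the hypotheses of the hybrid Proposition~\ref{THMSuffCondOnInjectivity} verified, $\varpi$ is injective and the hybrid form of Lemma~\ref{LEMModCommWithSt} yields the desired isomorphism $\ModHybFun(\onsOCst) \cong \ModHybFun(\onsOC)_\St$.
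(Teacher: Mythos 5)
Your proof is correct and follows essentially the same route as the paper's: it reduces the statement to injectivity of $\varpi : \ModHybFun(\onsOCst) \to \ModHybFun(\onsOC)$ via (the hybrid forms of) Lemma~\ref{LEMModCommWithSt} and Proposition~\ref{CorConseqInjOnMods}, and then verifies the hypotheses of Proposition~\ref{THMSuffCondOnInjectivity} by noting that the only unstable elements of $\onsOC$ with two inputs are $\R3$ and $\R6$, whose $\circ$-compositions act by mere relabelling, so condition~(i) holds via the transposition $\rho$. The only (harmless) difference is that you treat the case of two stable factors explicitly through condition~(ii), which the paper leaves implicit, and you swap the roles of $w'$ and $w''$ in the mixed case.
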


\begin{remark}
\label{vcera_v_Eppingu}
An explicit description of $\oMod{\onsOC}_\St$ and therefore, by
Proposition~\ref{THMModOCStable}, also of $\ModHybFun(\onsOCst)$,
is provided by imposing the stability assumption on the expressions 
in~(\ref{v_pondeli_do_Leicesteru}) of Proposition~\ref{THMModOCus}. Explicitly, the symbol
\begin{equation}
\label{zas_problemy_s_Jarkou}
\sur{ \sur{\sfO_1}{g_1} \cdots \sur{\sfO_a}{g_a}}{\mini g}[\zvedak C]
\end{equation}
is stable if and only if 
\begin{equation}
\label{eq:2}
\textstyle
4\big(g + \sum_{i=1}^a g_i\big) + 2b + 2|C| + |O| >4
\end{equation}
where $b := \sum_{i=1}^a b_i$ is the total number of cycles in
$\sfO_1,\ldots,\sfO_a$. 
\end{remark}

\begin{proof}[Proof of Proposition~\ref{THMModOCStable}]
We verify that the inclusion $\onsOCst \subset \onsOC$ 
satisfies condition (i) of Proposition~\ref{THMSuffCondOnInjectivity}. 
The proof will then follow from Proposition~\ref{CorConseqInjOnMods} and 
Lemma~\ref{LEMModCommWithSt}.

Let $w',w''\in\onsOC$  be such that $w'\ooo{q'}{q''}w''\in\onsOC_\St$.
It easily follows from 
the definition of the stable part if $w',w''\not \in \onsOC_\St$, 
then also $w'\ooo{q'}{q''}w''\not\in\onsOC$, so we may assume e.g.\
that $w'\in\onsOC_\St$ while $w''\not\in\onsOC_\St$.
Since $w''$ has  
at least $2$ inputs, according to~\eqref{REMListOfUnstables} it must be
\[
\hbox { either } \,
\R3 = \sur{\cycs{p''q''}}{0}[\emptyset] 
\hbox { or }\,  \R6 =  \sur{\varnothing}{0}[\stt{p'',q''}].
\]
In both cases, $w'\ooo{q'}{q''}w''$ `replaces $q'$ by $p''$'
and  $w'\ooo{p'}{p''}w''$ `replaces $p'$ by $q''$' in $w'$.     

Let us explain what we mean by this when $w'' = \, \R6$.
Then $p',p'',q'$ and $q''$ must be `closed' inputs and one has, by definition,
\[
\sur{\sfO}{g}[R \sqcup \stt{p',q'}] 
\ooo{q'}{q''} \sur{\varnothing}{0}[\stt{p'',q''}] = 
\sur{\sfO}{g}[R \sqcup \stt{p',p''}] \ \hbox { and } \ 
\sur{\sfO}{g}[R \sqcup \stt{p',q'}] 
\ooo{p'}{p''} \sur{\varnothing}{0}[\stt{p'',q''}] = 
\sur{\sfO}{g}[R \sqcup \stt{q',q''}]. 
\]
Clearly
$w'\ooo{q'}{q''}w'=\rho(w'\ooo{p'}{p''}w'')$ 
for a bijection $\rho$ mapping $\stt{p',p''}$ to $\stt{q',q''}$ 
and restricting to 
the identity on $R$. The case when $w'' = \, \R3$ can be discussed
similarly. We leave the details to the reader.
\end{proof}

We have the following stable analog of Theorem~\ref{PROCUnivProp}.

\begin{theorem} 
\label{PROOCUnivPropST}
Let\/ $\EuScript{I}$ be the 
ideal in the stable modular hybrid $\ModHybFun(\onsOC_\St) \cong
\ModHybFun(\onsOC)_\St$  
generated by the relations
\begin{equation}
\label{Zitra_uz_jedu_z_Meduz_do_Prahy.}
\sur{ \sur{\cycs{q}}{0} \sur{\cycs{r}}{0}}{\zvedak0}[\emptyset] = \sur{
  \sur{\cycs{q}\cycs{r}}{0} }{\zvedak 0}[\emptyset] \quad \textrm{and}\quad
\sur{ \sur{\cycs{q}}{0}\sur{\cycs{}}{0} }{\zvedak0}[\emptyset] = \sur{
  \sur{\cycs{q}\cycs{}}{0} }{\zvedak 0}[\emptyset].
\end{equation}
Then one has an isomorphism of stable modular hybrids
\[
\oQnsOCst \ \cong\ \ModHybFun(\onsOC_\St)/ \, \EuScript{I}.
\]
Therefore for any, not necessarily stable, modular hybrid $\oH$ and a
morphism $F:\onsOCst\to\oH$ of cyclic hybrids satisfying 
\[
\oxi{uv}F\sur{\cycs{uqvr}}{0}[\emptyset] =
F\sur{\cycs{q}\cycs{r}}{0}[\emptyset]  \quad \textrm{and}\quad
\oxi{uv}F\sur{\cycs{uqv}}{0}[\emptyset] =
F\sur{\cycs{q}\cycs{}}{0}[\emptyset] 
\]
there is a unique morphism $\hat{F}:\oQnsOCst\to\oH$ of modular hybrids
making the diagram 
\[
\xymatrix@C4em{
\onsOCst\ \ar@{^{(}->}[r] \ar_{F}[dr]  & 
\oQnsOCst \ar@{-->}^(.5){\hat{F}}[d]
\\ &
\oH 
}
\]
commutative.
\end{theorem}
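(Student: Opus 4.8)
The plan is to mimic the proof of Theorem~\ref{PROCUnivProp} almost verbatim, replacing the single Cardy relation by the two relations in~(\ref{Zitra_uz_jedu_z_Meduz_do_Prahy.}), and making sure that every combinatorial step of the original proof stays inside the stable part. The groundwork has already been laid: by Proposition~\ref{THMModOCStable} we have $\ModHybFun(\onsOCst) \cong \ModHybFun(\onsOC)_\St$, so I may work with the explicit description of $\ModHybFun(\onsOC)$ from Proposition~\ref{THMModOCus} restricted by the stability inequality~(\ref{eq:2}). First I would construct the two mutually inverse maps $\alpha$ and $\beta$ exactly as in the proof of Theorem~\ref{PROCUnivProp}, but between $\oQnsOCst$ and $\ModHybFun(\onsOC)_\St/\EuScript{I}$; since both $\alpha$ and $\beta$ are restrictions of well-defined genus-preserving morphisms of modular hybrids and the target collections are cut out by the same inequality, they remain well defined on the stable subobjects.

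The core of the argument is again $\beta\alpha=\id$, i.e.\ verifying the analog of~(\ref{EQCardyOCZero}), namely
\[
\sur{\sur{\sfO_1}{g_1} \cdots \sur{\sfO_a}{g_a}}{\podpera g}[\zvedak C]
=
\sur{\sur{\sfO_1 \cdots\, \sfO_a}{\velkyzvedak g+\sum_{i=1}^a
    g_i}}{\zvedak 0}[\podpera C]
\]
in $\ModHybFun(\onsOC)_\St/\EuScript{I}$. The three reduction lemmas~(\ref{EQCardyOCOne}), (\ref{EQCardyOCTwo}) and~(\ref{EQCardyOCThree}) of the original proof all go through, but each intermediate expression in those chains of equalities must now itself be a stable symbol. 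The key new feature is that the first generating relation of $\EuScript{I}$ no longer suffices on its own: when one of the nests being absorbed is the empty cycle $\sur{\cycs{}}{0}$, the manipulation~(\ref{EQCardyOCOne}) would in the unstable world use $\sur{\sur{\cycs{p}}{0}\sur{\cycs{s'}}{0}}{0}[\emptyset]$, and here the two relations together let me merge a point-cycle with an empty pancake. So I would prove a stable analog of~(\ref{EQCardyOCOne}) covering both the case $\sur{\sur{\cycs{p}}{0}\sur{\cycs{qr}}{0}}{0}[C]$ (first relation) and the case $\sur{\sur{\cycs{q}}{0}\sur{\cycs{}}{0}}{0}[C]$ (second relation), then feed these into the inductive merging step~(\ref{EQCardyOCTwo}) and the genus-shuffling step~(\ref{EQCardyOCThree}) as before.

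The second half of the theorem — producing the unique $\hat F$ from a cyclic-hybrid morphism $F$ satisfying the two displayed Cardy-type relations — follows the same pattern as in Theorem~\ref{PROCUnivProp}. I would take the unique modular-hybrid extension $\tF:\ModHybFun(\onsOC_\St)\to\oH$ guaranteed by Proposition~\ref{THMModOCStable} together with the universal property of the modular \envelope, and check that $\tF$ kills both generators of $\EuScript{I}$. For the first generator this is the computation already displayed at the end of the proof of Theorem~\ref{PROCUnivProp}; for the second, the identical string of equalities
\[
\tF\sur{\sur{\cycs{q}}{0}\sur{\cycs{}}{0}}{0}[\emptyset]
= F\sur{\cycs{q}\cycs{}}{0}[\emptyset]
= \oxi{uv}F\sur{\cycs{uqv}}{0}[\emptyset]
= \oxi{uv}\tF\sur{\sur{\cycs{uqv}}{0}}{0}[\emptyset]
= \tF\sur{\sur{\cycs{q}\cycs{}}{0}}{0}[\emptyset]
\]
uses the second hypothesis on $F$ in the middle step; uniqueness of $\hat F$ then follows from the uniqueness of $\tF$ and the surjectivity of the projection onto the quotient.

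The main obstacle I anticipate is bookkeeping of stability rather than any genuinely new idea: I must confirm that in every intermediate expression appearing in the reduction chains — in particular the auxiliary terms of the form $\sur{\sur{\cycs{p}}{0}\sur{\cycs{s'}}{0}}{0}[\emptyset]$ built during the $\circ$-decompositions — either already lie in the stable part, or that the relations in $\EuScript{I}$ are precisely what is needed to bypass the unstable ones listed in~(\ref{REMListOfUnstables}). Because the second Cardy relation $\sur{\sur{\cycs{q}}{0}\sur{\cycs{}}{0}}{0}[\emptyset] = \sur{\sur{\cycs{q}\cycs{}}{0}}{0}[\emptyset]$ was added exactly to handle the empty-pancake nest $\R1 = \sur{\cycs{}}{0}[\emptyset]$, I expect the verification to close, but the delicate point is ensuring that no step silently requires one of the six unstable elements as a standalone factor; a short case analysis against Table~\ref{Avignon} should settle this.
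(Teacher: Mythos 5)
Your plan follows the paper's own route: the identification $\ModHybFun(\onsOCst)\cong\ModHybFun(\onsOC)_\St$ via Proposition~\ref{THMModOCStable}, the same mutually inverse maps $\alpha$ and $\beta$, a stable analogue of~(\ref{EQCardyOCOne}) whose empty-pancake case (in the paper: $\sur{\sur{\cycs{}}{0}\sur{\cycs{qr}}{0}}{0}[C]=\sur{\sur{\cycs{}\cycs{qr}}{0}}{0}[C]$, obtained by replacing $\cycs{p}$ by $\cycs{}$) is exactly what the second generating relation supplies, and an identical universal-property argument — your displayed chain of equalities for the second generator is the one the paper writes. That half of your proposal is complete and correct.

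The gap is in the part you dismiss as bookkeeping. In the proof of the merging identity~(\ref{EQCardyOCTwo}), the decomposition~(\ref{pisu_v_Koline}) that you propose to feed into ``as before'' does not exist in $\ModHybFun(\onsOC_\St)$ in the degenerate cases: when $g_1=0$ and $\sfO_1$ is $\cycs{}$ or a one-element cycle, the factor $\sur{\sur{p'_1\sfO_1}{g_1}}{0}[\emptyset]$ is unstable (likewise for $\sfO_2$), and when $a=2$ the last factor $\sur{\sur{\cycs{r''}}{0}}{0}[\emptyset]$ is unstable. Neither of your two proposed remedies applies there: those intermediate terms are \emph{not} already stable, and the relations of $\EuScript{I}$ cannot ``bypass'' them, because a relation in a quotient of $\ModHybFun(\onsOC_\St)$ can only be applied to expressions all of whose factors exist in $\ModHybFun(\onsOC_\St)$ — an unstable factor cannot even be written down. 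What is actually required, and what the paper does, is to \emph{re-choose the decomposition} in each degenerate case so that every factor is stable, absorbing the offending small nest into a larger term: for $\sfo'_1=\cycs{}$ or $\cycs{s}$ one replaces~(\ref{pisu_v_Koline}) by a decomposition whose middle term is $\sur{\sur{\sfo'_1}{0}\sur{\cycs{q''r'}}{0}}{0}[\emptyset]$; a similar absorption handles the case where both $\sfO_1$ and $\sfO_2$ are small; and when $a=2$ one composes directly against $\sur{\sur{\cycs{p''}}{0}\sur{\cycs{q''}}{0}}{0}[\emptyset]$ with no fourth factor. This three-way case analysis is the actual content of the paper's stable proof, not routine verification, so your plan is incomplete precisely there — though the repair is fully compatible with everything else you wrote.
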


The reader may wonder why we have two relations
in~(\ref{Zitra_uz_jedu_z_Meduz_do_Prahy.}) while the `unstable'
Theorem~\ref{PROCUnivProp} has only one. The explanation is that, in
the unstable case, the second relation
in~(\ref{Zitra_uz_jedu_z_Meduz_do_Prahy.}) is the same as
\[
\sur{ \sur{\cycs{q}}{0} \sur{\cycs{r}}{0}}{\zvedak0}[\emptyset]  
\ooo rs \sur{ \sur{\cycs{s}}{0}}{\zvedak 0}[\emptyset] = \sur{
  \sur{\cycs{q}\cycs{r}}{0} }{\zvedak 0}[\emptyset]
\ooo rs \sur{ \sur{\cycs{s}}{0}}{\zvedak 0}[\emptyset] ,
\]
so it belongs to the ideal generated by the first relation.
Since 
\[
\sur{ \sur{\cycs{s}}{0}}{\zvedak 0}[\emptyset]
\]
is not stable, the same reasoning does not apply to $\ModHybFun(\onsOC_\St)$.

\begin{proof}[Proof of Theorem~\ref{PROOCUnivPropST}]
The proof is a modification of the proof of
Theorem~\ref{PROCUnivProp} so we mention only the differences. First of all, 
in addition to \eqref{EQCardyOCOne}, we also need to prove that
\[
\sur{\sur{\cycs{}}{0} \sur{\cycs{qr}}{0}}{\podpera 0}[\podpera C] 
= \sur{\sur{\cycs{}\cycs{qr}}{0}}{\podpera 0}[\podpera C]
\]
modulo $\EuScript{I}$. This equality can easily be obtained by
replacing, in~(\ref{EQCardyOCOne1}) and~(\ref{EQCardyOCOne2}), $\cycs{p}$
by  $\cycs{}$.

It might also happen that some terms in~(\ref{pisu_v_Koline}) which we
used to prove~(\ref{EQCardyOCTwo}) are unstable. Let us denote the
terms constituting~(\ref{pisu_v_Koline}) by
\begin{align*}
\Rrom A := 
\sur{\sur{\, p'\sfo'_1\, \sfo'_2\cdots\,
    \sfo'_{b'}}{g_1}}{\podpera 0}[\emptyset],\
\Rrom B& := 
\sur{\sur{\, q'\sfo''_1\, \sfo''_2\cdots\,
  \sfo''_{b''}}{g_2}}{\podpera 0}[\emptyset] ,\
\Rrom C := 
\sur{\sur{\cycs{p''}}{\podpera 0} \sur{\cycs{q''r'}}{0}}{\podpera
  0}[\emptyset] 
\\
\hbox { and } \ \Rrom D& :=
\sur{\sur{\cycs{r''}}{\podpera 0}\sur{\sfO_3}{g_3} \cdots  \sur{\sfO_a}{g_a} }{g}[C].
\end{align*}
Term\, $\Rrom C$ is always stable. Term\, $\Rrom A$ is unstable if and only if
$g_1=0$ and
$\sfO_1' = \sfo_1' = \cycs{p'}$ or $\cycs{p's}$
for some symbol $s$, in which case
\[
\Rrom A := 
\sur{\sur{\cycs{p'}}{0}}{\podpera 0}[\emptyset] \ \hbox { or } \
\sur{\sur{\cycs{p's}}{0}}{\podpera 0}[\emptyset].
\]
Likewise, $\Rrom B$ is unstable if and only if
\[
\Rrom B := 
\sur{\sur{\cycs{q'}}{0}}{\podpera 0}[\emptyset] \ \hbox { or } \
\sur{\sur{\cycs{q't}}{0}}{\podpera 0}[\emptyset]
\]
for a symbol $t$.
Finally, $\Rrom D$ is unstable if and only if $a = 2$, in which case
\[
\Rrom D := 
\sur{\sur{\cycs{r''}}{0}}{\podpera 0}[\emptyset].
\]
Let us analyze all possible situations.

\noindent 
{\it Term\, $\Rrom A$ is unstable but\, $\Rrom B$ is stable.} The left hand side
of~(\ref{EQCardyOCTwo}) takes the form
\[
\sur{\sur{\sfo_1'}{0} \sur{\sfO_2}{0} \cdots  
\sur{\sfO_a}{g_a}}{\mini g}[C] 
\] 
with $\sfo'_1 = \cycs{}$ or  $\sfo_1 = \cycs{s}$.
We then instead of~(\ref{pisu_v_Koline}) take
\[
\sur{\sur{\, q'\sfo''_1\, \sfo''_2\cdots\, \sfo''_{b''}}{g_2}}{\podpera 0}[\emptyset] 
\ooo{q'}{q''}
\sur{\sur{\sfo'_1}{0} \sur{\cycs{q''r'}}{0}}{\podpera 0}[\emptyset] \ooo{r'}{r''}
\sur{\sur{\cycs{r''}}{\podpera 0}\sur{\sfO_3}{g_3} \cdots  \sur{\sfO_a}{g_a} }{g}[C]
\]
and proceed as before. The situation when\, $\Rrom B$ is unstable but\, $\Rrom
A$ is stable is similar.

\noindent 
{\it Both\, $\Rrom A$ and\, $\Rrom B$ are unstable.}
The  left hand side
of~(\ref{EQCardyOCTwo}) is of the form
\[
\sur{\sur{\sfo_1'}{0} \sur{\sfo''_1}{0} \cdots  
\sur{\sfO_a}{g_a}}{\mini g}[\zvedak C] 
\]
where $\sfo'_1 = \cycs{}$ or  $\cycs{s}$ and $\sfo''_1 
= \cycs{}$ or  $\cycs{t}$.
We then instead of~(\ref{pisu_v_Koline}) take
\[
\sur{\sur{\sfo'_1}{\podpera 0} 
\sur{\, \sfo_1''r'} {\podpera 0}}{\podpera 0}[\zvedak \emptyset] \ooo{r'}{r''}
\sur{\sur{\cycs{r''}}{\podpera 0}\sur{\sfO_3}{g_3} 
\cdots  \sur{\sfO_a}{g_a} }{g}[\zvedak C]
\]
and proceed as in the proof of
Theorem~\ref{PROCUnivProp}.

\noindent 
{\it Term\, $\Rrom D$ is unstable.} 
 Then the left hand side
of~(\ref{EQCardyOCTwo}) takes the form
\[
\sur{\sur{\sfO_1}{g_1} \sur{\sfO_2}{g_2}}{\mini g}[\zvedak C] 
\] 
In this case,  instead of~(\ref{pisu_v_Koline}), we simply take
\[
\sur{\sur{\, p'\sfo'_1\, \sfo'_2\cdots\, \sfo'_{b'}}{g_1}}{\podpera 0}[\emptyset] 
\ooo{p'}{p''}
\sur{\sur{\, q'\sfo''_1 \, \sfo''_2\cdots\, \sfo''_{b''}}{g_2}}{\podpera 0}[\emptyset] 
\ooo{q'}{q''}
\sur{\sur{\cycs{p''}}{\podpera 0} \sur{\cycs{q''}}{0}}{\podpera
  0}[\emptyset].
\]
This finishes the proof.
\end{proof}

\subsection{Kaufmann-Penner variant} 
The first result of this subsection explains how to modify
Proposition~\ref{THMModOCus} for the modular \envelope\ of
the \KaPe\ cyclic hybrid
$\onsOCKP$. Theorem~\ref{THMOCUnivPropKP} then describes $\oQnsOCKP$ as
the quotient of this modular
\envelope\ by the Cardy~condition.

\begin{proposition}
\label{leze_na_mne_chripka}
The modular \envelope\ 
$\ModHybFun(\onsOCKP)$ is the modular subhybrid of the modular \envelope\/ 
$\ModHybFun(\onsOCst)$ obtained by imposing the stability assumption~(\ref{eq:2})
on symbols 
in~(\ref{v_pondeli_do_Leicesteru}) resp.\
in~(\ref{zas_problemy_s_Jarkou}), 
and further discarding
\begin{enumerate}
\item[(i)] 
symbols 
$\sur{ \sur{\cycs{}}{0} \cdots \sur{\cycs{}}{0} }{\zvedak 0}[\zvedak \emptyset]$ with
$a\geq 3$,\footnote{Recall that $a$ is the number of nests.}
\item[(ii)] symbols \rule{0pt}{2em}
$\sur{ \sur{\cycs{}}{0} \cdots \sur{\cycs{}}{0} \ V }{\zvedak 0}[\emptyset]$,
$a\geq 2$, 
where $V\in\oQnsO$ has at least one input, and
\item[(iii)] 
\rule{0pt}{2em}
symbols $\sur{ \sur{\cycs{}}{ 0} 
\cdots \sur{\cycs{}}{0} }{\zvedak 0}[\zvedak \stt d]$, $a\geq 2$, 
where $d$ is a single closed input.
\end{enumerate}
\end{proposition}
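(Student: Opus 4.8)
The plan is to combine the explicit description of the unrestricted modular envelope $\ModHybFun(\onsOC)$ from Proposition~\ref{THMModOCus} with the injectivity criterion of Proposition~\ref{THMSuffCondOnInjectivity}, exactly as was done for the stable case in Proposition~\ref{THMModOCStable}. The key point is that the two inclusions $\onsOCKP \hookrightarrow \onsOCst \hookrightarrow \onsOC$ induce maps of modular envelopes, and once these are known to be injective, $\ModHybFun(\onsOCKP)$ can simply be identified with a subhybrid of $\ModHybFun(\onsOC)$ spanned by a recognizable class of symbols. Since $\ModHybFun(\onsOCst) \cong \ModHybFun(\onsOC)_\St$ is already established, I would phrase the whole statement relative to $\ModHybFun(\onsOCst)$ and only need to analyze the passage from $\onsOCst$ to $\onsOCKP$.

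First I would verify that the inclusion $\onsOCKP \subset \onsOCst$ satisfies condition (i) (or (ii)) of Proposition~\ref{THMSuffCondOnInjectivity}. Concretely, suppose $w', w'' \in \onsOCst$ with $w' \ooo{q'}{q''} w'' \in \onsOCKP$; I must produce either the bijection $\rho$ of case (i) or the refactorization of case (ii). Using the explicit list in Example~\ref{KP} of which symbols $\sur{\sfo_1\cdots\sfo_b}{0}[C]$ lie in $\onsOCKP$, I would argue that if the $\circ$-composite lands in $\onsOCKP$ then the ``swap'' $w'\ooo{p'}{p''}w''$ differs from it only by relabelling the two glued inputs, giving the required $\rho$ fixing $R\sqcup S$; the cases that could fail are exactly those where one factor is one of the discarded type~(i)--(iii) configurations, and these are ruled out because the composite would then itself be discarded. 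Granted injectivity, Proposition~\ref{CorConseqInjOnMods} identifies $\ModHybFun(\onsOCKP)$ with the relative $\xi$-closure $\xi_{\ModHybFun(\onsOCst)}(\onsOCKP)$ inside $\ModHybFun(\onsOCst)$.

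The remaining task is to describe this $\xi$-closure explicitly, i.e.\ to determine which symbols $\sur{V_1\cdots V_a}{g}[C]$ of $\ModHybFun(\onsOCst)$ arise from iterated contractions of generators in $\onsOCKP$. Here I would run through Proposition~\ref{PROOperationsInModEnv}: every element is $\oxi{p'_1 p''_1}\cdots\oxi{p'_n p''_n}(y)$ with $y \in \onsOCKP$, so I need to check which nests $V_i = \sur{\sfO_i}{g_i}$ and which total data $(g, C)$ can be reached while keeping the seed $y$ stable and in $\onsOCKP$. Since contractions only raise genus and never create the discarded ``all-empty-pancakes'' patterns of types (i)--(iii) from a $\onsOCKP$ seed — the three discarded families are precisely the minimal unstable-like open configurations that cannot be a contraction of something already in $\onsOCKP$ — the image consists exactly of the stable symbols minus those three families. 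I would confirm by Lemma~\ref{Prijedu_zase_pristi_rok?} that stability is preserved under contraction, so the stability bound~(\ref{eq:2}) is inherited, and then match the excluded symbols (i)--(iii) in the statement against the discarded generators of Example~\ref{KP}.

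The main obstacle I anticipate is the case analysis verifying condition (i) of Proposition~\ref{THMSuffCondOnInjectivity} for the $\onsOCKP \subset \onsOCst$ inclusion: unlike the stable case, where the unstable pieces being glued were just the six elements of~(\ref{REMListOfUnstables}), here the discarded families (i)--(iii) are infinite and involve varying numbers of empty pancakes, closed inputs, and a possibly nonempty $V$, so I must check carefully that gluing any such discarded factor to an arbitrary $\onsOCst$-element never yields a $\onsOCKP$-element, and that when the composite \emph{is} in $\onsOCKP$ the relabelling bijection genuinely fixes $R\sqcup S$. Once that bookkeeping is in place, the identification of the closure with the asserted subhybrid is a direct translation of the three bulleted exclusions and requires no further ideas.
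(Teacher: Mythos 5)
Your overall architecture — verify the hypotheses of Proposition~\ref{THMSuffCondOnInjectivity} for $\onsOCKP\subset\onsOCst$, apply Proposition~\ref{CorConseqInjOnMods} to identify $\ModHybFun(\onsOCKP)$ with the relative $\xi$-closure $\xi_{\ModHybFun(\onsOCst)}(\onsOCKP)$, then describe that closure explicitly — is the same as the paper's, but the step you dismiss as bookkeeping is exactly where your argument breaks. You claim that whenever one factor, say $w''$, is a discarded element of $\onsOCst$, the composite $w'\ooo{q'}{q''}w''$ is itself discarded, so these cases ``are ruled out.'' This is false: take $w' = \sur{\cycs{q'p'}\cycs{yz}}{0}[\emptyset]\in\onsOCKP$ and $w'' = \sur{\cycs{p''q''}\cycs{}}{0}[\emptyset]$, which is stable but of discarded type~(ii); then $w'\ooo{q'}{q''}w'' = \sur{\cycs{p'p''}\cycs{yz}\cycs{}}{0}[\emptyset]$ has two nonempty pancakes and therefore lies in $\onsOCKP$. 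Such mixed gluings are precisely the substance of the verification: the paper shows the discarded factor must have the form $\sur{\sfo\,\cycs{}\cdots\cycs{}}{0}[\emptyset]$ with $p'',q''\in\sfo$, and then splits into two regimes. When $\sfo=\cycs{p''q''}$ one indeed gets the relabelling bijection of condition~(i), but when $|\sfo|\geq 3$ condition~(i) genuinely fails (the two composites distribute the fixed elements of $\sfo$ and of the cycles of $w'$ over different pancakes, which no bijection fixing $R\sqcup S$ can repair), and one must invoke condition~(ii), refactoring through $w'_1\in\onsOCKP$ that absorbs all the empty pancakes of $w''$ and $w''_1=\sur{\,\sfo\,}{0}[\emptyset]$. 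Your proposal anticipates only the bijection case, so it cannot cover this.

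The second half is also under-justified, and your heuristic ``contractions only raise genus'' is simply not true here: a contraction of two open inputs lying in the \emph{same} nest leaves the outer genus $g$ unchanged. The identification of the closure with the asserted collection $\oM$ requires two separate arguments. For $\xi_{\ModHybFun(\onsOCst)}(\onsOCKP)\subset\oM$ one must check that $\oM$ is closed under all $\circ$-operations and contractions of $\ModHybFun(\onsOCst)$ — a case analysis over the three discarded families (tracking how the numbers of nests add under open and closed gluings, and what the possible preimages of a discarded symbol under a contraction are), which occupies the first part of the paper's proof — and then invoke minimality of the $\xi$-closure. For $\oM\subset\xi_{\ModHybFun(\onsOCst)}(\onsOCKP)$ one needs, for each $x=\sur{V_1\cdots V_a}{g}[\zvedak C]\in\oM$, an explicit presentation as an iterated contraction of a seed in $\onsOCKP$: the paper uses $\oQnsO\cong\nsoMod(\nsoAss)$ to write each nest $V_i$ as an iterated contraction $\xi_{V_i}(\sfo_{V_i})$ of a single cycle, as in~(\ref{EQDecompositionVirtual}), so that $x$ is a contraction of $\sur{\sfo_{V_1}\cdots\sfo_{V_a}}{0}[\zvedak C\sqcup\stt{p'_1,p''_1,\ldots,p'_g,p''_g}]$, and then verifies case by case ($g\geq 1$; $g=0$, $|C|\geq 1$; $g=0$, $C=\emptyset$) that this seed lies in $\onsOCKP$ whenever $x\in\oM$. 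Without the counterexample-proof gluing analysis and without this decomposition, your conclusion that the closure ``consists exactly of the stable symbols minus those three families'' is an assertion, not a proof.
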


\begin{proof}
Denote by $\oM$ the subcollection of  $\ModHybFun(\onsOCst)$
specified in the proposition. We need to prove that
$\oM \cong \ModHybFun(\onsOCKP)$. 
Our strategy will be first to show that $\oM$ is indeed a modular
subhybrid of  $\ModHybFun(\onsOCst)$,
then verify the assumptions of Proposition~\ref{THMSuffCondOnInjectivity}, 
apply Proposition~\ref{CorConseqInjOnMods} 
and finally check directly that the $\xi$-closure of $\onsOCKP$ is $\oM$.

\noindent 
{\it Verification that $\oM$ is a modular 
subhybrid of\/ $\ModHybFun(\onsOCst)$.} Let us check first that  $\oM$
is closed under the $\circ$-operations.
Assume that $x = y \ooo{p'}{p''} z$ for some $x,y,z\in\ModHybFun(\onsOCst)$.
We must show that, if $x \not \in \oM$, then either $y \not\in \oM$ 
or  $z \not\in \oM$. Denote by $a_x$, $a_y$ and $a_z$ the number of
nests in $x$, $y$ and $z$, respectively. We distinguish three
cases.

\noindent 
{\it The element $x$ is of type (i).} 
If $p',p''$ are open
inputs, then clearly the only possibility is that
\[
y=\sur{\sur{\cycs{}}{0} \cdots \sur{\cycs{}}{0}  
\sur{\cycs{p'}}{0} }{\zvedak 0}[\zvedak \emptyset] \and 
z=\sur{\sur{\cycs{}}{0} \cdots \sur{\cycs{}}{0}  
\sur{\cycs{p''}}{0} }{\zvedak 0}[\zvedak \emptyset].
\]
The numbers of nests are related by $a_x = a_y + a_z -1$, therefore $x
\not \in \oM$ if and only if 
\begin{equation}
\label{jakoby_to_koncilo}
a_y + a_z  \geq 4.
\end{equation}
On the other hand,  $y \not \in \oM$ (resp.~$z \not \in \oM$) if and
only if $a_y \geq 2$ (resp.~$a_z \geq 2$),
so~(\ref{jakoby_to_koncilo}) implies that at least one of $y$, $z$
does not belong to $\oM$.
If $p',p''$ are closed, then obviously
\[
y=\sur{\sur{\cycs{}}{0} \cdots \sur{\cycs{}}{0}  }{0}[\zvedak \stt {p'}] \and 
z=\sur{\sur{\cycs{}}{0} \cdots \sur{\cycs{}}{0}  }{0}[\zvedak \stt{p''}]. 
\]
Now  $a_x = a_y + a_z$, so $x
\not \in \oM$ if and only if 
$
a_y + a_z  \geq 3
$
and we conclude as in the open case that either $a_y \geq 2$ or $a_z
\geq 2$.

\noindent 
{\it The element $x$ is of type (ii).} 
If $p'$, $p''$ are open inputs, one has two possibilities. The first
one is that 
\[
y=\sur{\sur{\cycs{}}{0} \cdots \sur{\cycs{}}{0}  
\sur{\cycs{p'}}{0}\, V\, }{\zvedak 0}[\emptyset] \and 
z=\sur{\sur{\cycs{}}{0} \cdots \sur{\cycs{}}{0}  
\sur{\cycs{p''}}{0} }{\zvedak 0}[\emptyset]
\]
(or the r\^oles of $y$ and $z$ interchanged). Then $z \not\in \oM$ and
we are done. The second option is
\[
y=\sur{\sur{\cycs{}}{0} \cdots \sur{\cycs{}}{0}  
\, V_y\, }{\zvedak 0}[\emptyset] \and 
z= \sur{\sur{\cycs{}}{0} \cdots \sur{\cycs{}}{0}  
\, V_z\, }{\zvedak 0}[\emptyset]
\]
for some $V_y,V_z\in\oQnsO$, both having at least one input,   
such that $V_y\ooo{p'}{p''}V_z=V$. We
easily verify that $x \not \in \oM$ if and only
if~(\ref{jakoby_to_koncilo}) holds which implies, as before, that
either $y$ or $z$ does not belong to $\oM$.
In case of closed inputs, the only possibility is
\[
y=\sur{\sur{\cycs{}}{0} \cdots \sur{\cycs{}}{0}  
\, V\, }{\zvedak 0}[\zvedak \stt {p'}] \and 
z= \sur{\sur{\cycs{}}{0} \cdots \sur{\cycs{}}{0}  
}{\zvedak 0}[\zvedak \stt {p''}]
\]
(or $y$ and $z$ interchanged). We see right away that $z \not\in \oM$.

\noindent 
{\it The element $x$ is of type (iii).} 
If $p'$ and $p''$ are open inputs, then
\[
y=\sur{\sur{\cycs{}}{0} \cdots
  \sur{\cycs{}}{0}\sur{\cycs{p'}}{0}}{\zvedak 0}[\zvedak \stt d] 
\and 
z=\sur{\sur{\cycs{}}{0} \cdots
  \sur{\cycs{}}{0}\sur{\cycs{p''}}{0}}{\zvedak 0}[\zvedak \emptyset]
\]
(or vice versa). If they are closed, the only possibility is
\[
y=\sur{\sur{\cycs{}}{0} \cdots
  \sur{\cycs{}}{0}}{\zvedak 0}[\zvedak \stt {d p'}] 
\and 
z=\sur{\sur{\cycs{}}{0} \cdots
  \sur{\cycs{}}{0}}{\zvedak 0}[\zvedak \stt {p''}]
\]
(or vice versa). In both cases $z \not\in \oM$.

It remains to verify that $\oM$  is closed under contractions.
Let $x=\oxi{p'p''}y$ for some elements 
$x,y\in \ModHybFun(\onsOCst)$. We must show that $x
\not\in \oM$ implies  $y \not\in \oM$. If $x$ is of type
(i), there are only three thinkable  candidates for $y$, namely
\[
\sur{\sur{\cycs{}}{0} 
\cdots
\sur{\cycs{}}{0}\sur{\cycs{p'}}{0}\sur{\cycs{p''}}{0}}{\zvedak 0}[\zvedak\emptyset], \
\sur{\sur{\cycs{}}{0} \cdots
  \sur{\cycs{}}{0}\sur{\cycs{p'}\cycs{p''}}{0}}{\zvedak
  0}[\zvedak\emptyset] 
\ \hbox { or } \
\sur{\sur{\cycs{}}{0} \cdots
  \sur{\cycs{}}{0}}{\zvedak 0}[\zvedak\stt {p',p''}].
\]
The respective values of the contraction $\oxi{p'p''}y$  are
\[
\sur{\sur{\cycs{}}{0} \cdots
  \sur{\cycs{}}{0}}{\zvedak 1}[\zvedak\emptyset], \
\sur{\sur{\cycs{}}{0} \cdots
  \sur{\cycs{}}{1}}{\zvedak 0}[\zvedak\emptyset] \and
\sur{\sur{\cycs{}}{0} \cdots
  \sur{\cycs{}}{0}}{\zvedak 1}[\zvedak\emptyset]
\]
which excludes this possibility. The situation when $x$ is of
type~(iii) is similar. 

Assume finally that $x$ is of type (ii). Besides
the candidates for $y$ similar to the ones above, there are also
\[
\sur{\sur{\cycs{}}{0} \cdots
  \sur{\cycs{}}{0}\, V_1\, V_2}{\zvedak0}[\zvedak\emptyset]
\hbox { with } V_1\ooo{p'}{p''}V_2=V, \and
\sur{\sur{\cycs{}}{0} \cdots
  \sur{\cycs{}}{0}\, W}{\zvedak 0}[\zvedak \emptyset] 
\hbox { with } \oxi{p'p''}W=V.
\] 
For the first candidate
\[
\oxi{p'p''}y=\sur{\sur{\cycs{}}{0} \cdots
  \sur{\cycs{}}{0}\, V}{\zvedak 1}[\zvedak \emptyset]
\]
while the second candidate does not belong to $\oM$. This finishes the
verification that $\oM$ is a modular 
subhybrid of\/ $\ModHybFun(\onsOCst)$.

\noindent 
{\it Verifying assumptions of Proposition~\ref{THMSuffCondOnInjectivity}.}
Let $w',w''\in \onsOCst$ be such that $w'\ooo{q'}{q''}w''\in\onsOCKP$.
If both $w',w''\in\onsOCKP$, there is nothing to verify.
If both  $w',w''\not \in \onsOCKP$, 
then it is easy to check  
that also $w'\ooo{q'}{q''}w''\not\in  \onsOCKP$, 
so the only interesting case is when precisely one of $w'$ and $w''$ does
not belong to  $\onsOCKP$.

Assume therefore that $w'\in\onsOCKP$ but $w''\not\in\onsOCKP$.
Since $w''$ has to have at least two
inputs $p''$ and $q''$, it
must be of type $(ii)$ in the classification of
Example~\ref{KP}.  This leaves us with two possibilities.

\noindent 
{\it Case 1:}
$w'=\sur{\sfo_1 \sfo_2 \sfo_3 \,\cdots\, \sfo_b}{0}[\zvedak C]$, 
$w''=\sur{\sfo\,  \cycs {}\, \cdots\, \cycs{}}{0}[\zvedak \emptyset]$, 
$p'\in \sfo_1$, $q'\in \sfo_2$ and  $p'',q''\in \sfo$. 
If it is so, then
\[
w' \ooo {q'}{q''} w'' = \sur{\, \sfo_1 (\sfo_2\ooo {q'}{q''} \sfo )\,
 \sfo_3 \,\cdots\, \sfo_b\, \cycs {}\, \cdots\,
  \cycs{}}{0}[\zvedak C]  \and
w' \ooo {p'}{p''} w'' = \sur{(\sfo_1\ooo {p'}{p''} \sfo )\, \sfo_2
 \sfo_3 \,\cdots\, \sfo_b\, \cycs {} \,\cdots\,
  \cycs{}}{0}[\zvedak C] .
\]
If $|\sfo|\geq 3$, then the 
assumption~(ii) of  Proposition~\ref{THMSuffCondOnInjectivity} is
satisfied with
\[
w_1'=\sur{\,\sfo_1 \sfo_2\, \sfo_3 \cdots \sfo_b \, \cycs {} \cdots
  \cycs{}}{0}[C]
\ \hbox { and } \
w_1''=\sur{\, \sfo {} \, }{0}[\emptyset]
\]
where $w_1'$ absorbed all empty cycles of $w''$. If $\sfo =
\cycs{p''q''}$, such $w''_1$ is not stable. We however have
\[
w' \ooo {q'}{q''} w'' = \sur{\, \sfo_1 (\sfo_2\ooo {q'}{q''} \cycs{p''q''} )\,
 \sfo_3\, \cdots\, \sfo_b\, \cycs {} \,\cdots\,
  \cycs{}}{0}[\zvedak C], \
w' \ooo {p'}{p''} w'' = \sur{(\sfo_1\ooo {p'}{p''} \cycs{p''q''} )\, \sfo_2
 \sfo_3\, \cdots\, \sfo_b\, \cycs {} \,\cdots\,
  \cycs{}}{0}[\zvedak C] 
\]
so we notice, 
as in the proof of Proposition~\ref{THMModOCStable},
that $w'\ooo{q'}{q''}w''$ replaces $q'$ by $p''$
and  $w'\ooo{p'}{p''}w''$ replaces $p'$ by $q''$ in $w'$.  Therefore
$w'\ooo{q'}{q''}w'=\rho(w'\ooo{p'}{p''}w'')$ 
for a bijection $\rho$ mapping $\stt{p',p''}$ to $\stt{q',q''}$ 
and restricting to the identity elsewhere. 
Assumption~(i) of  Proposition~\ref{THMSuffCondOnInjectivity} is thus
satisfied. 

\noindent 
{\it Case 2:}
$w'=\sur{\sfo_1 \sfo_2 \,\cdots \,\sfo_b}{0}[\zvedak C]$, 
$w''=\sur{\sfo\,  \cycs {}\, \cdots\, \cycs{}}{0}[\zvedak \emptyset]$, 
$p',q'\in \sfo_1$, and  $p'',q''\in \sfo$. 
Then we calculate
\[
w' \ooo {q'}{q''} w'' = \sur{\,  (\sfo_1\ooo {q'}{q''} \sfo )\,
 \sfo_2\, \cdots\, \sfo_b\, \cycs {}\, \cdots\,
  \cycs{}}{0}[\zvedak C]  \and
w' \ooo {p'}{p''} w'' = \sur{(\sfo_1\ooo {p'}{p''} \sfo )\, 
 \sfo_2 \,\cdots\, \sfo_b\, \cycs {} \,\cdots\,
  \cycs{}}{0}[\zvedak C] .
\]
If $\sfo \not = \cycs{p''q''}$, the assumption~(ii) of 
Proposition~\ref{THMSuffCondOnInjectivity} is satisfied with
\[
w_1'=\sur{\,\sfo_1 \, \sfo_2 \cdots \sfo_b \, \cycs {} \cdots
  \cycs{}}{0}[\zvedak C]
\ \hbox { and } \
w_1''=\sur{\, \sfo \, }{0}[\zvedak \emptyset].
\]
If $\sfo = \cycs{p''q''}$, we argue precisely as in the first case.

This finishes the verification of assumptions of
Proposition~\ref{THMSuffCondOnInjectivity}.
Proposition~\ref{CorConseqInjOnMods} now
implies
\[
\ModHybFun(\onsOCKP) = 
\xi_{\ModHybFun(\onsOCst)}(\onsOCKP).
\]
Since we already know that $\oM$ is a modular subhybrid of 
$\ModHybFun(\onsOCst)$, the minimality of the $\xi$-closure
implies the inclusions
\[
\ModHybFun(\onsOCKP) = 
\xi_{\ModHybFun(\onsOCst)}(\onsOCKP)  \subset 
 \oM \subset \ModHybFun(\onsOCst).
\]
It therefore remains to:

\noindent 
{\it  Verify that 
$\oM \subset \xi_{\ModHybFun(\onsOCst)}(\onsOCKP)$.}
We know by~\cite{nsmod} that $\oQnsO \cong \nsoMod(\nsoAss)$.
For any $V \in   \oQnsO$ therefore exists a (non-unique) $\sfo_V \in
\nsoAss$ such that 
\begin{equation}
\label{Jarka_nachlazena}
V = \oxi {q_1'q_1''} \cdots \oxi {q_t'q_t''}(\sfo_V)
\end{equation} 
for some $q_1',q_1'',  \ldots, q_t',q_t'' \in \sfo_V$. To save
space, we will denote the iterated contraction
in~(\ref{Jarka_nachlazena}) by $\xi_V$;~(\ref{Jarka_nachlazena}) will
then read $V =\xi_V (\sfo_V)$.
With this notation, we have in $\ModHybFun(\onsOC)$ the equality
\begin{gather} 
\label{EQDecompositionVirtual}
\sur{V_1 \cdots V_a}{g}[\zvedak C] =
\oxi{p'_1p''_1}\cdots\oxi{p'_gp''_g} \, \xi_{V_1} \cdots \xi_{V_a} 
\sur{ \sur{\, \sfo_{V_1}}{0}\ \cdots  \ \sur{\,
    \sfo_{V_a}}{0}}{0}[\zvedak C \sqcup \stt{p'_1,p''_1,\ldots,p'_g,p''_g}]
\end{gather}
along with the identification
\begin{equation}
\label{nejde_internet}
\ModHybFun(\onsOC) \ni
\sur{ \sur{\, \sfo_{V_1}}{0}\ \cdots  \ \sur{\,
    \sfo_{V_a}}{0}}{0}[\zvedak C\, \sqcup\,
\stt{p'_1,p''_1,\ldots,p'_g,p''_g}] = 
\sur{ \sfo_{V_1}\ \cdots  \ 
    \sfo_{V_a}}{0}[\zvedak C\, \sqcup \,
  \stt{p'_1,p''_1,\ldots,p'_g,p''_g}]
\in \onsOC
\end{equation}
provided by the unit~(\ref{v_Koline_behem_soustredeni}).

Denote the left hand side of~(\ref{EQDecompositionVirtual}) and the
element in~(\ref{nejde_internet}) by $y$.
If $x$ is stable, then so is $y$ by
Lemma~\ref{Prijedu_zase_pristi_rok?},
so~(\ref{EQDecompositionVirtual}) in fact holds in $\ModHybFun(\onsOCst)$.
We need to show that if $x\in \oM$, then $y\in\onsOCKP$.

If $g\geq 1$, then $y$ has at least two closed inputs,  thus $y\in\onsOCKP$.
Suppose that $g=0$ and $|C|\geq 1$. If $y\not\in\onsOCKP$, 
then $|C|=1$ and $\sfo_1 = \cdots = \sfo_a =\sur{\cycs{}}{0}$, so 
$V_1=\cdots= V_a =\sur{\cycs{}}{0}$, 
hence $x\not\in\oOC_\KP$, which contradicts the assumption. 
The last case to be analyzed is  
$g=0$ and $|C|=0$. Then at least two $V_i$'s, say $V_1$ and $V_2$,
have at least one input, otherwise $x \not\in \oM$.
So the same is true for $\sfo_{V_{1}}$ and $\sfo_{V_{1}}$, thus $y\in\oOC_\KP$.
\end{proof}

To sum up, at this moment we know that the sequence of
inclusions~(\ref{vcera_jsem_jel_na_brsulickach}) induce inclusions
\[
\ModHybFun(\onsOC_\KP) 
\hookrightarrow \ModHybFun(\onsOCst) \hookrightarrow \ModHybFun(\onsOC),
\]
We also know that
\[
\xi_{\ModHybFun(\onsOCst)}(\onsOC_\KP) \cong \ModHybFun(\onsOC_\KP)
\and
\xi_{\ModHybFun(\onsOC)}(\onsOC_\KP) \cong  \ModHybFun(\onsOC)
\]
while the isomorphism $\xi_{\ModHybFun(\onsOCst)}(\onsOC_\KP) \cong 
\xi_{\ModHybFun(\onsOC)}(\onsOC_\KP)$ is immediate.

\begin{definition}
\label{za_chvili_s_Nunykem_na_veceri}
The {\em Kaufmann-Penner modular hybrid\/} $\oQnsOCKP$ is the modular
subhybrid of $\oQnsOCst$ generated by $\onsOCKP$, i.e.\ $\oQnsOCKP :=
\xi_{\oQnsOCst}(\onsOCKP)$.
\end{definition}

A more intelligent description of  $\oQnsOCKP$ will be given
in Theorem~\ref{THMPresentationOfQOCKP} below.
The linearization of the Kaufmann-Penner hybrid 
$\oQnsOCKP$ is in fact isomorphic to the
homology of the arc operad $\widetilde{{\it Arc}}$ of
\cite[page~346]{kaufmann-penner:NP06}, whence its name.  
Notice that  $\oQnsOCKP$ 
contains the \gg\ stable cyclic operad $\oComst$ from
Example~\ref{ze_ja_vul_jsem_zacal_psate_ten_grant_stable} 
as a suboperad of elements with $g = b =0$, 
i.e.\ elements of the form
\begin{equation}
\label{eq:6}
\sur{\varnothing}{0}[\korekce C] , \ C \in \Fin.
\end{equation}   
Let us prove a variant of
Theorems~\ref{PROCUnivProp} and~\ref{PROOCUnivPropST} for it.

\begin{theorem} 
\label{THMOCUnivPropKP}
Let\/ $\EuScript{I}$ denote the ideal in the modular hybrid 
$\ModHybFun(\onsOCKP)$ generated by the relation
\begin{equation}
\label{jsem_zmitan}
\sur{ \sur{\cycs{q}}{0} \sur{\cycs{r}}{0} }{\zvedak 0}[\zvedak \emptyset] = \sur{
  \sur{\cycs{q}\cycs{r}}{0} }{\zvedak 0}[\zvedak \emptyset].
\end{equation}
Then
\[
\oQnsOCKP \ \cong\ \ModHybFun(\onsOCKP) /\, \EuScript{I}.
\]
Therefore, 
for any modular hybrid $\oH$ and a morphism $F:\onsOCKP\to\oH$ such
that 
\[
\oxi{uv}F\sur{\cycs{uqvr}}{0}[\emptyset] = F\sur{\cycs{q}\cycs{r}}{0}[\emptyset],
\]
there is a unique morphism $\hat{F}:\oQnsOCKP\to\oH$ of modular hybrids
making the diagram 
\[
\xymatrix@C4em{
\onsOCKP\ \ar@{^{(}->}[r] \ar_{F}[dr]  & 
\oQnsOCKP \ar@{-->}^(.5){\hat{F}}[d]
\\ &
\oH 
}
\]
commutative.
\end{theorem}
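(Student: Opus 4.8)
The plan is to follow verbatim the template of the proofs of Theorems~\ref{PROCUnivProp} and~\ref{PROOCUnivPropST}, recording only the modifications forced by the Kaufmann--Penner restrictions. I would establish the isomorphism $\oQnsOCKP \cong \ModHybFun(\onsOCKP)/\EuScript{I}$ by exhibiting the same pair of mutually inverse morphisms as in the proof of Theorem~\ref{PROCUnivProp}: the collapsing map
\[
\alpha:\ \sur{V_1\cdots V_a}{g}[\podpera C]\longmapsto
\sur{\sfO_1\cdots\sfO_a}{\velkyzvedak g+\sum_{i=1}^a g_i}[\podpera C],
\]
together with $\sur{\varnothing}{g}[\podpera C]\mapsto\sur{\varnothing}{g}[\podpera C]$, and the single-nest map $\beta:\sur{\sfo_1\cdots\sfo_b}{g}[\podpera C]\mapsto\sur{\sur{\sfo_1\cdots\sfo_b}{g}}{0}[\podpera C]$. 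The identity $\alpha\beta=\id$ is immediate, so the whole content is the well-definedness of the two maps into the Kaufmann--Penner objects together with the relation $\beta\alpha=\id$ modulo the single generator~\eqref{jsem_zmitan}, followed by the formal deduction of the universal property.

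For well-definedness I would use the explicit description of $\ModHybFun(\onsOCKP)$ from Proposition~\ref{leze_na_mne_chripka} and of $\oQnsOCKP=\xi_{\oQnsOCst}(\onsOCKP)$ from Definition~\ref{za_chvili_s_Nunykem_na_veceri}. For $\beta$ this is painless: a single-nest symbol has $a=1$ and can therefore never be one of the discarded types (i)--(iii), all of which require $a\ge 2$, so $\beta$ indeed lands in $\ModHybFun(\onsOCKP)$. For $\alpha$ I must check that collapsing a stable, non-discarded nested symbol produces an element of $\xi_{\oQnsOCst}(\onsOCKP)$; here I would reuse the decomposition~\eqref{EQDecompositionVirtual} from the proof of Proposition~\ref{leze_na_mne_chripka}, which realizes the collapse as an iterated contraction of a genuine $\onsOCKP$-generator $y$, so that membership in the $\xi$-closure is automatic.

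The heart of the matter is $\beta\alpha=\id$, i.e.\ the normal-form identity $\sur{V_1\cdots V_a}{g}[C]=\sur{\sur{\sfO_1\cdots\sfO_a}{g+\sum g_i}}{0}[C]$ modulo $\EuScript{I}$, which I would prove by the same three-step scheme~\eqref{EQCardyOCOne},~\eqref{EQCardyOCTwo},~\eqref{EQCardyOCThree} as in Theorem~\ref{PROCUnivProp}: merge the nests one at a time and then absorb the geometric genus. The crucial point -- and the reason a \emph{single} relation now suffices, in contrast to the two relations of Theorem~\ref{PROOCUnivPropST} -- is that the auxiliary second stable relation has left-hand side $\sur{\sur{\cycs{q}}{0}\sur{\cycs{}}{0}}{0}[\emptyset]$, which is precisely a discarded symbol of type (ii) and hence \emph{does not live} in $\ModHybFun(\onsOCKP)$; consequently it can never be invoked, and every merge that is genuinely needed involves nests carrying enough content for the Cardy generator~\eqref{jsem_zmitan} to apply directly.

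The main obstacle I anticipate is the bookkeeping: I must confirm that every intermediate expression occurring in the merges~\eqref{EQCardyOCOne1}--\eqref{EQCardyOCOne2},~\eqref{pisu_v_Koline} and~\eqref{Jitka_mi_psala} remains inside $\ModHybFun(\onsOCKP)$, that is, stays stable (by Lemma~\ref{Prijedu_zase_pristi_rok?}) and avoids types (i)--(iii). Since a non-discarded nested symbol may still carry empty nests $\sur{\cycs{}}{0}$, I expect to reroute the computation -- in the spirit of the case analysis on the unstable terms $\Rrom A,\Rrom B,\Rrom D$ in the proof of Theorem~\ref{PROOCUnivPropST} -- whenever a naive merge would momentarily produce a type-(i)/(ii)/(iii) configuration, choosing instead an order of merging that keeps the required number of nonempty cycles or closed inputs present at every stage. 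Once this is settled, the universal property follows formally: extend $F$ to $\tilde{F}:\ModHybFun(\onsOCKP)\to\oH$ by the universal property of the Kaufmann--Penner modular \envelope\ (Proposition~\ref{leze_na_mne_chripka}), verify that $\tilde{F}$ annihilates the generator of $\EuScript{I}$ using the Cardy hypothesis on $F$ exactly as in the final display of the proof of Theorem~\ref{PROCUnivProp}, and descend to the unique $\hat{F}$, uniqueness being forced by surjectivity of the projection $\ModHybFun(\onsOCKP)\twoheadrightarrow\oQnsOCKP$.
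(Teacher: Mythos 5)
Your proposal is correct and takes essentially the same route as the paper: the same mutually inverse maps $\alpha$ and $\beta$, the same observation that single-nest symbols (having $a=1$) and the $\xi$-closure description make both maps land where they should, the same explanation of why only one relation survives (the left-hand side of the second stable relation is a discarded type-(ii) symbol, so it cannot even be stated in $\ModHybFun(\onsOCKP)$), and the same resolution of the key obstacle by rerouting decompositions so that every intermediate term stays inside $\ModHybFun(\onsOCKP)$. The paper merely packages your ``rerouting'' into one explicit statement --- trivial nests $\sur{\cycs{}}{0}$ can be absorbed into a nontrivial nest modulo $\EuScript{I}$, equation~\eqref{EQAbsorbEmptyNest}, proved by a four-case analysis on $g$ and $|C|$ --- after which the stable argument of Theorem~\ref{PROOCUnivPropST} applies verbatim to trivial-nest-free elements, exactly as you anticipate.
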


An immediate consequence of this theorem combined with
Proposition~\ref{leze_na_mne_chripka} is that the
symbol~(\ref{dnes_s_Jarcou_na_vyhlidku}) with $g\geq 1$ belongs to
$\oQnsOCKP$ if and only if it is stable, i.e.\ if
either of $b$, $|O|$ or $|C|$ is nonzero. 

\begin{proof}[Proof of Theorem~\ref{THMOCUnivPropKP}]
It is clear that $\oQnsOCKP$ is  isomorphic to the $\xi$-closure of
$\onsOCKP$ in  $\oQnsOC$, therefore the 
isomorphism~(\ref{Kdy se s tim prestanu trapit?}) 
identifies $\ModHybFun(\onsOC_\KP)$ with
$\oQnsOCKP$. 
The proof therefore goes along the similar lines as the proof of 
Theorem~\ref{PROCUnivProp}, so we only highlight the differences.

We must again be aware that some terms in~(\ref{pisu_v_Koline}) which we
used to prove~(\ref{EQCardyOCTwo}) may not belong to
$\ModHybFun(\onsOCKP)$. In the proof of Theorem~\ref{PROOCUnivPropST}
we explained how to avoid appearances of unstable terms. The remaining terms
outside $\ModHybFun(\onsOCKP)$ will be eliminated by absorbing 
trivial nests  $\sur{\cycs{}}{0}$.

By this we mean that, for arbitrary nontrivial nests $V_1,\ldots,V_a
\neq\sur{\cycs{}}{0}$, we prove the following equality modulo~$\EuScript{I}$
\begin{gather} 
\label{EQAbsorbEmptyNest}
\sur{\sur{\cycs{}}{0}\cdots\sur{\cycs{}}{0} V_1\cdots V_a}{\zvedak g}[\zvedak C] 
= \sur{\widetilde {V}_1 V_2\cdots V_a}{\zvedak g}[\zvedak C],
\end{gather}
where 
\[
\widetilde{\zvedak V}_1:=\sur{\cycs{}\cdots\cycs{}\,
  \sfo_1\, \cdots \,\sfo_b}{\zvedak g_1}
\] 
if $V_1=\sur{\sfo_1\, \cdots \,\sfo_b}{g_1}$.
Assuming this, it suffices to prove \eqref{EQCardyOCTwo} for elements 
not containing a trivial 
nest~$\sur{\cycs{}}{0}$, which proceeds as in the proof of
Theorem~\ref{PROOCUnivPropST}.
To verify \eqref{EQAbsorbEmptyNest}, it suffices to prove~that
\begin{equation}
\label{Je_to_s_nama_jako_v_tom_filmu.}
\sur{\sur{\cycs{}}{0} V_1\cdots V_a}{\zvedak g}[\zvedak C] = \sur{\widetilde {V}_1
  V_2\cdots V_a}{\zvedak g}[\zvedak C]    \ \hbox{ modulo } \ \EuScript{I}
\end{equation}
for arbitrary $V_1, \ldots , V_a$ such that the 
left hand side of~(\ref{Je_to_s_nama_jako_v_tom_filmu.}), which
we denote by $x$, belongs to $\ModHybFun(\onsOCKP)$.
In the following calculation we denote, for $V = \sur{\sfO}{g} \in
\oQnsO$ and an independent symbol $p$, by $pV$ a nest of the form $V =
\sur{p\,\sfO}{g}$, where $p\,\sfO$ is an extension of the multicycle
$\sfO$ introduced in the proof of Proposition~\ref{THMModOCus}. 
We distinguish four cases.

\noindent 
{\it Case 1: $g\geq 1$.}
If  $x$ has exactly two nests, we use the decomposition
\begin{equation}
\label{eq:3}
\sur{\sur{\cycs{}}{0}\, V\, }{\zvedak g}[\zvedak C] = \oxi{r'r''} \left(
  \sur{\sur{\cycs{r'}}{0} \sur{\cycs{p'}}{0}}{\zvedak 0}[\zvedak \emptyset] \ooo{p'}{p''}
  \sur{\sur{\cycs{p''}}{0}\sur{\cycs{q'}}{0}}{\zvedak 0}[\zvedak \emptyset] \ooo{q'}{q''}
  \sur{q'r''V}{\zvedak g-1}[\zvedak C] \right).
\end{equation}
Applying relation~(\ref{jsem_zmitan}) 
to the middle term of the right hand side, we get
\begin{align*}
\sur{\sur{\cycs{}}{0}\,V\,}{\zvedak g}[\zvedak C] &= \oxi{r'r''} \left(
  \sur{\sur{\cycs{r'}}{0} \sur{\cycs{p'}}{0}}{\zvedak 0}[\zvedak\emptyset] 
\! \ooo{p'}{p''} \!
  \sur{
\sur{\cycs{p''} \cycs{q'}}{0} 
}{\zvedak 0}[\zvedak \emptyset]  \!\ooo{q'}{q''} \!
  \sur{q'r''V}{\zvedak g-1}[\zvedak C] \right)
=  \oxi{r'r''} \sur{\sur{\cycs{r'}\cycs{}}{0} \sur{r''V}{0}}{\zvedak
  g-1}[\zvedak  C] 
=  \sur{\widetilde {V}}{\zvedak g}[\zvedak C]. 
\end{align*}
The only term in the right hand side of~(\ref{eq:3}) 
that might not belong to $\ModHybFun(\onsOCKP)$ is the rightmost one.
This happens if and only if  $g=1$, $C=\emptyset$, $V=\sur{\cycs{}}{0}$, 
in which case we verify directly~that
\begin{align*}
\sur{\sur{\cycs{}}{0}\sur{\cycs{}}{0}}{\zvedak 1}[\zvedak \emptyset] &= \oxi{q'q''}
\left( \sur{\sur{\cycs{p'}}{0}\sur{\cycs{q'}}{0}}{\zvedak 0}[\zvedak \emptyset]
  \ooo{p'}{p''} \sur{\sur{\cycs{p''}}{0}\sur{\cycs{q''}}{0}}{\zvedak
    0}[\zvedak  \emptyset]
\right)
\\
& =\oxi{q'q''}
\left( \sur{\sur{\cycs{p'}\cycs{q'}}{0}}{\zvedak 0}[\zvedak \emptyset]
  \ooo{p'}{p''}
 \sur{\sur{\cycs{p''}\cycs{q''}}{0}}{\zvedak 0}[\zvedak \emptyset]
\right)
= 
\oxi{q'q''}\sur{\sur{\cycs{}\cycs{q'}\cycs{q''}}{0}}{\zvedak
  0}[\zvedak 
\emptyset] =
\sur{\sur{\cycs{}\cycs{}}{0}}{\zvedak 1}[\zvedak \emptyset]. 
\end{align*}
If $x$ has at least three nests, we use the decomposition
\[
\sur{\sur{\cycs{}}{0}V_1 \cdots V_a}{\zvedak g}[\zvedak C] = \oxi{r'r''} \left(
  \sur{\sur{\cycs{r'}}{0}\sur{\cycs{p'}}{0}}{\zvedak 0}[\zvedak \emptyset] \ooo{p'}{p''}
  \sur{\sur{\cycs{p''}}{0}\sur{\cycs{q'}}{0}}{\zvedak 0}[\zvedak \emptyset] \ooo{q'}{q''}
  \sur{q''V_1 \, r''V_2\, V_3\cdots V_a}{\zvedak g-1}[\zvedak C] \right),
\]
apply~(\ref{jsem_zmitan}) to the middle term in the right hand
side and proceed as before. In this case 
all terms clearly belong to $\ModHybFun(\onsOCKP)$.

\noindent 
{\it Case 2:
$g=0$ and $|C|\geq 2$.} Let  $d\in C$ and  $C':= C \setminus \stt d$.
Then we use the decomposition
\[
\sur{\sur{\cycs{}}{0}V_1\cdots V_a}{\zvedak 0}[\zvedak C] 
= \sur{\sur{\cycs{p'}}{0}}{\zvedak 0}[\zvedak \stt d] \ooo{p'}{p''}
\sur{\sur{\cycs{p''}}{0}\sur{\cycs{q'}}{0}}{\zvedak 0}[\zvedak 
\emptyset] \ooo{q'}{q''}
\sur{q'V_1 \, V_2    \cdots\, V_a}{\zvedak 0}[\zvedak C'].
\]
All terms in the right hand side obviously belong to $\ModHybFun(\onsOCKP)$.

\noindent 
{\it Case 3:
$g=0$ and $C = \stt {d}$.} We want to decompose
\begin{equation}
\label{eq:5}
\sur{\sur{\cycs{}}{0}V_1\cdots V_a}{\zvedak 0}[\zvedak \stt d] 
= \sur{\sur{\cycs{p'}}{0}}{\zvedak0}[\zvedak\stt d] \ooo{p'}{p''}
\sur{\sur{\cycs{p''}}{0}\sur{\cycs{q'}}{0}}{\zvedak 0}[\zvedak 
\emptyset] \ooo{q'}{q''}
\sur{q'V_1 \, V_2    \cdots\, V_a}{\zvedak 0}[\zvedak\emptyset].
\end{equation}
While the first two terms in the right hand side always belong to
$\ModHybFun(\onsOCKP)$, the last one may be problematic.
Let us discuss the case when $a \geq 2$ first. Since  
$x \in\ModHybFun(\onsOCKP)$, at least one of its nests must differ
from $\sur{\cycs{}}{0}$; we may assume without loss of generality it
is~$V_2$. Then the rightmost term in~(\ref{eq:5})  belongs to
$\ModHybFun(\onsOCKP)$. 

The same is true if $a=1$ and if $q'V_1$ is stable. If it is not
stable, then $V_1$ must be of the form $\sur{\cycs{r}}{0}$ and we
verify directly that
\[
\sur{\sur{\cycs{}}{0} \sur{\cycs{r}}{0}}{\zvedak 0}[\zvedak \stt d] 
=
\sur{\sur{\cycs{p'}}{0} \sur{\cycs{r}}{0}}{\zvedak 0}[\zvedak\emptyset]
\ooo{p'}{p''}\sur{\sur{\cycs{p''}}{0}}{\zvedak0}[\zvedak\stt d] 
=
\sur{\sur{\cycs{p'}\cycs{r}}{0}}{\zvedak0}[\zvedak\emptyset]
\ooo{p'}{p''}\sur{\sur{\cycs{p''}}{0}}{\zvedak0}[\zvedak\stt d] =
\sur{\sur{\cycs{}\,\cycs{r}}{0}}{\zvedak0}[\zvedak\emptyset].
\]

\noindent 
{\it Case 4: $g=0$ and $C= \emptyset $.}
Since $x\in \ModHybFun(\onsOCKP)$, at least two of its nests are
nontrivial, so 
we may assume that
\[
x=\sur{\sur{\cycs{}}{0}V_1V_2\cdots V_a}{\zvedak 0}[\zvedak \emptyset],
\] 
where $V_1,V_2\neq\sur{\cycs{}}{0}$.
If $pV_1$ is stable, we decompose $x$ as
\[
\sur{\sur{\cycs{}}{0}V_1V_2\cdots V_a}{\zvedak 0}[\zvedak \emptyset] 
= \sur{p'V_1}{\zvedak 0}[\zvedak \emptyset] \ooo{p'}{p''}
\sur{\sur{\cycs{p''}}{0}\sur{\cycs{q'}}{0}}{\zvedak 0}[\zvedak \emptyset] \ooo{q'}{q''}
\sur{\sur{\cycs{q''}}{0} V_2 \cdots V_a}{\zvedak 0}[\zvedak \emptyset]
\]
and apply~(\ref{jsem_zmitan}) to the middle term in the right hand
side as before.
If $pV_1$ is not stable, 
then $V_1$ has to be of the form $\sur{\cycs{r}}{0}$ 
and we use instead the decomposition
\[
\sur{\sur{\cycs{}}{0}\sur{\cycs{r}}{0}V_2\cdots V_a}{\zvedak 0}[\zvedak \emptyset] =
\sur{\sur{\cycs{r}}{0}\sur{\cycs{q'}}{0}}{\zvedak 0}[\zvedak \emptyset] \ooo{q'}{q''}
\sur{\sur{\cycs{q''}}{0} V_2 \cdots V_a}{\zvedak 0}[\zvedak \emptyset].
\]
This finishes our verification of~(\ref{EQAbsorbEmptyNest}).
\end{proof}

\section{Finitary presentations}
\label{dnes_prodlouzit_prukaz na_LAA}

The aim of this section is to give an explicit finitary presentation
of the Kaufmann-Penner modular hybrid $\oQnsOCKP$ and derive from it a
description of its algebras.
As the first step we express $\onsOCKP$ in terms of
generators and relations. Recall that the components of a cyclic
hybrid $\oH$ are indexed by couples consisting of a multicycle $\sfO$ and a
finite set $C$. We will call the symbol 
$\sfO \choose C$ the {\em  biarity\/}
of elements in $\oH(\sfO,C)$.

\begin{theorem} 
\label{THMPresentaionOfOC}
The cyclic hybrid $\onsOCKP$ has the following presentation.
The generators are:
\begin{enumerate}
\item[(g1)] 
an `open pair of 
pants' $\mu = \mu^{\cycs{pqr}}_\emptyset$ of biarity $\cyc{pqr}\choose
\emptyset$ with $G=0$, with the trivial action of cyclic
order-preserving automorphisms of $\cycs{pqr}$,
\item[(g2)] 
a `closed pair of pants' $\omega=\omega^\varnothing_{\stt {def}}$ of
biarity $\varnothing \choose {\stt {def}}$
with $G=\frac12$, and the trivial action of the 
group of automorphisms of $\stt {d,e,f}$, and
\item[(g3)] 
a `morphism' $\phi =\phi^{\cycs{p}}_{\stt d}$ with $G= \frac 12$ of
biarity $\cycs{p} \choose {\stt d}$,
\end{enumerate}
subject to the axioms:
\begin{enumerate}
\item[(a1)] 
associativity in open inputs:
	\begin{gather*}
	\mu^{\cycs{pqr}}_\emptyset \ooo{r}{s}
        \mu^{\cycs{stu}}_\emptyset 
= \mu^{\cycs{pru}}_\emptyset \ooo{r}{s} \mu^{\cycs{qts}}_\emptyset,
\end{gather*}
\item[(a2)] 
associativity in closed inputs:
	\begin{gather*}
	\omega^{\varnothing}_{\stt {def}} \ooo{f}{g} 
\omega^{\varnothing}_{\stt {ghi}} =
        \omega^{\varnothing}_{\stt {dfi}} \ooo{f}{g} 
\omega^{\varnothing}_{\stt {ehg}},
\end{gather*}
\item[(a3)] 
morphism property:
	\begin{gather*}
	\phi^{\cycs{p}}_{\stt g} \ooo{g}{f} 
\omega^{\varnothing}_{\stt {def}} =
        \left( \mu^{\cycs{pqr}}_{\emptyset} \ooo{q}{s}
          \phi^{\cycs{s}}_{\stt d} \right) \ooo{r}{t}
        \phi^{\cycs{t}}_{\stt e},\  and
      \end{gather*}
\item[(a4)] 
centrality:
	\begin{gather*}
	\mu^{\cyc{pqr}}_\emptyset \ooo{q}{s} \phi^{\cycs{s}}_{\stt d}
        = \mu^{\cyc{prq}}_\emptyset \ooo{q}{s}
        \phi^{\cycs{s}}_{\stt d}.
      \end{gather*}
    \end{enumerate}
In other words, $\onsOCKP$ is the quotient 
\begin{equation}
\label{pristi_tyden_vyjezdni_zasedani_Brno}
\onsOCKP \ \cong\ \Fr[\underline{c}yc]{E} /\, \EuScript{J}
\end{equation}
of the free cyclic hybrid generated by the collection
$E$ consisting of $\mu$, $\omega$ and $\phi$ as above,
modulo the ideal  $\EuScript{J}$ generated by the relations (ai)-(aiv).    
\end{theorem}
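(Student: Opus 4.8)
The plan is to establish the presentation~(\ref{pristi_tyden_vyjezdni_zasedani_Brno}) by constructing the canonical morphism $\Psi:\Fr[\underline{c}yc]{E}/\EuScript{J}\to\onsOCKP$ and showing it is an isomorphism. First I would verify that the three generators $\mu,\omega,\phi$ actually live in $\onsOCKP$ with the claimed biarities and genera, and that the relations (a1)--(a4) hold in $\onsOCKP$; the relations (a1) and (a2) are just the associativity of the pancake merging~(\ref{eq:1merging}) restricted to the open resp.\ closed color, while (a3) and (a4) follow by direct inspection of the $\circ$-operations in $\oQnsOC$ from Example~\ref{hm}, using that $\oxi{uv}$ on a single pancake is the pancake cutting. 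Once the relations are checked, the universal property of the free cyclic hybrid $\Fr[\underline{c}yc]{-}$ (left adjoint to the forgetful functor, as in Example~\ref{DG_307_v_Akropoli}) yields a well-defined morphism $\Psi$ of cyclic hybrids, and surjectivity of $\Psi$ is the statement that every element~(\ref{zitra_na_houby}) of $\onsOCKP$ is a $\circ$-composition of the generators.

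The heart of the argument is this surjectivity together with injectivity. For surjectivity I would argue by induction on the total complexity (say $|O|+|C|+b$) of a symbol $\sur{\sfo_1\cdots\sfo_b}{0}[\zvedak C]\in\onsOCKP$, using the explicit list of elements belonging to $\onsOCKP$ recorded at the end of Example~\ref{KP}. Each open boundary of positive length is built from copies of $\mu$ by iterated merging (this is essentially the statement $\nsoAss=$ free cyclic operad on $\mu$); each closed input is attached through $\omega$; and the interaction between an open boundary and a closed hole is mediated by $\phi$. The three exclusion types in Example~\ref{KP}---namely $b\ge 3$ empty pancakes, $b\ge 2$ empty pancakes with one nonempty, and $b\ge 2$ empty pancakes with a single closed input---are precisely the low-arity configurations that cannot be assembled from $\mu,\omega,\phi$, which is why they are discarded; I would check case-by-case that every \emph{remaining} biarity admits a decomposition. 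The centrality axiom (a4) is what guarantees the decomposition is consistent regardless of where the closed hole is inserted relative to the open teeth.

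For injectivity I would use that $\Fr[\underline{c}yc]{E}/\EuScript{J}$ is a cyclic hybrid whose elements, by the colimit/graph description of free operads (as in~\cite[II.1.9]{markl-shnider-stasheff:book}), are represented by trees with vertices decorated by $\mu,\omega,\phi$; the relations (a1)--(a4) let me put any such tree into a normal form determined by its biarity and the cyclic/closed structure of the output. I would show that two trees with the same image under $\Psi$ have the same normal form, so that the counting of elements in each component matches: since each nonempty component of $\onsOCKP$ is either a one-point set (in the closed-dominated cases, where $\oCom$ and $\oQC$ impose triviality) or carries exactly the cyclic data recorded by the multicycle, the normal form is rigid enough to recover the symbol~(\ref{zitra_na_houby}) uniquely.

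I expect the main obstacle to be the injectivity step, specifically proving that the four relations (a1)--(a4) suffice---that there are no hidden relations among words in $\mu,\omega,\phi$. The delicate point is the mixed open-closed regime governed by (a3) and (a4): one must confirm that the morphism property (a3) correctly identifies the two syntactically different ways of routing a closed pair of pants through a morphism and an open vertex, and that no further relation is forced by the absence of a contraction in the premodular/cyclic setting. My strategy here is to reduce every word to a canonical shape in which all $\omega$-vertices are collected into a single closed tree, all $\mu$-vertices into open trees, and the $\phi$-vertices form the unique interface layer, then to verify that (a1)--(a4) generate all moves between such canonical shapes; this combinatorial confluence check, rather than any deep structural input, is where the real work lies.
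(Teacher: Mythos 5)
Your overall skeleton matches the paper's: verify the relations hold in $\onsOCKP$, obtain the comparison morphism from the universal property of the free cyclic hybrid, and then exploit the fact that every biarity component of $\onsOCKP$ is either empty or a one-point set, so that it suffices to prove the same for $\Fr[\underline{c}yc]{E}/\,\EuScript{J}$ and match the nonempty components (this is exactly Lemma~\ref{vcera_v_Arse} of the paper). The genuine gap is in the step you yourself flag as ``the real work'': the canonical shape you propose --- all $\omega$-vertices collected into a single closed tree, the $\phi$-vertices forming an interface layer, the $\mu$-vertices in open trees --- is not a normal form, because relation (a3) equates words with \emph{different} numbers of $\omega$-vertices. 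Concretely, $\phi^{\cycs{p}}_{\stt g}\ooo{g}{f}\omega^{\varnothing}_{\stt{def}}$ and $\bigl(\mu^{\cycs{pqr}}_{\emptyset}\ooo{q}{s}\phi^{\cycs{s}}_{\stt d}\bigr)\ooo{r}{t}\phi^{\cycs{t}}_{\stt e}$ both satisfy your canonical-shape description (the first with one $\omega$ and one $\phi$ and no $\mu$, the second with no $\omega$, one $\mu$ and two $\phi$'s), yet they are equal modulo $\EuScript{J}$; so uniqueness of representatives fails and the injectivity argument collapses as stated.

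The paper resolves precisely this point by an asymmetric choice: whenever the graph contains at least one solid (open) half-edge, relation (a3) is applied left-to-right to eliminate \emph{all} $\omega$-vertices, the purely closed case being handled separately through the standard presentation of the cyclic commutative operad $\oComst$. The remaining structure is then rigidified by two devices absent from your proposal: centrality (a4) is upgraded to the statement that edges joining a $\mu$-vertex to a $\phi$-vertex escape the cyclic order (the ``fat vertex'' formalism), and (a1) together with (a3) yield a sliding rule, (\ref{sliding1})--(\ref{sliding2}), letting any closed leg migrate along the graph. Only with these moves can every word be brought to the linear chain form~(\ref{doutnik_odhalen}), after which a further commutativity argument shows the order of the boundary cycles is immaterial, and a four-case analysis (compared against the explicit list in Example~\ref{KP}) shows each admissible biarity carries exactly one class. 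Without an analogue of these devices, your ``combinatorial confluence check'' is not a routine verification but the entire content of the theorem, and the shape you propose to confluence toward cannot serve as its target.
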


Generators (g1)--(g3) will be depicted as
\begin{equation}
\raisebox{-3.2em}{\rule{0pt}{1pt}}
\label{eq:4}
\psscalebox{1 1}
{
\begin{pspicture}(0,0)(9.465,1.3425)
\psline[linecolor=black, linewidth=0.04](1.845,0.7825)(1.845,-0.0175)(1.045,-0.8175)(1.045,-0.8175)
\psline[linecolor=black, linewidth=0.04](1.845,-0.0175)(2.645,-0.8175)(2.645,-0.8175)
\psline[linecolor=black, linewidth=0.04, linestyle=dashed, dash=0.17638889cm 0.10583334cm](5.445,0.7825)(5.445,-0.0175)(4.645,-0.8175)(4.645,-0.8175)
\psline[linecolor=black, linewidth=0.04, linestyle=dashed, dash=0.17638889cm 0.10583334cm](5.445,-0.0175)(6.245,-0.8175)(6.245,-0.8175)
\psline[arrows=>, arrowscale=2 2,arrowinset=0.0, linecolor=black, linewidth=0.04, linestyle=dashed, dash=0.17638889cm 0.10583334cm](8.645,.2)(8.645,-0.8175)(8.645,-0.8175)
\psline[linecolor=black, linewidth=0.04](8.645,0.7825)(8.645,-0.0175)(8.645,-0.0175)
\psdots[linecolor=black, dotsize=0.2](1.845,-0.0175)
\psdots[linecolor=black, dotsize=0.2](5.445,-0.0175)
\rput(1.845,1.1825){$r$}
\rput(1.045,-1.2175){$p$}
\rput(5.445,1.1825){$f$}
\rput(4.645,-1.2175){$d$}
\rput(6.245,-1.2175){$e$}
\rput(8.645,1.1825){$p$}
\rput(8.645,-1.2175){$d$}
\rput(2.645,-1.2175){$q$}
\rput(0.245,-0.0175){$\mu$\ :}
\rput(3.845,-0.0175){$\omega$\ :}
\rput(7.845,-0.0175){$\phi$\ :}
\rput(2.645,-0.0175){,}
\rput(6.245,-0.0175){,}
\rput(9.445,-0.0175){.}
\end{pspicture}
}
\end{equation}
The pictorial forms of the associativities (a1) and (a2) are the
`fusion rules'
\begin{center}
\psscalebox{1.0 1.0} 
{
\begin{pspicture}(0,-1.325)(11.78,1.325)
\psline[linecolor=black, linewidth=0.04](0.09,0.8)(0.49,0.0)(0.09,-0.8)(0.09,-0.8)
\psline[linecolor=black, linewidth=0.04](0.49,0.0)(1.29,0.0)(1.69,0.8)(1.69,0.8)
\psline[linecolor=black, linewidth=0.04](1.29,0.0)(1.69,-0.8)(1.69,-0.8)
\psline[linecolor=black, linewidth=0.04](3.29,0.8)(4.09,0.4)(4.89,0.8)(4.89,0.8)
\psline[linecolor=black, linewidth=0.04](4.09,0.4)(4.09,-0.4)(3.29,-0.8)(3.29,-0.8)
\psline[linecolor=black, linewidth=0.04](4.09,-0.4)(4.89,-0.8)(4.89,-0.8)
\psline[linecolor=black, linewidth=0.04, linestyle=dashed, dash=0.17638889cm 0.10583334cm](6.89,0.8)(7.29,0.0)(8.09,0.0)(8.09,0.0)
\psline[linecolor=black, linewidth=0.04, linestyle=dashed, dash=0.17638889cm 0.10583334cm](8.49,0.8)(8.09,0.0)(8.09,0.0)
\psline[linecolor=black, linewidth=0.04, linestyle=dashed, dash=0.17638889cm 0.10583334cm](6.89,-0.8)(7.29,0.0)(7.29,0.0)
\psline[linecolor=black, linewidth=0.04, linestyle=dashed, dash=0.17638889cm 0.10583334cm](8.49,-0.8)(8.09,0.0)(8.09,0.0)
\psline[linecolor=black, linewidth=0.04, linestyle=dashed, dash=0.17638889cm 0.10583334cm](10.09,0.8)(10.89,0.4)(10.89,-0.4)(10.89,-0.4)
\psline[linecolor=black, linewidth=0.04, linestyle=dashed, dash=0.17638889cm 0.10583334cm](10.09,-0.8)(10.89,-0.4)(10.89,-0.4)
\psline[linecolor=black, linewidth=0.04, linestyle=dashed, dash=0.17638889cm 0.10583334cm](11.69,0.8)(10.89,0.4)(10.89,0.4)
\psline[linecolor=black, linewidth=0.04, linestyle=dashed, dash=0.17638889cm 0.10583334cm](11.69,-0.8)(10.89,-0.4)(10.89,-0.4)
\psdots[linecolor=black, dotsize=0.2](0.49,0.0)
\psdots[linecolor=black, dotsize=0.2](1.29,0.0)
\psdots[linecolor=black, dotsize=0.2](4.09,0.4)
\psdots[linecolor=black, dotsize=0.2](4.09,-0.4)
\psdots[linecolor=black, dotsize=0.2](7.29,0.0)
\psdots[linecolor=black, dotsize=0.2](7.29,0.0)
\psdots[linecolor=black, dotsize=0.2](8.09,0.0)
\psdots[linecolor=black, dotsize=0.2](10.89,0.4)
\psdots[linecolor=black, dotsize=0.2](10.89,-0.4)
\rput(0.09,1.2){\relax $p$}
\rput(1.69,1.2){\relax $u$}
\rput(0.09,-1.2){\relax $q$}
\rput(1.69,-1.2){\relax $t$}
\rput(3.29,1.2){\relax $p$}
\rput(4.89,1.2){\relax $u$}
\rput(3.29,-1.2){\relax $q$}
\rput(4.89,-1.2){\relax $t$}
\rput(2.49,0.0){$=$}
\rput(5.69,0.0){and}
\rput(6.89,1.2){\relax $d$}
\rput(8.49,1.2){\relax $i$}
\rput(6.89,-1.2){\relax $e$}
\rput(8.49,-1.2){\relax $h$}
\rput(10.09,1.2){\relax $d$}
\rput(11.69,1.2){\relax $i$}
\rput(10.09,-1.2){\relax $e$}
\rput(11.69,-1.2){\relax $d$}
\rput(9.29,0.0){$=$}
\end{pspicture}
}
\end{center}
while the morphism property (a3) and the centrality (a4) are depicted as
\begin{center}
\psscalebox{1.0 1.0} 
{
\begin{pspicture}(0,-1.7725)(12.91,1.7725)
\psline[linecolor=black, linewidth=0.04, linestyle=dashed, dash=0.17638889cm 0.10583334cm](0.09,-1.2475)(0.89,-0.4475)(0.89,-0.4475)
\psline[linecolor=black, linewidth=0.04, linestyle=dashed, dash=0.17638889cm 0.10583334cm](1.69,-1.2475)(0.89,-0.4475)(0.89,-0.4475)
\psline[linecolor=black, linewidth=0.04](0.89,1.1525)(0.89,0.3525)(0.89,0.3525)
\psline[linecolor=black, linewidth=0.04](4.09,0.3525)(3.29,-0.4475)(3.29,-0.4475)
\psline[linecolor=black, linewidth=0.04](4.89,-0.4475)(4.09,0.3525)(4.09,1.1525)(4.09,1.1525)
\psline[linecolor=black, linewidth=0.04, linestyle=dashed, dash=0.17638889cm 0.10583334cm](8.09,0.3525)(8.09,0.3525)
\psline[linecolor=black, linewidth=0.04, linestyle=dashed, dash=0.17638889cm 0.10583334cm](8.89,-0.4475)(8.09,0.3525)(8.09,0.3525)
\psline[linecolor=black, linewidth=0.04, linestyle=dashed, dash=0.17638889cm 0.10583334cm](8.09,1.1525)(8.09,0.3525)
\psline[linecolor=black, linewidth=0.04, linestyle=dashed, dash=0.17638889cm 0.10583334cm](10.49,-0.4475)(11.29,0.3525)(11.29,0.3525)
\psline[linecolor=black, linewidth=0.04, linestyle=dashed, dash=0.17638889cm 0.10583334cm](11.29,1.1525)(11.29,0.3525)
\psline[linecolor=black, linewidth=0.04](7.29,-0.4475)(7.29,-0.4475)(7.29,-1.2475)(7.29,-1.2475)
\psline[linecolor=black, linewidth=0.04](12.09,-0.4475)(12.09,-1.2475)(12.09,-1.2475)
\rput(0.89,1.5525){\relax $p$}
\rput(0.09,-1.6475){\relax $d$}
\rput(1.69,-1.6475){\relax $e$}
\rput(4.09,1.5525){\relax $p$}
\rput(3.29,-1.6475){\relax $d$}
\rput(4.89,-1.6475){\relax $e$}
\rput[bl](8.09,1.5525){\relax $p$}
\rput(7.29,-1.6475){\relax $d$}
\rput(8.89,-0.8475){\relax $r$}
\rput(10.49,-0.8475){\relax $r$}
\rput(11.29,1.5525){\relax $p$}
\rput(12.09,-1.6475){\relax $d$}
\psdots[linecolor=black, dotsize=0.2](0.89,-0.4475)
\psdots[linecolor=black, dotsize=0.2](4.09,0.3525)
\psdots[linecolor=black, dotsize=0.2](8.09,0.3525)
\psdots[linecolor=black, dotsize=0.2](11.29,0.3525)
\rput(2.49,-0.0475){=}
\rput(9.69,-0.0475){=}
\rput(6.09,-0.0475){and}
\rput(12.89,-0.0475){.}
\psline[linecolor=black, linewidth=0.04, linestyle=dashed, 
dash=0.17638889cm 0.10583334cm, arrowsize=0.05291666666666667cm 4.0,arrowlength=1.4,arrowinset=0.0]{->}(0.89,-0.4475)(0.89,0.3525)
\psline[linecolor=black, linewidth=0.04, linestyle=dashed, dash=0.17638889cm 0.10583334cm, arrowsize=0.05291666666666667cm 4.0,arrowlength=1.4,arrowinset=0.0]{->}(3.29,-1.2475)(3.29,-0.4475)
\psline[linecolor=black, linewidth=0.04, linestyle=dashed, dash=0.17638889cm 0.10583334cm, arrowsize=0.05291666666666667cm 4.0,arrowlength=1.4,arrowinset=0.0]{->}(4.89,-1.2475)(4.89,-0.4475)
\psline[linecolor=black, linewidth=0.04, linestyle=dashed, dash=0.17638889cm 0.10583334cm, arrowsize=0.05291666666666667cm 4.0,arrowlength=1.4,arrowinset=0.0]{->}(8.09,0.3525)(7.29,-0.4475)
\psline[linecolor=black, linewidth=0.04, linestyle=dashed, dash=0.17638889cm 0.10583334cm, arrowsize=0.05291666666666667cm 4.0,arrowlength=1.4,arrowinset=0.0]{->}(11.29,0.3525)(12.09,-0.4475)
\end{pspicture}
}
\end{center}

\begin{proof}[Proof of Theorem~\ref{THMPresentaionOfOC}]\
We define a morphism $\pi : \Fr[\underline{c}yc] E \to \onsOCKP$ of cyclic
hybrids by
\[
\pi\left(\mu^{\cycs{pqr}}_\emptyset\right) :=
\sur{\cycs{pqr}}{\zvedak 0}[\zvedak \emptyset],\
\pi\left(\omega^\varnothing_{\stt {d\hskip .08em ef}}\right) :=
\sur{\varnothing}{\zvedak 0}[\zvedak \stt {d\hskip .08em ef}]
\and
\pi\left(\phi^{\cycs{p}}_{\stt d}\right) 
:= \sur{\cycs{p}}{\zvedak 0}[\zvedak \stt d].
\]
Let us verify that $\pi$ descends to a morphism 
\begin{equation}
\label{podzim}
\alpha : \Fr[\underline{c}yc] E/\,
\EuScript{J} \to \onsOCKP
\end{equation}
of cyclic hybrids. The compatibility with~(a1) means,
	\begin{gather*}
	\sur{\cycs{pqr}}{\zvedak 0}[\zvedak \emptyset] \ooo{r}{s}
        \sur{\cycs{stu}}{\zvedak 0}[\zvedak \emptyset] 
= \sur{\cycs{pru}}{\zvedak 0}[\zvedak \emptyset] 
\ooo{r}{s} \sur{\cycs{qts}}{\zvedak 0}[\zvedak \emptyset],
\end{gather*}
the compatibility with (a2) leads to  
	\begin{gather*}
	\sur{\varnothing}{\zvedak 0}[\zvedak \stt {def}] \ooo{f}{g} 
\sur{\varnothing}{\zvedak 0}[\zvedak \stt {ghi}] =
        \sur{\varnothing}{\zvedak 0}[\zvedak \stt {dfi}] \ooo{f}{g} 
\sur{\varnothing}{\zvedak 0}[\zvedak \stt {ehg}],
\end{gather*}
the compatibility with (a3) amounts to verifying  	
\begin{gather*}
	\sur{\cycs{p}}{\zvedak 0}[\zvedak  \stt g] \ooo{g}{f} 
\sur{\varnothing}{\zvedak 0}[\zvedak \stt {def}] =
        \left( \sur{\cycs{pqr}}{0}[\zvedak \emptyset] \ooo{q}{s}
          \sur{\cycs{s}}{0}[\zvedak \stt d] \right) \ooo{r}{t}
        \sur{\cycs{t}}{0}[\zvedak \stt e]
      \end{gather*}
and, finally, the compatibility with (a4) translates to 
	\begin{gather*}
	\sur{\cyc{pqr}}{\zvedak 0}[\zvedak \emptyset] \ooo{q}{s}
        \sur{\cycs{s}}{\zvedak 0}[\zvedak \stt d]
        = \sur{\cyc{prq}}{\zvedak 0}[\zvedak \emptyset] \ooo{q}{s}
        \sur{\cycs{s}}{\zvedak 0}[\zvedak \stt d].
      \end{gather*}
All the above equations  follow directly from the
definition of the $\circ$-operations in~$\onsOC$.

We are going to prove
that~(\ref{podzim}) is an isomorphism. Let us start with a couple of
preliminary remarks.
Free operads and operad-like structures are represented by
decorated graphs, as explained at several places, see 
e.g.~\cite[Sections~6 and 9]{markl:handbook}, ~\cite[Section~4]{nsmod}. We
assume that the reader is familiar with this description. In our case,
elements of $\Fr[\underline{c}yc] E$ are connected, simply connected graphs
with three types of vertices as in~(\ref{eq:4}), and two types of
(half)-edges: solid ones representing `open' propagators, and dashed
ones representing `closed' propagators. Moreover, half-edges adjacent to
a vertex representing the open pair of pants  are cyclically ordered.

The associativities (a1) and (a2) enable one to contract propagators
connecting two $\mu$-vertices or two $\omega$-vertices. The result
will be a graph $\Gamma$ with vertices 
\begin{equation}
\label{za_chvili_sraz_s_Vladou}
\raisebox{-2.2em}{\rule{0pt}{1pt}}
\psscalebox{1.0 1.0} 
{
\begin{pspicture}(1,0)(6.7409053,0.76607317)
\rput(6,-0){and}
\rput(7.7,-0.1){,}
\rput(2.4,-0.1){,}
\psline[linecolor=black, linewidth=0.04](1.3194216,0.76602525)(1.3194216,-0.033974763)(1.9194217,0.56602526)(1.9194217,0.56602526)
\psline[linecolor=black, linewidth=0.04](1.3194216,-0.033974763)(2.1194217,-0.033974763)(2.1194217,-0.033974763)
\psline[linecolor=black, linewidth=0.04](1.3194216,-0.033974763)(0.7194217,0.56602526)(0.7194217,0.56602526)
\psline[linecolor=black, linewidth=0.04](1.3194216,-0.033974763)(0.5194217,-0.033974763)(0.5194217,-0.033974763)
\psline[linecolor=black, linewidth=0.04](1.3194216,-0.033974763)(0.7194217,-0.6339748)(0.7194217,-0.6339748)
\psline[linecolor=black, linewidth=0.04](1.3194216,-0.033974763)(1.9194217,-0.6339748)(1.9194217,-0.6339748)
\rput(1.3194216,-0.6339748){$\cdots$}
\psdots[linecolor=black, dotsize=0.2](1.3194216,-0.033974763)
\psline[linecolor=black, linewidth=0.04, linestyle=dashed, dash=0.17638889cm 0.10583334cm](4.319422,0.76602525)(4.319422,-0.033974763)(4.319422,-0.033974763)
\psline[linecolor=black, linewidth=0.04, linestyle=dashed, dash=0.17638889cm 0.10583334cm](4.9194217,0.56602526)(4.319422,-0.033974763)(4.319422,-0.033974763)
\psline[linecolor=black, linewidth=0.04, linestyle=dashed, dash=0.17638889cm 0.10583334cm](5.1194215,-0.033974763)(4.319422,-0.033974763)(4.319422,-0.033974763)
\psline[linecolor=black, linewidth=0.04, linestyle=dashed, dash=0.17638889cm 0.10583334cm](3.5194216,-0.033974763)(4.319422,-0.033974763)(4.319422,-0.033974763)
\psline[linecolor=black, linewidth=0.04, linestyle=dashed, dash=0.17638889cm 0.10583334cm](3.7194216,0.56602526)(4.319422,-0.033974763)(4.319422,-0.033974763)
\psline[linecolor=black, linewidth=0.04, linestyle=dashed, dash=0.17638889cm 0.10583334cm](3.7194216,-0.6339748)(4.319422,-0.033974763)(4.319422,-0.033974763)
\psline[linecolor=black, linewidth=0.04, linestyle=dashed, dash=0.17638889cm 0.10583334cm](4.9194217,-0.6339748)(4.319422,-0.033974763)(4.319422,-0.033974763)
\psdots[linecolor=black, dotsize=0.2](4.319422,-0.033974763)
\rput(4.319422,-0.6339748){$\cdots$}
\rput(.5,0){
\psline[linecolor=black, linewidth=0.04, linestyle=dashed, dash=0.17638889cm 0.10583334cm, arrowsize=0.05291666666666667cm 2.0,arrowlength=1.4,arrowinset=0.0]{->}(6.720906,-0.6339732)(6.7192097,0.16602501)
\psline[linecolor=black, linewidth=0.04](6.7179375,0.76602364)(6.719422,0.16602524)(6.7196336,-0.033974536)
\psline[linecolor=black, linewidth=0.04, arrowsize=0.133cm
2.0,arrowlength=1.4,arrowinset=0.0]{->}(6.719422,-0.033974763)(6.719422,0.16602524)
}
\psarc[linecolor=black, linewidth=0.02, dimen=outer, arrowsize=0.133cm 2.0,arrowlength=1.4,arrowinset=0.0]{->}(1.3194216,-0.033974763){1.2}{139.32887}{223.95837}
\end{pspicture}
}
\end{equation}
which represents an element in the quotient $\Fr[\underline{c}yc] E$ modulo the ideal
generated by (a1) and (a2). We will call the vertices
in~(\ref{za_chvili_sraz_s_Vladou}) the $\mu$-~, $\omega$- and
$\phi$-vertices, respectively. The half-edges adjacent to a $\mu$-vertex
are cyclically ordered. When drawn in the plane, we assume they have
the implicit anti-clockwise cyclic order.

The case when $\Gamma$ has only $\mu$-vertices 
is very special, $\Gamma$ then must be a corolla formed by an~$\omega$-vertex
whose all adjacent half-edges are legs\footnote{I.e., by definition, external
  half-edges.} labelled
by a finite set $C$. The equivalence class of $\Gamma$ in
$\Fr[\underline{c}yc]{E} /\, \EuScript{J}$ is then an element of
biarity $\varnothing \choose C$. 

So assume that $\Gamma$ has at least one $\omega$- or $\phi$-vertex, which
happens if and only if it has at least one solid  half-edge.
Cutting all its internal dashed edges in the middle produces $b$
connected graphs $\Gamma_1,\ldots,\Gamma_b$; the non-negative integer 
$b  \in \bbN_+$ can easily be seen to be  the
number of boundaries of the equivalence class of $\Gamma$ in
$\Fr[\underline{c}yc]{E} /\, \EuScript{J}$. The  open legs of
$\Gamma_i$ are cyclically ordered and their labels form 
for each $1 \leq i \leq b$ a cycle $\sfo_i$. Denoting
by $C$ the set of labels of closed legs,  the equivalence
class of $\Gamma$ in $\Fr[\underline{c}yc]{E} /\, \EuScript{J}$ has
biarity  $\sfO \choose C$ with $\sfO := \sfo_1 \cdots \sfo_b$.  

In both cases, we explicitly assigned to elements of $\Fr[\underline{c}yc]{E} /\,
\EuScript{J}$ a biarity $\sfO \choose C$ preserved by the
map $\alpha$ in~(\ref{podzim}). We denote by 
$\alpha{\sfO \choose C}$ the restriction of this map
to subsets of elements with the indicated biarity.

Let $\Fr[\underline{c}yc] \omega$
be the free cyclic operad generated by $\omega$, $\EuScript{A}$
the ideal generated by the associativity (a2), and $\oComst$  the
cyclic stable commutative operad from
Example~\ref{ze_ja_vul_jsem_zacal_psate_ten_grant_stable} identified
with the cyclic suboperad of $\onsOCKP$ consisting of elements as
in~(\ref{eq:6}). 
It is clear that $\alpha{\varnothing \choose C}$ 
can be identified with the morphism
\[
\Fr[\underline{c}yc] \omega /  \EuScript{A} \to \oComst
\]
that sends $\omega$ to the generator of $\oComst$. 
This map map is an isomorphism
since $\Fr[\underline{c}yc] \omega /  \EuScript{A}$ is the standard presentation
of the  cyclic  
commutative
operad~\cite[Example~II.3.33]{markl-shnider-stasheff:book}, 
so the $\sfO = \varnothing$ case of
Theorem~\ref{THMPresentaionOfOC} is proven. Therefore, from
now on we {\em assume that $\sfO \not= \varnothing$.}
 Notice that for each biarity $\sfO \choose C$
there is either precisely one element in $\onsOCKP$ of that biarity, or none.
To prove that $\alpha{\sfO \choose C}$ is an isomorphism, it is
therefore enough to establish

\begin{lemma}
\label{vcera_v_Arse}
Let us denote by $\left(\Fr[\underline{c}yc]{E} /\,
\EuScript{J}\right){\sfO \choose C}$ resp.\  
$\left(\onsOCKP\right){\sfO \choose C}$ the
subsets of elements of the indicated biarity. Then
\begin{itemize}
\item [(i)]
 $\left(\Fr[\underline{c}yc]{E} /\,
\EuScript{J}\right){\sfO \choose C}$ is either empty or a one-point
set and
\item [(ii)]
$\left(\onsOCKP\right){\sfO \choose C} \not= \emptyset$ implies that 
$\left(\Fr[\underline{c}yc]{E} /\,
\EuScript{J}\right){\sfO \choose C}\not= \emptyset$.
\end{itemize}
\end{lemma}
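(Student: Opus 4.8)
The plan is to prove both assertions by a normal-form analysis of the graphs representing elements of $\Fr[\underline{c}yc]{E}/\,\EuScript{J}$. Recall from the discussion preceding the lemma that, since $\sfO\not=\varnothing$, such an element is represented by a connected simply-connected graph built from $\mu$-, $\omega$- and $\phi$-vertices, which we may assume normalized via (a1) and (a2) so that no two $\mu$-vertices and no two $\omega$-vertices are adjacent; cutting the internal dashed edges then exhibits the $b$ boundary pieces carrying the cycles $\sfo_1,\dots,\sfo_b$, while the dashed part together with the closed legs $C$ records how these boundaries are tied together. The whole argument is driven by reading the defining relations as rewriting moves: the morphism relation (a3), applied from right to left, fuses two $\phi$-vertices sharing a $\mu$-vertex into a single $\phi$-vertex feeding an $\omega$-vertex, thereby pushing all closed multiplications across the $\phi$'s toward the closed legs and lowering the number of $\phi$-vertices; the centrality relation (a4) makes the cyclic position at which a $\phi$-vertex is grafted onto an open multiplication irrelevant; and (a1), (a2) canonicalize the open multiplication along each boundary and the commutative multiplication of the closed legs.

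For (i) I would argue by induction on the number of vertices of a normalized graph $\Gamma$ of biarity $\sfO \choose C$. Using the moves above I would reduce $\Gamma$ to a standard representative $\Gamma_{\mathrm{std}}(\sfO,C)$ in which each boundary $\sfo_i$ is a single open multiplication, joined through exactly the forced number of $\phi$-vertices to one commutative multiplication of $C$ (with the degenerate low-valence configurations treated separately: a bare dashed edge when only two closed ends meet, and an empty cycle produced by merging two singleton open boundaries along their open legs as in $\phi\ooo{p}{p'}\phi$). Since $\Gamma_{\mathrm{std}}(\sfO,C)$ depends only on $\sfO$ and $C$, any two normalized graphs of the same biarity become equal in $\Fr[\underline{c}yc]{E}/\,\EuScript{J}$, showing that $\left(\Fr[\underline{c}yc]{E}/\,\EuScript{J}\right){\sfO \choose C}$ is empty or a one-point set. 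The base cases $\sfO=\varnothing$ (already settled, giving $\oComst$) and the single-vertex graphs are immediate.

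For (ii) the task is purely existential: given a biarity $\sfO \choose C$ for which $\left(\onsOCKP\right){\sfO \choose C}\neq\emptyset$, I would exhibit an element of $\Fr[\underline{c}yc]{E}/\,\EuScript{J}$ of that biarity by assembling $\Gamma_{\mathrm{std}}(\sfO,C)$ directly from the generators. Since $\pi$ preserves biarity, it suffices to realize, using only the $\circ$-operations, an open multiplication for each cycle $\sfo_i$, each joined via a $\phi$ to a commutative multiplication of the closed legs $C$. The construction splits along the three cases of Example~\ref{KP} according to $b$; in particular empty open boundaries are produced as $\phi\ooo{p}{p'}\phi$, two singleton open boundaries merged along their open legs. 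Checking that exactly the biarities permitted by the lists of Example~\ref{KP} can be built this way is a finite case verification.

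The main obstacle is part (i): establishing that rewriting by (a1)--(a4) is confluent, i.e.\ that every normalized graph of a fixed biarity really reduces to the single representative $\Gamma_{\mathrm{std}}(\sfO,C)$ regardless of the order in which the moves are applied. The delicate points are the interaction of centrality (a4) with the tree structure when sliding and fusing $\phi$-vertices, and the handling of the degenerate low-valence configurations (empty open boundaries and bare closed edges); these are precisely the situations the Kaufmann-Penner stability of Example~\ref{KP} is designed to control, so its case analysis is what makes the normalization terminate and remain inside $\onsOCKP$.
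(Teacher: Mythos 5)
Your overall framework---representing elements of $\Fr[\underline{c}yc]{E}/\,\EuScript{J}$ by trees with $\mu$-, $\omega$- and $\phi$-vertices and rewriting them by (a1)--(a4) toward a normal form determined by the biarity---is the same kind of argument the paper makes, though you aim at a different normal form: you read (a3) from right to left so as to fuse $\phi$-vertices and concentrate all closed structure into a single commutative $\omega$-multiplication of $C$, whereas the paper applies (a3) in the opposite direction to eliminate every $\omega$-vertex, encodes each boundary as a fat vertex (using centrality (a4) to free the $(\mu,\phi)$-edges from the cyclic order), and then linearizes the whole tree into the caterpillar form~(\ref{doutnik_odhalen}).

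The genuine gap is that you never prove the statement that carries all the content of part (i): that every graph of a fixed biarity actually reduces to your $\Gamma_{\mathrm{std}}(\sfO,C)$, independently of the order in which the moves are applied. You flag this yourself as ``the main obstacle'' and then leave it unresolved; but this is precisely what the paper's proof consists of. Concretely, the paper establishes (1) the sliding rule---any half-edge attached to a fat vertex can be detached and re-attached to any other fat vertex, provided no forbidden vertex~(\ref{dnes_na_Janotu}) is created---by the explicit chain of applications of (a1) and (a3) in Figure~\ref{podhoransky_svah}; (2) the reduction of an arbitrary tree to the linear form~(\ref{doutnik_odhalen}); and (3) the fact that the order of the boundary cycles in that linear form is immaterial, via the computation (\ref{cistim_kolecko})--(\ref{za_14_dni_do_Sydney}) combining (a1), (a4) and (a2). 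Only after these three steps does the four-case analysis ($b'\geq 2$; $b'=1,\,b''\geq 1$; $b'=1,\,b''=0$; $b'=0$) yield the one-point statement (i), and comparison with the lists of Example~\ref{KP} then gives (ii); your plan for (ii) is fine but it presupposes (i). Two further warnings about your version of the missing step: your induction ``on the number of vertices'' has no well-founded measure, since (a3) changes the vertex count in opposite directions depending on the direction in which it is applied; and the empty-boundary configurations $\phi\ooo{p}{p'}\phi$ contribute \emph{two} dashed edges to the closed part of the tree rather than one, so in their presence ``one $\phi$ per boundary feeding one $\omega$-multiplication'' is not even a consistent description of the target normal form (both dashed ends attaching to the same $\omega$-tree would create a cycle, contradicting simple connectivity). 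So what you have is a plausible plan whose hard core---the uniqueness of the normal form---remains to be supplied.
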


Our strategy of the proof will be to modify the graph
$\Gamma$, bringing it in a `canonical' form~(\ref{doutnik_odhalen}),
and show that this form is uniquely determined by the biarity. Let us
start the process of modification of $\Gamma$.

Since $\Gamma$ has at least one solid (half)-edge, we may use the
morphism property (a3) to eliminate all its $\omega$-vertices. 
The only dashed internal edges will then be of the form
\begin{equation}
\label{musim_na_veceri_ale_nechce_se_mi}
\psscalebox{1.0 1.0} 
{

}
\end{center}
Backtracking the above modifications we convert the graph in the 
right hand side of
the above equality into~(\ref{za_14_dni_do_Sydney}).

We are finally ready to prove Lemma~\ref{vcera_v_Arse}.
As before, $b$ denotes the number of boundaries of $\sfO$
and we assume that $b \geq 1$. Let $\sfo_1\cdots \sfo_{b'}$ be all
nontrivial cycles in $\sfO$ so that  $\sfO 
= \sfo_1\cdots \sfo_{b'} \cycs{} \cdots
\cycs{}$ with $b'' := b-b'$ trivial cycles $\cycs{}$.
We distinguish four cases.

\noindent 
{\em Case $b' \geq 2$.} Using the
commutativity~(\ref{cistim_kolecko})--(\ref{za_14_dni_do_Sydney}) we
can rearrange~(\ref{doutnik_odhalen}) so that the labels of the fat
vertices read from the left to the right are
\[
\sfo_1, \cycs{}, \ldots , \cycs{}, \sfo_2, \ldots ,\sfo_{b'}.
\]
Since both $\sfo_1$ and $\sfo_{b'}$ are nontrivial,
the forbidden vertices~(\ref{dnes_na_Janotu}) cannot occur
so that, according to the sliding
rule~(\ref{sliding1})--(\ref{sliding2}), the positions of closed legs are
not constrained. In other words, for each biarity $\sfO \choose C$ with at
  least two nontrivial cycles in $\sfO$ there exists exactly one
  isomorphism class of graphs in  $\Fr[\underline{c}yc]{E} /\, \EuScript{J}$
  with that biarity. 

\noindent 
{\em Case $b' =1$, $b'' \geq 1$.} With the aid of
commutativity~(\ref{cistim_kolecko})--(\ref{za_14_dni_do_Sydney}) 
we order the fat vertices of~(\ref{doutnik_odhalen})  from the 
left to the right into
\[
\sfo_1, \cycs{}, \ldots , \cycs{}.
\]
To avoid the forbidden ones, 
the rightmost fat vertex must be adjacent to  at least one open leg,
which may happen only when 
$C \not= \emptyset$. All remaining open legs can be then, using the sliding
rule~(\ref{sliding1})--(\ref{sliding2}), 
transferred to the rightmost fat vertex, so their positions are
irrelevant. We conclude that if  $C \not=
\emptyset$,  $\Fr[\underline{c}yc]{E} /\, \EuScript{J}$ contains 
exactly one element of biarity  ${\sfO \choose C}$ while there are  no
elements of this biarity if $C =
\emptyset$.

\noindent 
{\em Case $b' =1$, $b'' =0$.}
The graph $\Gamma$ is a corolla around a fat vertex which is clearly an
allowed one if and only if the stability $2|C| + |O| > 2$ is satisfied.  

\noindent 
{\em Case $b' =0$.} 
Since $b = b' + b'' \not= 0$,  $b'' \geq 1$
and all fat vertices in~(\ref{doutnik_odhalen}) are labelled by the
trivial cycle $\cycs{}$. To avoid forbidden fat vertices at both
extremities, we need  $|C| \geq 2$ otherwise there will be no graphs
of biarity ${\sfO \choose C}$. 
If $|C| = 2$, there is precisely one open leg at both sides
of~(\ref{doutnik_odhalen}) and, due to the obvious left-right symmetry of the
graph, the labels of these legs can be interchanges. If $|C| \geq 3$,
the sliding rule applies so
the positions of open legs are irrelevant as well.

We see that in all four cases, (i) of Lemma~\ref{vcera_v_Arse} is
satisfied. The second part can be  verified easily by 
comparing the list of elements
belonging to $\onsOCKP$ given in Example~\ref{KP} with the above
calculations. This finishes the proof of the lemma and therefore also
of the theorem.
\end{proof}

Theorems~\ref{THMOCUnivPropKP} and~\ref{THMPresentaionOfOC} together give:

\begin{theorem} 
\label{THMPresentationOfQOCKP}
The modular hybrid $\oQnsOC_\KP$ has the following presentation.
It is generators are (g1)-(g3) of Theorem~\ref{THMPresentaionOfOC} 
and the relations are (a1)-(a4) of Theorem \ref{THMPresentaionOfOC},
together with the Cardy condition
\begin{equation}
\label{pokusim_se_Jarce_pripojit_modem}
\oxi{uv} \left( 
\omega^{\cycs{uqa}}_\emptyset \ooo{a}{b}\,
\omega^{\cycs{bvr}}_\emptyset \right) = \phi^{\cycs{q}}_{\stt c}
\ooo{c}{d} \phi^{\cycs{r}}_{\stt d}.
\end{equation}
\end{theorem}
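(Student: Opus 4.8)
The plan is to deduce Theorem~\ref{THMPresentationOfQOCKP} by composing the two presentations already at our disposal: the presentation of the \emph{cyclic} hybrid $\onsOCKP$ from Theorem~\ref{THMPresentaionOfOC} and the identification of $\oQnsOCKP$ with a quotient of the modular envelope from Theorem~\ref{THMOCUnivPropKP}. Concretely, Theorem~\ref{THMOCUnivPropKP} gives $\oQnsOCKP\cong\ModHybFun(\onsOCKP)/\EuScript{I}$, where $\EuScript{I}$ is generated by the single relation~\eqref{jsem_zmitan}, while Theorem~\ref{THMPresentaionOfOC} gives $\onsOCKP\cong\Fr[\underline{c}yc]{E}/\EuScript{J}$ with $E=\{\mu,\omega,\phi\}$ and $\EuScript{J}$ generated by (a1)--(a4). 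Substituting the second isomorphism into the first and pushing the modular envelope functor through the cyclic quotient is the whole content of the theorem.

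First I would invoke the fact, recorded in the commuting square of left adjoints in Example~\ref{DG_307_v_Akropoli}, that $\ModHybFun\circ\Fr[\underline{c}yc]{-}=\Fr[\underline{m}od]{-}$; that is, the modular hybrid envelope of a free cyclic hybrid is the corresponding free modular hybrid. Since $\ModHybFun:\CycHyb\to\ModHyb$ is a left adjoint it preserves all colimits, in particular the coequalizers presenting quotients by a congruence; hence
\[
\ModHybFun(\onsOCKP)\cong\ModHybFun\big(\Fr[\underline{c}yc]{E}/\EuScript{J}\big)\cong\Fr[\underline{m}od]{E}/\EuScript{J}',
\]
where $\EuScript{J}'$ denotes the modular hybrid congruence generated by the images of the relations (a1)--(a4). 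The point that makes this harmless is that each of (a1)--(a4) is an identity between two $\circ$-compositions of generators and involves no contractions, so it has literally the same shape as a relation in $\ModHyb$. Composing with the further quotient by $\EuScript{I}$ and using that iterated quotients by congruences amalgamate into the join, I obtain
\[
\oQnsOCKP\cong\Fr[\underline{m}od]{E}\big/\big(\EuScript{J}'\vee\EuScript{I}'\big),
\]
with $\EuScript{I}'$ the congruence on $\Fr[\underline{m}od]{E}$ whose image in $\ModHybFun(\onsOCKP)$ is $\EuScript{I}$.

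It then remains to identify $\EuScript{I}'$ with the congruence generated by the Cardy condition~\eqref{pokusim_se_Jarce_pripojit_modem}. For this I would extend the presenting morphism $\pi$ of Theorem~\ref{THMPresentaionOfOC} to the modular envelope and evaluate both sides of~\eqref{pokusim_se_Jarce_pripojit_modem} using the explicit structure operations of Proposition~\ref{THMModOCus}. On the right one merges the two $\phi$-generators along a closed input, and the closed $\ooo{c}{d}$-rule of Proposition~\ref{THMModOCus} produces $\sur{\sur{\cycs{q}}{0}\sur{\cycs{r}}{0}}{0}[\emptyset]$; on the left the two open pairs of pants merge via pancake merging into $\sur{\sur{\cycs{uqvr}}{0}}{0}[\emptyset]$, and the contraction $\oxi{uv}$ of two teeth lying in the \emph{same} pancake acts by pancake cutting without raising the genus, yielding $\sur{\sur{\cycs{q}\cycs{r}}{0}}{0}[\emptyset]$. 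Thus~\eqref{pokusim_se_Jarce_pripojit_modem} is carried precisely onto the generating relation~\eqref{jsem_zmitan} of $\EuScript{I}$ (read in reverse), which establishes $\EuScript{I}'=\langle\text{Cardy}\rangle$ and completes the argument.

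The main obstacle I anticipate is the middle step: making rigorous that applying the left adjoint $\ModHybFun$ to the quotient $\Fr[\underline{c}yc]{E}/\EuScript{J}$ returns exactly $\Fr[\underline{m}od]{E}/\EuScript{J}'$ rather than some a~priori larger quotient. Care is needed because a modular congruence generated by (a1)--(a4) is closed under contractions as well as $\circ$-operations, so one must check that no \emph{new} identifications are forced once contractions are allowed---equivalently, that the unit of the adjunction does not collapse distinct classes. I expect this to follow formally from the colimit-preservation of $\ModHybFun$ together with the observation that the defining relations carry no contractions, but it is the place where the bookkeeping is genuinely required; the evaluation of the Cardy condition in the preceding paragraph, by contrast, is a direct computation via Proposition~\ref{THMModOCus}.
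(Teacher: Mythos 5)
Your proposal is correct and takes essentially the same route as the paper's own (very terse) proof: the paper likewise chains $\ModHybFun(\onsOCKP)\cong\ModHybFun\big(\Fr[\underline{c}yc]{E}/\,\EuScript{J}\big)\cong\Fr[\underline{m}od]{E}/\,\EuScript{J}$ using the commuting diagram of left adjoints from Example~\ref{DG_307_v_Akropoli} together with the presentation~(\ref{pristi_tyden_vyjezdni_zasedani_Brno}), then combines this with Theorem~\ref{THMOCUnivPropKP} and finishes by observing that the isomorphism translates relation~(\ref{jsem_zmitan}) into the Cardy condition~(\ref{pokusim_se_Jarce_pripojit_modem}). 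The ``obstacle'' you flag is resolved exactly as you anticipate---$\ModHybFun$ preserves the coequalizers presenting quotients, and since (a1)--(a4) contain no contractions they generate the corresponding congruence on the modular side---so no additional bookkeeping is required.
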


\begin{proof}
It follows from the commutativity of diagrams in
Example~\ref{DG_307_v_Akropoli} combined 
with~(\ref{pristi_tyden_vyjezdni_zasedani_Brno}) that
\begin{equation}
\label{eq:11}
\ModHybFun(\onsOCKP) \ \cong\
\ModHybFun\big(\Fr[\underline{c}yc]{E} /\, \EuScript{J}\big)
\cong  \ModHybFun\big(\Fr[\underline{c}yc]{E}\big)   /\, \EuScript{J}
\cong
 \Fr[\underline{m}od]{E} /\, \EuScript{J}, 
\end{equation}
where $ \Fr[\underline{m}od]-$ is the free modular hybrid functor, and the
collection $E$ and the ideal $\EuScript{J}$ have the same generators as
in Theorem~\ref{THMPresentaionOfOC}.

Theorem~\ref{THMOCUnivPropKP} combined with~(\ref{eq:11}) implies
that the modular hybrid $\oQnsOCKP$ is isomorphic to the quotient of
$\Fr[\underline{m}od]{E}$ by $\EuScript{J}$
and relation~(\ref{jsem_zmitan}). The proof is finished by observing that the
isomorphisms~(\ref{eq:11}) translates~(\ref{jsem_zmitan})
into~(\ref{pokusim_se_Jarce_pripojit_modem}). 
\end{proof}

In Example~\ref{dnes_vecer_s_Jarkou_a_Hankou_k_Pakousum} 
we defined algebras over cyclic
hybrids. The finitary presentation of  $\onsOCKP$ given in
Theorem~\ref{THMPresentaionOfOC} offers an explicit
description of its algebras. Recall that a {\em Frobenius algebra\/}
on a vector space $A$ equipped with a non-degenerate symmetric bilinear form
$\beta_A$ has an associative multiplication $\mu_A : A \ot A \to A$
such that the expression
\begin{equation}
\label{pozitri_do_Brna}
\beta_A \big(\mu_A(a_1,a_2),a_3\big) \in \bfk
\end{equation}
is cyclically invariant in $a_1,a_2,a_3\in A$. A Frobenius algebra is
{\em commutative\/} if~(\ref{pozitri_do_Brna}) is invariant under all permutations
of  $a_1,a_2$ and $a_3$; this forces $\mu_A$ to be commutative.

\begin{theorem} 
\label{THMOCKPAlgebras}
An algebra over the Kaufmann-Penner 
cyclic hybrid $\onsOC_\KP$ on a pair $A,B$ of finite
dimensional vector spaces equipped with symmetric non-degenerate bilinear 
forms $\beta_A,\beta_B$ is the same as
\begin{enumerate}
\item[(i)] 
a Frobenius algebra on $A$ with the associated form $\beta_A$,
\item[(ii)] 
a commutative Frobenius algebra on $B$ with the associated form
$\beta_B$, and
\item[(iii)] 
an associative algebra morphism $B\to A$ with values in the center of $A$.
\end{enumerate}
\end{theorem}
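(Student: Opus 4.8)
The plan is to read the algebras straight off the finitary presentation of $\onsOCKP$ furnished by Theorem~\ref{THMPresentaionOfOC}. By Example~\ref{dnes_vecer_s_Jarkou_a_Hankou_k_Pakousum} an algebra is a morphism $\alpha:\onsOCKP\to\End_{A,B}$ of cyclic hybrids, and by the isomorphism $\onsOCKP\cong\Fr[\underline{c}yc]{E}/\,\EuScript{J}$ together with the freeness of $\Fr[\underline{c}yc]{E}$, such an $\alpha$ amounts to a morphism of hybrid collections $E\to\End_{A,B}$ that is equivariant for the automorphism actions on $\mu,\omega,\phi$ and that sends each relation (a1)--(a4) to an identity in $\End_{A,B}$. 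Thus the entire content of the theorem is a dictionary translating the three generators into algebraic operations and the four relations into the stated axioms.

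First I would unwind $\End_{A,B}$ on the relevant biarities using~(\ref{dnes_vecer_s_Nunykem_do_Matu}): $\alpha(\mu)$ is a cyclically invariant functional on $A\otimes A\otimes A$, $\alpha(\omega)$ a fully symmetric functional on $B\otimes B\otimes B$, and $\alpha(\phi)$ a pairing $A\otimes B\to\bfk$. Lowering indices against the non-degenerate forms, I would define $\mu_A:A\otimes A\to A$ by $\beta_A(\mu_A(a,b),c)=\alpha(\mu)(a\otimes b\otimes c)$, a multiplication $\mu_B$ on $B$ analogously, and $\phi:B\to A$ by $\beta_A(\phi(b),a)=\alpha(\phi)(a\otimes b)$. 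The prescribed automorphism actions already encode part of the structure: invariance of $\alpha(\mu)$ under the cyclic group is exactly the cyclic invariance~(\ref{pozitri_do_Brna}), while the full symmetry of $\alpha(\omega)$ yields both the Frobenius cyclicity and the commutativity of $\mu_B$.

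Next I would feed the four relations through the definition of the $\circ$-operations in $\End_{A,B}$, which contract the two glued slots against the appropriate non-degenerate form. Relation (a1) becomes associativity of $\mu_A$, so with the cyclicity above $A$ is a Frobenius algebra; relation (a2) becomes associativity of $\mu_B$, making $B$ a commutative Frobenius algebra; relation (a3) becomes $\phi(\mu_B(b_1,b_2))=\mu_A(\phi(b_1),\phi(b_2))$, i.e.\ $\phi$ is an algebra morphism; and relation (a4) becomes $\mu_A(\phi(b),a)=\mu_A(a,\phi(b))$ for all $a\in A$, $b\in B$, i.e.\ $\phi$ has central image. This yields the data (i)--(iii) from an algebra $\alpha$. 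For the converse I would run the same dictionary backwards, defining $\alpha$ on the generators by the displayed formulas; the verifications just performed show that $\alpha$ respects (a1)--(a4) and hence descends to $\Fr[\underline{c}yc]{E}/\,\EuScript{J}\cong\onsOCKP$. The two assignments are mutually inverse by construction.

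The main obstacle I anticipate is purely bookkeeping but genuinely delicate: aligning the combinatorial $\circ$-operations with the raising and lowering of indices so that each relation lands on the intended algebraic identity with the correct order of arguments. In particular one must check that the shared-input contraction in $\End_{A,B}$ reproduces $\mu_A(\mu_A(a,b),c)$ on the nose rather than a transpose, that the chosen direction of the pairing makes (a3) read as a morphism of algebras and not of their duals, and that (a4), which compares $\mu^{\cycs{pqr}}_\emptyset$ with $\mu^{\cycs{prq}}_\emptyset$, yields genuine left/right commutation with $\phi(b)$ rather than a weaker condition. Since the presentation carries no unary generator, no counit or unitality conditions arise, matching the unit-free notion of Frobenius algebra used in~(\ref{pozitri_do_Brna}).
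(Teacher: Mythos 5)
Your proposal is correct and follows essentially the same route as the paper's proof: reading off generators $\mu,\omega,\phi$ from the presentation of Theorem~\ref{THMPresentaionOfOC}, lowering indices against $\beta_A,\beta_B$ to produce $\mu_A$, $\omega_B$ and $f:B\to A$, and translating (a1)--(a4) into associativity, associativity, the morphism property, and centrality, with the equivariance of the generators supplying the cyclic and full symmetry. If anything you are slightly more explicit than the paper, which tacitly identifies the functional $\alpha(\phi)\in\Lin(A\ot B,\bfk)$ with a map $B\to A$ and leaves the inverse dictionary implicit in its "is the same as" formulation.
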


\begin{proof}
By definition, an  $\onsOC_\KP$-algebra is a morphism of cyclic hybrids 
$\alpha : \oOC_\KP \to \End_{A,B}$. Let $\mu,\omega$ and $\phi$ be the
generators of $\onsOC_\KP$ as in (g1)--(g3) of
Theorem~\ref{THMPresentaionOfOC}. Since the bilinear forms $\beta_A$
and $\beta_B$ are non-degenerate, the equations
\begin{equation}
\label{vcera_v_Arse_na_UH_a_Chadimovi}
\beta_A \big(\mu_A(a_1,a_2),a_3\big) = \alpha(\mu)(a_1 \ot a_2 \ot a_3),\
\beta_B \big(\omega_B(c_1,c_2),c_3\big)  
= \alpha(\omega)(c_1 \ot c_2 \ot c_3),\
\end{equation}
$a_i \in A$, $c_i \in B$, $i = 1,2,3$,
define bilinear maps $\mu_A : A \ot A \to A$ and  $\omega_B : B \ot B
\to B$ while $f := \alpha(\phi)$ is a linear map $B \to A$. 

It is easy to show that (a1) of Theorem~\ref{THMPresentaionOfOC}
translates to the associativity of $\mu_A$ and (a2) to 
the associativity of $\omega_B$. The symmetry, i.e.\ the commutativity
of $\omega_B$, follows from the invariance of $\omega$ under the group of
automorphisms of its inputs. Likewise, the morphism property~(a3)
implies that $f: B\to A$ is an algebra morphism while the
centrality~(a4) implies (iii) of the theorem. Finally, the symmetry
of the expressions~(\ref{pozitri_do_Brna}) 
for $\mu_A$ resp.\ $\omega_B$ follows from the
defining equations~(\ref{vcera_v_Arse_na_UH_a_Chadimovi}) 
and the cyclic symmetry of $\mu$ resp.~$\omega$. 
\end{proof}

Theorem~\ref{THMPresentationOfQOCKP} offers the following description of
algebras for the modular hybrid $\oQnsOCKP$ in the spirit of 
the classical result about 2-dimensional topological 
field theories~\cite{Kock:TFT}, see 
also~\cite[Theorem~5.4]{kaufmann-penner:NP06} and~\cite[Section~4]{lauda}.

\begin{theorem}
\label{omylem_jsem_si_koupil_bio_banany}
An algebra for the \KaPe\ modular hybrid $\oQnsOC_\KP$ on a pair 
$A,B$ of vector spaces with symmetric nondegenerate bilinear forms
$\beta_A,\beta_B$ is the same as
\begin{enumerate}
\item[(i)] 
a Frobenius algebra $(A,\mu_A,\beta_A)$,
\item[(ii)] 
a commutative Frobenius algebra $(B,\omega_B,\beta_B)$, and
\item[(iii)] 
an associative algebra morphism $f: B\to A$ with values in the center
of $A$,
\end{enumerate}
satisfying the Cardy condition
\begin{equation}
\label{v_patek_vzpominka_na_Martina}
\beta_A(\mu_A\ot\mu_A)(\id\ot\tau\ot\id)(\id\ot\id\ot\beta_A^{-1}) =
(\beta_A\ot\beta_A)
\big(\id\ot (f \ot f)\beta_B^{-1} \ot\id\big),
\end{equation}
where $\tau$ is the standard symmetry in the monoidal category of
graded vector spaces.
\end{theorem}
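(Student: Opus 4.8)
The plan is to read the structure straight off the finitary presentation of $\oQnsOC_\KP$ furnished by Theorem~\ref{THMPresentationOfQOCKP}. By Example~\ref{dnes_vecer_s_Jarkou_a_Hankou_k_Pakousum}, an algebra is a morphism of modular hybrids $\alpha:\oQnsOC_\KP\to\End_{A,B}$, and since $\oQnsOC_\KP$ is presented by the generators $\mu,\omega,\phi$ subject to the cyclic relations (a1)--(a4) of Theorem~\ref{THMPresentaionOfOC} and the single Cardy relation~(\ref{pokusim_se_Jarce_pripojit_modem}), giving $\alpha$ amounts to giving three tensors $\alpha(\mu),\alpha(\omega),\alpha(\phi)$ compatible with all of these. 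First I would invoke Theorem~\ref{THMOCKPAlgebras}: a morphism of modular hybrids restricts to one of the underlying cyclic hybrids, the relations (a1)--(a4) already live at the cyclic level, and so the data $\alpha(\mu),\alpha(\omega),\alpha(\phi)$ satisfying them is exactly (i)--(iii), where $\mu_A,\omega_B$ are recovered from $\beta_A,\beta_B$ by~(\ref{vcera_v_Arse_na_UH_a_Chadimovi}) and $\alpha(\phi)$ corresponds, via nondegeneracy of $\beta_A$, to the central algebra morphism $f:B\to A$. Consequently the entire content beyond Theorem~\ref{THMOCKPAlgebras} is that $\alpha$ must in addition respect the extra relation~(\ref{pokusim_se_Jarce_pripojit_modem}).

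The heart of the proof is therefore to show that $\alpha$ sends~(\ref{pokusim_se_Jarce_pripojit_modem}) to precisely~(\ref{v_patek_vzpominka_na_Martina}). For this I would use the explicit structure operations of the endomorphism hybrid $\End_{A,B}$: the $\circ$-operation $\ooo{a}{b}$ glues two multilinear forms by inserting the copairing $\beta_A^{-1}$ (resp.\ $\beta_B^{-1}$) into the contracted open (resp.\ closed) slots, while an open contraction $\oxi{uv}$ inserts a further copy of $\beta_A^{-1}$ into the slots $u,v$. Applying $\alpha$ to the left hand side of~(\ref{pokusim_se_Jarce_pripojit_modem}), each open pair of pants becomes the form $a_1\ot a_2\ot a_3\mapsto\beta_A(\mu_A(a_1,a_2),a_3)$; the composition $\ooo{a}{b}$ raises one index pair along $\beta_A^{-1}$ and the contraction $\oxi{uv}$ raises the remaining pair, and unwinding the resulting contraction yields $\beta_A(\mu_A\ot\mu_A)(\id\ot\tau\ot\id)(\id\ot\id\ot\beta_A^{-1})$, with the transposition $\tau$ forced by the anticlockwise cyclic order on the inputs of $\mu$. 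On the right hand side $\alpha(\phi)=\beta_A(-,f(-))$ and the closed composition $\ooo{c}{d}$ glues along $\beta_B^{-1}$, producing $(\beta_A\ot\beta_A)\big(\id\ot(f\ot f)\beta_B^{-1}\ot\id\big)$; equating the two gives exactly~(\ref{v_patek_vzpominka_na_Martina}).

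The main obstacle is purely a matter of bookkeeping, but a genuinely delicate one: one must match the combinatorial labels $u,v,q,r,a,b,c,d$ of~(\ref{pokusim_se_Jarce_pripojit_modem}) against the tensor slots, track which copairing ($\beta_A^{-1}$ or $\beta_B^{-1}$) each operation inserts, and check that the cyclic orders on the two $\mu$-vertices produce the transposition $\tau$ in the stated middle position rather than some other permutation. I would organise this by fixing $\beta_A$- and $\beta_B$-dual bases, writing both sides of~(\ref{pokusim_se_Jarce_pripojit_modem}) as explicit sums over these bases, and recognising the resulting scalar-valued bilinear forms on $A$ as the two sides of~(\ref{v_patek_vzpominka_na_Martina}). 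Once this identification is in place the theorem follows at once: a datum (i)--(iii) extends to a morphism out of $\oQnsOC_\KP$ if and only if it annihilates the Cardy relation, i.e.\ if and only if~(\ref{v_patek_vzpominka_na_Martina}) holds, which is exactly the asserted equivalence.
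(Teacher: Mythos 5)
Your proposal is correct and follows essentially the same route as the paper: the paper's proof simply says it "follows the pattern of the proof of Theorem~\ref{THMOCKPAlgebras}," i.e.\ read off (i)--(iii) from the generators $\mu,\omega,\phi$ and relations (a1)--(a4) of the presentation in Theorem~\ref{THMPresentationOfQOCKP}, and then check that $\alpha$ preserving the one additional relation~(\ref{pokusim_se_Jarce_pripojit_modem}) is, via the copairing-insertion structure of $\End_{A,B}$, exactly condition~(\ref{v_patek_vzpominka_na_Martina}). Your dual-basis bookkeeping for the contraction and the $\circ$-operations is precisely the detail the paper leaves to the reader, so nothing is missing.
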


In~(\ref{v_patek_vzpominka_na_Martina}), $\beta_A^{-1}$ is the
inverse  of $\beta_A : A \ot A \to \bfk$, i.e.\  the unique linear map
$\beta_A^{-1} :\bfk \to  A \ot A$ satisfying
\[
(\beta_A \ot \id)(\id \ot \beta_A^{-1}) = (\id \ot
\beta_A)(\beta_A^{-1} \ot \id) = \id_A; 
\]
the inverse  $\beta_B^{-1} :\bfk \to  B \ot B$ is defined similarly.

\begin{proof}[Proof of Theorem~\ref{omylem_jsem_si_koupil_bio_banany}]
It follows
the pattern of the proof of Theorem~\ref{THMOCKPAlgebras} and we leave
the details to the reader.
A pictorial form of the Cardy
condition~(\ref{v_patek_vzpominka_na_Martina}) is
\begin{center}
\raisebox{-4em}{}
\psscalebox{1.0 1.0}
{
\begin{pspicture}(0,.4)(7.651398,1.690202)
\psarc[linecolor=black, linewidth=0.04, dimen=outer](1.231398,0.330202){0.8}{359}{180}
\psarc[linecolor=black, linewidth=0.04, dimen=outer](1.031398,-0.46979797){1.0}{126.47182}{182.05298}
\psarc[linecolor=black, linewidth=0.04, dimen=outer](2.631398,-0.46979797){1.0}{126.47182}{182.05298}
\psarc[linecolor=black, linewidth=0.04, dimen=outer](-0.16860199,-0.46979797){1.0}{0.19398086}{53.948467}
\psarc[linecolor=black, linewidth=0.04, dimen=outer](1.431398,-0.46979797){1.0}{0.19398086}{53.948467}
\psarc[linecolor=black, linewidth=0.04, dimen=outer](1.631398,-0.46979797){0.8}{174.4278}{359.9}
\psline[linecolor=black, linewidth=0.04](1.631398,-0.46979797)(1.631398,-1.069798)
\psarc[linecolor=black, linewidth=0.04, dimen=outer](4.631398,0.53020203){0.6}{1.8445666}{179.10997}
\psarc[linecolor=black, linewidth=0.04, dimen=outer](7.031398,0.53020203){0.6}{1.8445666}{179.10997}
\psarc[linecolor=black, linewidth=0.04, linestyle=dashed, dash=0.17638889cm 0.10583334cm, dimen=outer](5.831398,-1.069798){0.6}{187}{0}
\psline[linecolor=black, linewidth=0.04, linestyle=dashed, dash=0.17638889cm 0.10583334cm, arrowsize=0.05291666666666667cm 2.0,arrowlength=1.4,arrowinset=0.0]{->}(5.231398,-1.069798)(5.231398,-0.06979797)
\psline[linecolor=black, linewidth=0.04, linestyle=dashed, dash=0.17638889cm 0.10583334cm, arrowsize=0.05291666666666667cm 2.0,arrowlength=1.4,arrowinset=0.0]{->}(6.431398,-1.069798)(6.431398,-0.06979797)
\psdots[linecolor=black, dotsize=0.2](0.431398,0.330202)
\psdots[linecolor=black, dotsize=0.2](2.031398,0.330202)
\rput(1.231398,1.530202){$\beta_A$}
\rput(4.631398,1.530202){$\beta_A$}
\rput(7.031398,1.530202){$\beta_A$}
\rput(2.431398,0.53020203){$\mu$}
\rput(0.831398,0.53020203){$\mu$}
\rput(6.831398,-0.46979797){$\phi$}
\rput(5.631398,-0.46979797){$\phi$}
\rput(5.831398,-1.2697979){$\beta_B$}
\rput(3.3,-0.26979798){$=$}
\rput(8,-0.26979798){.}
\psline[linecolor=black, linewidth=0.04](1.631398,-1.469798)(1.631398,-1.669798)
\psline[linecolor=black, linewidth=0.04](0.03139801,-0.46979797)(0.03139801,-1.669798)
\psline[linecolor=black, linewidth=0.04](4.031398,0.53020203)(4.031398,-1.669798)
\psline[linecolor=black, linewidth=0.04](7.631398,0.53020203)(7.631398,-1.669798)
\psline[linecolor=black, linewidth=0.04](5.231398,0.53020203)(5.231398,-0.26979798)
\psline[linecolor=black, linewidth=0.04](6.431398,0.53020203)(6.431398,-0.26979798)
\end{pspicture}
}  
\end{center}
\end{proof}

\def\cprime{$'$}\def\cprime{$'$}

\end{document}